\newtheorem{theorem}{Theorem}[section]
\newtheorem{proposition}[theorem]{{\bf Proposition}}
\newtheorem{cor}[theorem]{{\bf Corollary}}
\newtheorem{rem}[theorem]{{\bf Remark}}
\newtheorem{definition}{Definition}[section]
\numberwithin{equation}{section}
\newenvironment{proof}{\indent{\em Proof:}}{\quad \hfill
$\Box$\vspace*{2ex}}
\font\Bbb=msbm10 at 12pt
\newcommand{\R}{\mbox{\Bbb R}}
\newcommand{\N}{\mbox{\Bbb N}}
\begin{document}
\title{On tempered fractional calculus with respect to functions and the associated fractional differential equations}

\author[1]{Ashwini D. Mali\thanks{Email: \texttt{maliashwini144@gmail.com}}}
\author[1]{Kishor D. Kucche\thanks{Email: \texttt{kdkucche@gmail.com}}}
\author[2]{Arran Fernandez\thanks{Email: \texttt{arran.fernandez@emu.edu.tr}}}
\author[2]{Hafiz Muhammad Fahad\thanks{Email: \texttt{hafizmuhammadfahad13@gmail.com}}}

\affil[1]{{\small Department of Mathematics, Shivaji University, Kolhapur-416 004, Maharashtra, India}}
\affil[2]{{\small Department of Mathematics, Eastern Mediterranean University, Famagusta, Northern Cyprus, via Mersin 10, Turkey}}

\maketitle

\def\baselinestretch{1.0}\small\normalsize

\begin{abstract}
The prime aim of the present paper is to continue developing the theory of tempered fractional integrals and derivatives of a function with respect to another function. This theory combines the tempered fractional calculus with the $\Psi$-fractional calculus, both of which have found applications in topics including continuous time random walks. After studying the basic theory of the $\Psi$-tempered operators, we prove mean value theorems and Taylor's theorems for both Riemann--Liouville type and Caputo type cases of these operators. Furthermore, we study some nonlinear fractional differential equations involving $\Psi$-tempered derivatives, proving existence-uniqueness theorems by using the Banach contraction principle, and proving stability results by using Gr\"onwall type inequalities.
\end{abstract}

\noindent\textbf{Key words:} Fractional integrals; Fractional derivatives; Tempered fractional calculus; Fractional calculus with respect to functions; Fractional differential equations; Fixed point theory; Gr\"onwall's inequality; Ulam type stability. \\
\noindent\\
\textbf{2020 Mathematics Subject Classification:} 26A33, 34A12, 34A08.
\def\baselinestretch{1.5}
\allowdisplaybreaks

\section{Introduction}

The field of fractional calculus covers the study of integro-differential operators of fractional order and their applications in differential equations of various types and in the modelling of processes in physics, biology, etc. For broad overviews of these topics, we refer the reader to the well-known textbooks and survey articles such as \cite{Samko,Kilbas,Diethelm,Podlubny,Hilfer,sun-etal}.

Especially in recent years, fractional calculus has been developed to include a wide number of fractional integral and derivative operators. Generally, a new fractional derivative is proposed with one of two aims: either to model some physical processes for which the existing fractional derivatives are inadequate, or to capture some mathematical properties which the existing fractional derivatives lack. In terms of mathematical properties, the many different types of operators can generally be gathered into a few broad classes \cite{baleanu-fernandez}. Some of the important classes nowadays include the class of fractional integrals with analytic kernel functions and the associated fractional derivatives \cite{fernandez-ozarslan-baleanu}, the class of fractional integrals and derivatives with respect to a monotonic function \cite{Osler,Almeida}, and the class of fractional integrals and derivatives weighted by a multiplicative factor \cite{agrawal,ff:weighted}.

One type of fractional calculus of particular interest is the so-called tempered fractional calculus. Mathematically, it is of interest as the unique intersection between weighted fractional calculus and fractional calculus with analytic kernels \cite[Theorem 3.1]{ff:weighted}. In modelling, it has been used to understand turbulence in geophysical flows \cite{Meerschaert1}, and L\'evy processes such as Brownian motion \cite{Meerschaert2,Baeumera}. Tempered fractional calculus is so important that it has been rediscovered at least twice under different names, as generalised proportional fractional calculus \cite{jarad-abdeljawad-alzabut} and as substantial fractional calculus \cite{friedrich-jenko,Cao}.

We summarise as follows some of the important pure mathematical studies of tempered fractional calculus and the associated differential equations. Li et al. \cite{Li} examined various properties of tempered fractional derivatives and explored the existence, uniqueness, and stability of some tempered fractional ordinary differential equations. Zhao et al. \cite{L Zhao} demonstrated several properties involving function spaces and compositions, before considering variational calculus and spectral analysis for Riemann--Liouville-type tempered fractional differential equations. Morgado and Rebelo \cite{Morgado} considered nonlinear Caputo-type tempered fractional differential equations with a terminal condition, analysed existence and uniqueness properties of the solution, and proposed three numerical techniques to approximate solutions of the considered problems. Fernandez and Ustao\u{g}lu \cite{Fernandez} proved an analogue of Taylor's theorem for tempered fractional derivatives, and some integral inequalities, and utilised tempered operators to acquire some special functions such as hypergeometric and Appell's functions. Zaky \cite{Zaky} investigated the existence, uniqueness, and structural stability of solutions to nonlinear Caputo-type tempered fractional differential equations with generalised boundary conditions, and developed a spectral collocation technique for numerical solutions of the considered equations, including detailed convergence and error analysis.

The calculus of fractional integration and differentiation of a function with respect to another function, sometimes called $\Psi$-fractional calculus following the notation of Almeida \cite{Almeida}, is one of the three broad general classes of operators mentioned above. After the initial genesis of this idea in the work of Osler \cite{Osler} and Erd\'elyi \cite{Erdelyi}, it was further developed in the standard textbooks of Samko et al. \cite[\S18.2]{Samko} and Kilbas et al. \cite[\S2.5]{Kilbas}, all operators there being taken in the sense of Riemann--Liouville. The Caputo version of this established definition was formalised and studied by Almeida \cite{Almeida}, and the Hilfer version by Sousa and Oliveira \cite{Vanterler1}; the latter has been greatly promoted in the study of fractional differential equations \cite{kucche-mali-sousa,mali-kucche-sousa}. The broad class of $\Psi$-fractional operators includes, as special cases, the operators of Hadamard and Erd\'elyi, which had already excited interest before the formulation of the general class \cite{agrawal}.

It is interesting to note that both tempered fractional calculus and $\Psi$-fractional calculus have been found useful in the study of continuous time random walks. Several research investigations \cite{Sokolov,Schmidt,Cartea,Henry} have utilised tempered fractional integrals and derivatives without referring to them as such: for example, the transport operator $\mathscr{T}_t$ in \cite[Eq. (19)]{Sokolov} and \cite[Eq. (17)]{Schmidt}, used in studying reactions under anomalous diffusion of subdiffusive type, can clearly be identified as a tempered fractional derivative, while similar operators are also seen in \cite[Section III.A]{Cartea} and \cite[Section III.B]{Henry} in the analysis of continuous time random walks. On the other hand, the operators of fractional calculus with respect to an arbitrary monotonic function have also been used very recently \cite{wu1,wu2} in studying continuous time random walks. Thus, it is suggested that combining both ideas, tempered fractional calculus and $\Psi$-fractional calculus, may be a useful endeavour.

A recent paper of Fahad et al. \cite{Fahad} investigated tempered and Hadamard-type fractional calculi together, and the generalization of both which is given by taking the operators with respect to an arbitrary monotonic function. Such a generalisation can be thought of as $\Psi$-tempered fractional calculus, and it is a special case both of fractional calculus with analytic kernels with respect to functions \cite{Oumarou} and of weighted fractional calculus with respect to functions \cite{agrawal,ff:weighted}. The paper \cite{Fahad} established conjugation and limiting properties, function spaces and boundedness, and an integration by parts property, all in the context of $\Psi$-tempered fractional calculus (equivalently, Hadamard-type fractional calculus with respect to a function).

Our current work can be seen as the continuation of \cite{Fahad}, conducting a further investigation into $\Psi$-tempered fractional calculus and its properties. The structure of this paper is as follows. In Section \ref{preliminaries}, we recall definitions and basic results on the operators to be used, mostly from \cite{Fahad} and preceding works. In Section \ref{Sec:properties}, we consider various further properties of $\Psi$-tempered fractional integrals and derivatives: some of their limiting behaviours, their applications to some example functions, some composition relations between the operators, and the types of functions which have derivative zero in this model. In Section \ref{Sec:MVTTT}, we prove versions of the mean value theorem and Taylor's theorem in the setting of $\Psi$-tempered fractional calculus. In Section \ref{Sec:fde}, we study nonlinear fractional ordinary differential equations posed using $\Psi$-tempered derivatives of both Riemann--Liouville and Caputo types: firstly proving existence-uniqueness theorems for them, by constructing equivalent integral equations and using the Banach contraction principle, and then studying Ulam type stabilities by using a Gr\"onwall type inequality.

%%%%%%%%%%%%%%%%%%%%%%%%%%%%%%%%%%%%%%%%%%%%%%%%%%%%%%%%%%%%%%%%%%%%%%%%%%%%%%%%%%%%%%%%%%%%%%%%%%%%%%%%%%%%%%%%%

\section{Preliminaries} \label{preliminaries}

Throughout this paper, we let $b>0$ be fixed and let $\Psi$ be a smooth monotonic function on $[0,b]$ with $\Psi'>0$ almost everywhere. The assumption of smoothness is not always required, but it is included here so as to ensure that $\Psi$-fractional derivatives to all orders can be defined on appropriate function space domains.

\begin{definition}[Some function spaces associated with $\Psi$-operators \cite{Fahad,Sousa}]
Given $\Psi$ as above and $\alpha\in(0,1)$, the function space $C_{\alpha;\,\Psi}[0,b]$ is defined by
\[
C_{\alpha;\,\Psi} \left([0, b], \R \right)=\left\lbrace h: (0, b]\to \R\; : \; \big( \Psi(\cdot)-\Psi(0)\big)^{\alpha}h(\cdot)\in C[0, b]\right\rbrace,
\]
endowed with the norm 
\[
\left\|h\right\|_{C_{\alpha;\,\Psi}[0, b]}=\sup_{t\in[a, b]}\Big| \big(\Psi(t)-\Psi(0)\big)^{\alpha}h(t)\Big|.
\]
Furthermore, we define the function space $AC_{\Psi}^n[0, b]$ by
\[
AC_{\Psi}^n[0, b]=\left\lbrace h: [0, b]\to \R\; : \; \left(\frac{1}{\Psi'(t)}\cdot\frac{\mathrm{d}}{\mathrm{d}t}\right)^n\big(h(t)\big)\in AC[0, b]\right\rbrace.
\]
\end{definition}

\subsection{Fractional Integrals and Derivatives}
\begin{definition}[Riemann--Liouville and Caputo operators \cite{Kilbas}] \label{Def:RLC}
If $y$ is an absolutely integrable function defined on $[0, b]$, then the Riemann--Liouville (RL) fractional integral of order $\alpha\in(0, \infty)$ of the function $y$ is given by 
\begin{equation*}
\prescript{}{0}{\mathscr{I}}_{t}^{\alpha}y\left(t\right)=\frac{1}{\Gamma \left( \alpha\right)}\int_{0}^{t}\left(t-s\right)^{\alpha -1}y\left( s\right) \,\mathrm{d}s.
\end{equation*}
Further, if $y\in AC^n[0, b]$, then the Riemann--Liouville (RL) fractional derivative of order $\alpha\in(n-1,n)$ of the function $y$ is given by
\begin{equation*}
\prescript{RL}{0}{\mathscr{D}}^{\alpha}_{t}y(t)=\left(\frac{\mathrm{d}}{\mathrm{d}t}\right)^n\,\prescript{}{0}{\mathscr{I}}_{t}^{n-\alpha} y(t),
\end{equation*}
and the Caputo fractional derivative of order $\alpha\in(n-1,n)$ of the function $y$ is given by
\begin{equation*}
\prescript{C}{0}{\mathscr{D}}^{\alpha}_{t}y(t)=\prescript{}{0}{\mathscr{I}}_{t}^{n-\alpha}\,\left(\frac{\mathrm{d}}{\mathrm{d}t}\right)^n y(t).
\end{equation*}
Note that both Riemann--Liouville and Caputo derivatives are defined by compositions of the fractional integral with a finite repetition of the ordinary (first-order) derivative $\frac{\mathrm{d}}{\mathrm{d}t}$.
\end{definition}

\begin{definition}[Tempered fractional calculus \cite{Li,Fernandez}] \label{Def:tempered}
If $y$ is an absolutely integrable function defined on $[0, b]$, then the tempered fractional integral of order $\alpha\in(0, \infty)$ and index $\lambda\in\mathbb{R}$ of the function $y$ is given by 
\begin{equation*}
\prescript{T}{0}{\mathscr{I}}_{t}^{\alpha,\lambda}y\left(t\right)=\frac{1}{\Gamma \left( \alpha\right)}\int_{0}^{t}\left(t-s\right)^{\alpha -1}e^{-\lambda(t-s)}y\left( s\right) \,\mathrm{d}s.
\end{equation*}
Further, if $y\in AC^n[0, b]$, then the tempered fractional derivative of Riemann--Liouville and Caputo types, with order $\alpha\in(n-1,n)$ and index $\lambda\in\mathbb{R}$, of the function $y$ are given respectively by:
\begin{align*}
\prescript{TR}{0}{\mathscr{D}}^{\alpha,\lambda}_{t}y(t)&=\left(\frac{\mathrm{d}}{\mathrm{d}t}+\lambda\right)^n\,\prescript{T}{0}{\mathscr{I}}_{t}^{n-\alpha,\lambda} y(t), \\
\prescript{TC}{0}{\mathscr{D}}^{\alpha,\lambda}_{t}y(t)&=\prescript{T}{0}{\mathscr{I}}_{t}^{n-\alpha,\lambda}\,\left(\frac{\mathrm{d}}{\mathrm{d}t}+\lambda\right)^n y(t).
\end{align*}
Note that the tempered fractional derivatives of both Riemann--Liouville and Caputo type are defined by compositions of the tempered fractional integral with a finite repetition of a tempered variant of the ordinary (first-order) derivative, namely $\frac{\mathrm{d}}{\mathrm{d}t}+\lambda$. We can write the $n^{th}$-order tempered fractional derivative with index $\lambda\in\mathbb{R}$ of an $n$ times differentiable function $y$ as
\[
\prescript{T}{}{\mathscr{D}}^{n, \lambda} y(t)=\left(\frac{\mathrm{d}}{\mathrm{d}t}+\lambda\right)^n y(t).
\]
\end{definition}

\begin{definition}[$\Psi$-fractional calculus \cite{Kilbas,Osler,Almeida}] \label{Def:Psi}
If $y$ is an absolutely $\Psi$-integrable function defined on $[0, b]$ (i.e. assuming $y\in L^1\left([0,b],\mathrm{d}\Psi\right)$, then the $\Psi$-Riemann--Liouville fractional integral of order $\alpha\in(0, \infty)$ of the function $y$ is given by 
\begin{equation*}
\prescript{}{0}{\mathscr{I}}_{\Psi(t)}^{\alpha}y\left( t\right) =\frac{1}{\Gamma \left( \alpha\right) }\int_{0}^{t}\Psi ^{\prime }\left( s\right) \left( \Psi \left(t\right) -\Psi \left( s\right) \right) ^{\alpha -1}y\left( s\right) \,\mathrm{d}s.
\end{equation*}
Further, if $y$ is in the space $AC_{\Psi}^n[0, b]$ defined in \cite{Fahad}, then the $\Psi$-Riemann--Liouville and $\Psi$-Caputo fractional derivatives of order $\alpha\in(n-1,n)$ of the function $y$ are given respectively by:
\begin{align*}
\prescript{RL}{0}{\mathscr{D}}^{\alpha}_{\Psi(t)}y(t)= \left(\frac{1}{{\Psi}^{'}(t)}\cdot\frac{\mathrm{d}}{\mathrm{d}t}\right)^n\,\prescript{}{0}{\mathscr{I}}_{\Psi(t)}^{n-\alpha} y(t), \\
\prescript{C}{0}{\mathscr{D}}^{\alpha}_{\Psi(t)}y(t)=\prescript{}{0}{\mathscr{I}}_{\Psi(t)}^{n-\alpha}\,\left(\frac{1}{{\Psi}^{'}(t)}\cdot\frac{\mathrm{d}}{\mathrm{d}t}\right)^n y(t).
\end{align*}
Note that both $\Psi$-fractional derivatives (of Riemann--Liouville and Caputo type) are defined by compositions of the $\Psi$-fractional integral with a finite repetition of the ordinary (first-order) derivative with respect to $\Psi$, namely $\frac{1}{{\Psi}^{'}(t)}\cdot\frac{\mathrm{d}}{\mathrm{d}t}$.
\end{definition}

\begin{definition}[$\Psi$-tempered fractional calculus \cite{Fahad}] \label{Def:Psitempered}
If $y$ is an absolutely $\Psi$-integrable function defined on $[0, b]$, then the $\Psi$-tempered fractional integral of order $\alpha\in(0, \infty)$ and index $\lambda\in\mathbb{R}$ of the function $y$ is given by
\begin{equation*}
\prescript{T}{0}{\mathscr{I}}_{\Psi(t)}^{\alpha,\lambda}y\left(t\right)=\frac{1}{\Gamma \left( \alpha\right)}\int_{0}^{t}\Psi'(s)\left(\Psi(t)-\Psi(s)\right)^{\alpha -1}e^{-\lambda\left(\Psi(t)-\Psi(s)\right)}\,y\left( s\right) \,\mathrm{d}s.
\end{equation*}
Further, if $y$ is in $AC_{\Psi}^n[0, b]$, then the $\Psi$-tempered fractional derivatives of Riemann--Liouville and Caputo types, with order $\alpha\in(n-1,n)$ and index $\lambda\in\mathbb{R}$, of the function $y$ are given respectively by:
\begin{align*}
\prescript{TR}{0}{\mathscr{D}}^{\alpha,\lambda}_{\Psi(t)}y(t)=\left(\frac{1}{{\Psi}^{'}(t)}\cdot\frac{\mathrm{d}}{\mathrm{d}t}+\lambda\right)^n\,\prescript{T}{0}{\mathscr{I}}_{\Psi(t)}^{n-\alpha,\lambda} y(t), \\
\prescript{TC}{0}{\mathscr{D}}^{\alpha,\lambda}_{\Psi(t)}y(t)=\prescript{T}{0}{\mathscr{I}}_{\Psi(t)}^{n-\alpha,\lambda}\,\left(\frac{1}{{\Psi}^{'}(t)}\cdot\frac{\mathrm{d}}{\mathrm{d}t}+\lambda\right)^n y(t).
\end{align*}
Note that both $\Psi$-tempered derivatives (of Riemann--Liouville and Caputo type) are defined by compositions of the $\Psi$-tempered integral with a finite repetition of the ordinary (first-order) tempered derivative with respect to $\Psi$, namely $\prescript{T}{}{\mathscr{D}}^{1,\lambda}_{\Psi(t)}=\frac{1}{{\Psi}^{'}(t)}\cdot\frac{\mathrm{d}}{\mathrm{d}t}+\lambda$ giving the iterated version $\prescript{T}{}{\mathscr{D}}^{n,\lambda}_{\Psi(t)}=\left(\frac{1}{{\Psi}^{'}(t)}\cdot\frac{\mathrm{d}}{\mathrm{d}t}+\lambda\right)^n$, for $n\in\N$.
\end{definition}

\begin{rem}
As was observed in \cite{Fahad}, the tempered fractional derivative with respect to $\Psi(t)$ is identical to the Hadamard-type fractional derivative with respect to $e^{\Psi(t)}$. Thus, the theory of tempered fractional calculus with respect to functions is exactly the same as the theory of Hadamard-type fractional calculus with respect to functions. As the previous work \cite{Fahad} focused on the theory from the viewpoint of Hadamard-type fractional calculus, using the notation of Hadamard-type operators with respect to a function, in this paper we shall consider the same theory from the alternative viewpoint of tempered fractional calculus.
\end{rem}

\begin{proposition}[\cite{Fahad}]
The following relations between the above definitions are clear:
\begin{itemize}
\item If $\lambda=0$, then the tempered fractional operators (Definition \ref{Def:tempered}) become the classical fractional operators (Definition \ref{Def:RLC}).
\item If $\Psi(t)=t$, then the $\Psi$-fractional operators (Definition \ref{Def:Psi}) become the classical fractional operators (Definition \ref{Def:RLC}).
\item If $\lambda=0$, then the $\Psi$-tempered fractional operators (Definition \ref{Def:Psitempered}) become the $\Psi$-fractional operators (Definition \ref{Def:Psi}).
\item If $\Psi(t)=t$, then the $\Psi$-tempered fractional operators (Definition \ref{Def:Psitempered}) become the tempered fractional operators (Definition \ref{Def:tempered}).
\item If $\lambda=0$ and $\Psi(t)=t$, then the $\Psi$-tempered fractional operators (Definition \ref{Def:Psitempered}) become the classical fractional operators (Definition \ref{Def:RLC}).
\end{itemize}
\end{proposition}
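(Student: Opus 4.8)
The plan is to verify each of the five reductions by direct substitution of the special parameter values into the integral and differential operators appearing in Definitions \ref{Def:RLC}--\ref{Def:Psitempered}, checking both the fractional integral kernels and the iterated first-order operators used to build the derivatives.

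First I would handle the integrals. For the index reductions (setting $\lambda=0$), the only change is in the exponential factor: since $e^{-\lambda(t-s)}=1$ when $\lambda=0$, the tempered kernel $(t-s)^{\alpha-1}e^{-\lambda(t-s)}$ collapses to the classical kernel $(t-s)^{\alpha-1}$, and similarly $e^{-\lambda(\Psi(t)-\Psi(s))}=1$ reduces the $\Psi$-tempered kernel to the $\Psi$-fractional kernel. For the argument reductions (setting $\Psi(t)=t$), we have $\Psi'(s)=1$ and $\Psi(t)-\Psi(s)=t-s$, so the $\Psi$-fractional kernel $\Psi'(s)(\Psi(t)-\Psi(s))^{\alpha-1}$ becomes $(t-s)^{\alpha-1}$, and likewise the $\Psi$-tempered kernel becomes the tempered kernel. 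The combined case $\lambda=0$, $\Psi(t)=t$ then follows by applying both substitutions at once.

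Next I would treat the derivatives, where the additional point to check is the reduction of the iterated first-order operators, since both the Riemann--Liouville and Caputo derivatives are built as compositions of a fractional integral with powers of such an operator. Here the key observations are: $\frac{\mathrm{d}}{\mathrm{d}t}+\lambda$ reduces to $\frac{\mathrm{d}}{\mathrm{d}t}$ when $\lambda=0$; the operator $\frac{1}{\Psi'(t)}\cdot\frac{\mathrm{d}}{\mathrm{d}t}$ reduces to $\frac{\mathrm{d}}{\mathrm{d}t}$ when $\Psi(t)=t$ (so that $\Psi'(t)=1$); and the $\Psi$-tempered operator $\frac{1}{\Psi'(t)}\cdot\frac{\mathrm{d}}{\mathrm{d}t}+\lambda$ reduces to $\frac{1}{\Psi'(t)}\cdot\frac{\mathrm{d}}{\mathrm{d}t}$ when $\lambda=0$ and to $\frac{\mathrm{d}}{\mathrm{d}t}+\lambda$ when $\Psi(t)=t$. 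Combining these operator reductions with the integral reductions established above, and noting that the differentiation count $n$ and the shifted order $n-\alpha$ are unaffected by either substitution, each composition defining a derivative in Definition \ref{Def:Psitempered} passes termwise to the corresponding composition in the target definition.

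Since every claim reduces to elementary algebraic identities satisfied by the kernels and the first-order operators, there is no substantive obstacle. The only point requiring care is to confirm that the associated function-space domains remain consistent under the substitutions, so that the reduced operators act on the same classes of functions --- for instance, that $AC_{\Psi}^n[0,b]$ coincides with $AC^n[0,b]$ when $\Psi(t)=t$, which again follows directly from the definitions of these spaces.
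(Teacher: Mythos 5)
Your verification by direct substitution is correct and is precisely the routine check the paper has in mind: the paper states these relations without proof, declaring them ``clear'' from the definitions and citing \cite{Fahad}. Your additional care about the kernels, the first-order operators, and the coincidence of the function spaces (e.g.\ $AC_{\Psi}^n[0,b]=AC^n[0,b]$ when $\Psi(t)=t$) fills in exactly the details the paper leaves implicit.
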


Note that the tempered fractional calculus (Definition \ref{Def:tempered}) is a special case of fractional calculus with analytic kernels \cite{fernandez-ozarslan-baleanu} and also a special case of weighted fractional calculus \cite{ff:weighted} -- in fact, it is the unique intersection of these two general classes. This means that $\Psi$-tempered fractional calculus (Definition \ref{Def:Psitempered}) is a special case of fractional calculus with analytic kernels with respect to functions \cite{Oumarou} and also a special case of weighted fractional calculus with respect to functions \cite{agrawal}. As a result, many properties of $\Psi$-tempered fractional calculus can be deduced directly from the more general results found in the aforementioned classes of operators \cite{agrawal,ff:weighted,Oumarou,jarad-abdeljawad-shah}.

For example, \cite[Theorems 3.5 and 3.14]{Oumarou}, together with \cite[Theorem 2.1]{Fernandez} for tempered fractional calculus, gives the following series formulae for the $\Psi$-tempered operators:
\begin{align}
\prescript{T}{0}{\mathscr{I}}_{\Psi(t)}^{\alpha, \lambda}\,y(t)&=\sum_{k=0}^{\infty}\frac{(-\lambda)^k\,\Gamma(\alpha+k)}{k!\,\Gamma(\alpha) }\prescript{}{0}{\mathscr{I}}_{\Psi(t)}^{\alpha+k}\,y(t), \label{series:I} \\
\prescript{TR}{0}{\mathscr{D}}_{\Psi(t)}^{\alpha, \lambda}\,y(t)&=\sum_{k=0}^{\infty}\frac{(-\lambda)^k\,\Gamma(-\alpha+k)}{k!\,\Gamma(-\alpha) }\,\, \prescript{RL}{0}{\mathscr{D}}_{\Psi(t)}^{\alpha}\,\prescript{}{0}{\mathscr{I}}_{\Psi(t)}^{k}\,y(t), \label{series:D}
\end{align}
where $\lambda\in\mathbb{R}$ and (in the first case) $\alpha\in(0,\infty)$ and $y$ is absolutely $\Psi$-integrable on $[0,b]$, or (in the second case) $n-1\leq\alpha<n\in\mathbb{N}$ and $y$ is in $AC_{\Psi}^n[0, b]$.

\subsection{Conjugation relations and their consequences}

\begin{proposition}[Conjugation relations \cite{Kilbas,Fernandez,Fahad}] \label{Conjugationrelations}
The $\Psi$-fractional operators (Definition \ref{Def:Psi}) can be written \cite{Kilbas} as conjugations of the classical fractional operators (Definition \ref{Def:RLC}) with the operator $Q_{\Psi}$ defined by $Q_{\Psi}y=y\circ\Psi$:
\begin{align*}
\prescript{}{0}{\mathscr{I}}_{\Psi(t)}^{\alpha}&=Q_{\Psi}\circ\prescript{}{\Psi(0)}{\mathscr{I}}_{t}^{\alpha}\circ Q_{\Psi}^{-1}, \\
\prescript{RL}{0}{\mathscr{D}}_{\Psi(t)}^{\alpha}&=Q_{\Psi}\circ\prescript{RL}{\Psi(0)}{\mathscr{D}}_{t}^{\alpha}\circ Q_{\Psi}^{-1}, \\
\prescript{C}{0}{\mathscr{D}}_{\Psi(t)}^{\alpha}&=Q_{\Psi}\circ\prescript{C}{\Psi(0)}{\mathscr{D}}_{t}^{\alpha}\circ Q_{\Psi}^{-1},
\end{align*}
where $\circ$ denotes composition of functions or operators.

The tempered fractional operators (Definition \ref{Def:tempered}) can be written \cite{Fernandez} as conjugations of the classical fractional operators (Definition \ref{Def:RLC}) with the operator $M_{\exp(\lambda t)}$ defined by $\left(M_{\exp(\lambda t)}y\right)(t)=e^{\lambda t}y(t)$:
\begin{align*}
\prescript{T}{0}{\mathscr{I}}_{t}^{\alpha,\lambda}=M_{\exp(\lambda t)}^{-1}\circ \prescript{}{0}{\mathscr{I}}_{t}^{\alpha}\circ M_{\exp(\lambda t)}\qquad&\text{ i.e. }\qquad\prescript{T}{0}{\mathscr{I}}_{t}^{\alpha,\lambda}y(t)=e^{-\lambda t}\prescript{}{0}{\mathscr{I}}_{t}^{\alpha}\left(e^{\lambda t}y(t)\right), \\
\prescript{TR}{0}{\mathscr{D}}_{t}^{\alpha,\lambda}=M_{\exp(\lambda t)}^{-1}\circ \prescript{RL}{0}{\mathscr{D}}_{t}^{\alpha}\circ M_{\exp(\lambda t)}\qquad&\text{ i.e. }\qquad\prescript{TR}{0}{\mathscr{D}}_{t}^{\alpha,\lambda}y(t)=e^{-\lambda t}\prescript{RL}{0}{\mathscr{D}}_{t}^{\alpha}\left(e^{\lambda t}y(t)\right), \\
\prescript{TC}{0}{\mathscr{D}}_{t}^{\alpha,\lambda}=M_{\exp(\lambda t)}^{-1}\circ \prescript{C}{0}{\mathscr{D}}_{t}^{\alpha}\circ M_{\exp(\lambda t)}\qquad&\text{ i.e. }\qquad\prescript{TC}{0}{\mathscr{D}}_{t}^{\alpha,\lambda}y(t)=e^{-\lambda t}\prescript{C}{0}{\mathscr{D}}_{t}^{\alpha}\left(e^{\lambda t}y(t)\right),
\end{align*}

The $\Psi$-tempered fractional operators (Definition \ref{Def:Psitempered}) can be written \cite{Fahad} as conjugations of the classical fractional operators (Definition \ref{Def:RLC}) as follows:
\begin{align*}
\prescript{T}{0}{\mathscr{I}}_{\Psi(t)}^{\alpha,\lambda}&=Q_{\Psi}\circ M_{\exp(\lambda t)}^{-1}\circ \prescript{}{\Psi(0)}{\mathscr{I}}_{t}^{\alpha}\circ M_{\exp(\lambda t)}\circ Q_{\Psi}^{-1} \\ &=M_{\exp(\lambda\Psi(t))}^{-1}\circ Q_{\Psi}\circ\prescript{}{\Psi(0)}{\mathscr{I}}_{t}^{\alpha}\circ Q_{\Psi}^{-1}\circ M_{\exp(\lambda t)}, \\
\prescript{TR}{0}{\mathscr{D}}_{\Psi(t)}^{\alpha,\lambda}&=Q_{\Psi}\circ M_{\exp(\lambda t)}^{-1}\circ \prescript{RL}{\Psi(0)}{\mathscr{D}}_{t}^{\alpha}\circ M_{\exp(\lambda t)}\circ Q_{\Psi}^{-1} \\ &=M_{\exp(\lambda\Psi(t))}^{-1}\circ Q_{\Psi}\circ\prescript{RL}{\Psi(0)}{\mathscr{D}}_{t}^{\alpha}\circ Q_{\Psi}^{-1}\circ M_{\exp(\lambda t)}, \\
\prescript{TC}{0}{\mathscr{D}}_{\Psi(t)}^{\alpha,\lambda}&=Q_{\Psi}\circ M_{\exp(\lambda t)}^{-1}\circ \prescript{C}{\Psi(0)}{\mathscr{D}}_{t}^{\alpha}\circ M_{\exp(\lambda t)}\circ Q_{\Psi}^{-1} \\ &=M_{\exp(\lambda\Psi(t))}^{-1}\circ Q_{\Psi}\circ\prescript{C}{\Psi(0)}{\mathscr{D}}_{t}^{\alpha}\circ Q_{\Psi}^{-1}\circ M_{\exp(\lambda t)}.
\end{align*}
In particular, the second operator identity in each case can be rewritten in the following way:
\begin{align}
\prescript{T}{0}{\mathscr{I}}_{\Psi(t)}^{\alpha,\lambda}y(t)&=e^{-\lambda\Psi(t)}\prescript{}{0}{\mathscr{I}}_{\Psi(t)}^{\alpha}\left(e^{\lambda\Psi(t)}y(t)\right), \label{Tprod:I} \\
\prescript{TR}{0}{\mathscr{D}}_{\Psi(t)}^{\alpha,\lambda}y(t)&=e^{-\lambda\Psi(t)}\prescript{RL}{0}{\mathscr{D}}_{\Psi(t)}^{\alpha}\left(e^{\lambda\Psi(t)}y(t)\right), \label{Tprod:R} \\
\prescript{TC}{0}{\mathscr{D}}_{\Psi(t)}^{\alpha,\lambda}y(t)&=e^{-\lambda\Psi(t)}\prescript{C}{0}{\mathscr{D}}_{\Psi(t)}^{\alpha}\left(e^{\lambda\Psi(t)}y(t)\right), \label{Tprod:C}
\end{align}
giving a simple relationship between $\Psi$-tempered fractional calculus and the original tempered fractional calculus.
\end{proposition}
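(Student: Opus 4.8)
The plan is to reduce everything to two elementary ``atomic'' conjugations and then propagate them through the compositional structure of the operators. First I would establish the conjugation by the weight operator $M_{\exp(\lambda t)}$. A one-line computation gives the pointwise identity
\[
e^{-\lambda t}\frac{\mathrm{d}}{\mathrm{d}t}\left(e^{\lambda t}f(t)\right) = \left(\frac{\mathrm{d}}{\mathrm{d}t}+\lambda\right)f(t),
\]
i.e. $M_{\exp(\lambda t)}^{-1}\circ\frac{\mathrm{d}}{\mathrm{d}t}\circ M_{\exp(\lambda t)} = \frac{\mathrm{d}}{\mathrm{d}t}+\lambda$, while inserting $e^{\lambda s}$ into the Riemann--Liouville integrand reproduces the kernel $e^{-\lambda(t-s)}$, giving $M_{\exp(\lambda t)}^{-1}\circ\prescript{}{0}{\mathscr{I}}_t^{\alpha}\circ M_{\exp(\lambda t)} = \prescript{T}{0}{\mathscr{I}}_t^{\alpha,\lambda}$. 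Dually, for the substitution operator $Q_\Psi$ the chain rule yields $\frac{1}{\Psi'(t)}\frac{\mathrm{d}}{\mathrm{d}t}(g\circ\Psi)(t) = g'(\Psi(t))$, i.e. $Q_\Psi\circ\frac{\mathrm{d}}{\mathrm{d}t}\circ Q_\Psi^{-1} = \frac{1}{\Psi'}\frac{\mathrm{d}}{\mathrm{d}t}$, while the change of variables $\tau=\Psi(s)$ in the classical integral based at $\Psi(0)$ produces the factor $\Psi'(s)$ and the kernel $(\Psi(t)-\Psi(s))^{\alpha-1}$, giving $Q_\Psi\circ\prescript{}{\Psi(0)}{\mathscr{I}}_t^{\alpha}\circ Q_\Psi^{-1} = \prescript{}{0}{\mathscr{I}}_{\Psi(t)}^{\alpha}$.

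Next I would lift these to the fractional derivatives. Since conjugation is multiplicative, $A(BC)A^{-1}=(ABA^{-1})(ACA^{-1})$ and $A B^n A^{-1}=(ABA^{-1})^n$, and since each Riemann--Liouville or Caputo derivative is by definition a composition of the corresponding integral with an $n$-fold first-order operator, the derivative conjugations in the first two blocks follow at once from the two atomic identities. In particular the $\Psi$-tempered first-order operator factors as $\frac{1}{\Psi'}\frac{\mathrm{d}}{\mathrm{d}t}+\lambda = Q_\Psi\circ M_{\exp(\lambda t)}^{-1}\circ\frac{\mathrm{d}}{\mathrm{d}t}\circ M_{\exp(\lambda t)}\circ Q_\Psi^{-1}$, so its $n$-th power conjugates the $n$-fold ordinary derivative; composing with the integral conjugation then gives the Riemann--Liouville and Caputo $\Psi$-tempered displays, reading off whichever order of composition the definition prescribes.

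For the $\Psi$-tempered operators themselves, I would compose the two atomic conjugations in series, which produces the first operator identity in each block directly. To pass to the second form, and thence to the product formulae \eqref{Tprod:I}--\eqref{Tprod:C}, the key device is the intertwining of substitution and multiplication,
\[
Q_\Psi\circ M_g = M_{g\circ\Psi}\circ Q_\Psi,\qquad\text{equivalently}\qquad M_g\circ Q_\Psi^{-1} = Q_\Psi^{-1}\circ M_{g\circ\Psi},
\]
valid for any multiplier $M_g$. Applying this with $g(t)=e^{\mp\lambda t}$ pushes both outer weight operators through $Q_\Psi$, converting the weight $e^{\lambda t}$ into $e^{\lambda\Psi(t)}$ and regrouping the middle factors into the $\Psi$-operators of Definition \ref{Def:Psi}; this yields $\prescript{T}{0}{\mathscr{I}}_{\Psi(t)}^{\alpha,\lambda} = M_{\exp(\lambda\Psi(t))}^{-1}\circ\prescript{}{0}{\mathscr{I}}_{\Psi(t)}^{\alpha}\circ M_{\exp(\lambda\Psi(t))}$ together with its two derivative analogues, which are precisely \eqref{Tprod:I}--\eqref{Tprod:C}.

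The main obstacle I anticipate is bookkeeping rather than conceptual: one must track the base point $\Psi(0)$ of the classical operators and, above all, respect the fact that $Q_\Psi$ and the multiplier do not commute but only intertwine via $Q_\Psi\circ M_g = M_{g\circ\Psi}\circ Q_\Psi$. It is exactly this non-commutation that turns the weight $e^{\lambda t}$ into $e^{\lambda\Psi(t)}$ as it is carried across $Q_\Psi$, so that a careless cancellation of $M_{\exp(\lambda t)}$ against $M_{\exp(\lambda t)}^{-1}$ sitting on opposite sides of $Q_\Psi$ would produce the wrong weight. Keeping the two exponential weights $e^{\lambda t}$ and $e^{\lambda\Psi(t)}$ carefully distinguished throughout is the single delicate point of the argument.
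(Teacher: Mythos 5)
Your proposal is correct, and it supplies an argument that the paper itself never writes out: in the paper this Proposition is quoted from \cite{Kilbas,Fernandez,Fahad} with no proof given, so there is nothing internal to compare against. Your route --- the two atomic conjugations $M_{\exp(\lambda t)}^{-1}\circ\frac{\mathrm{d}}{\mathrm{d}t}\circ M_{\exp(\lambda t)}=\frac{\mathrm{d}}{\mathrm{d}t}+\lambda$ and $Q_{\Psi}\circ\frac{\mathrm{d}}{\mathrm{d}t}\circ Q_{\Psi}^{-1}=\frac{1}{\Psi'}\frac{\mathrm{d}}{\mathrm{d}t}$ together with their integral counterparts, propagated through the defining compositions by multiplicativity of conjugation, $A(BC)A^{-1}=(ABA^{-1})(ACA^{-1})$ --- is exactly the mechanism underlying the cited results, so in substance you have reconstructed the intended justification rather than found a different one.

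One point in your write-up deserves emphasis because it goes beyond mere verification: your intertwining identity $Q_{\Psi}\circ M_{g}=M_{g\circ\Psi}\circ Q_{\Psi}$, applied on \emph{both} sides of the first operator identity, yields
\[
\prescript{T}{0}{\mathscr{I}}_{\Psi(t)}^{\alpha,\lambda}=M_{\exp(\lambda\Psi(t))}^{-1}\circ Q_{\Psi}\circ\prescript{}{\Psi(0)}{\mathscr{I}}_{t}^{\alpha}\circ Q_{\Psi}^{-1}\circ M_{\exp(\lambda\Psi(t))},
\]
with the weight $e^{\lambda\Psi(t)}$ appearing in the trailing factor as well, whereas the Proposition as printed ends with $M_{\exp(\lambda t)}$. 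Your version is the correct one: it is the only form consistent with \eqref{Tprod:I}--\eqref{Tprod:C} (a direct check shows $e^{-\lambda\Psi(t)}\prescript{}{0}{\mathscr{I}}_{\Psi(t)}^{\alpha}\left(e^{\lambda t}y(t)\right)$ does \emph{not} equal the $\Psi$-tempered integral unless $\Psi(t)=t$), so the printed trailing $M_{\exp(\lambda t)}$ is a typo that your careful tracking of the non-commutation of $Q_{\Psi}$ with multipliers correctly repairs. This is precisely the ``delicate point'' you flagged, and you handled it correctly.
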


Conjugation relations as listed above are immensely important in studying and understanding operators \cite{ff:conjug} such as those of tempered fractional calculus and $\Psi$-fractional calculus, and therefore by extension also $\Psi$-tempered fractional calculus. For example, the following semigroup property of $\Psi$-tempered fractional integrals is now an immediate consequence of the corresponding (well-known) property of Riemann--Liouville fractional integrals \cite[Lemma 2.3]{Kilbas}:
\begin{equation}
\label{semi:I}
\prescript{T}{0}{\mathscr{I}}_{\Psi(t)}^{\alpha_1,\lambda}\,\prescript{T}{0}{\mathscr{I}}_{\Psi(t)}^{\alpha_2, \lambda}\,y(t)=\prescript{T}{0}{\mathscr{I}}_{\Psi(t)}^{\alpha_1+\alpha_2, \lambda}\,y(t),\qquad\alpha_1,\alpha_2\in(0,\infty),\,\lambda\in\mathbb{R},
\end{equation}
this identity being true for $\Psi$-a.e. $t\in[0,b]$, or for all $t\in[0,b]$ if $\alpha_1+\alpha_2\geq1$, and where $y$ is an absolutely $\Psi$-integrable function on $[0,b]$. Similarly, from a well-known property of Riemann--Liouville fractional derivatives \cite[Property 2.3]{Kilbas}, we obtain immediately the following semigroup property of $\Psi$-tempered derivatives of Riemann--Liouville type:
\begin{multline*}
\prescript{T}{}{\mathscr{D}}_{\Psi(t)}^{m,\lambda}\,\prescript{TR}{0}{\mathscr{D}}_{\Psi(t)}^{\alpha,\lambda}\,y(t)=\left(\frac{1}{{\Psi}^{'}(t)}\cdot\frac{\mathrm{d}}{\mathrm{d}t}+\lambda\right)^m\prescript{TR}{0}{\mathscr{D}}_{\Psi(t)}^{\alpha,\lambda}\,y(t) \\ =\prescript{TR}{0}{\mathscr{D}}_{\Psi(t)}^{\alpha+m,\lambda}\,y(t),\qquad\alpha>0,\,m\in\mathbb{N},\,\lambda\in\mathbb{R},
\end{multline*}
this identity being true for any function $y$ such that both expressions exist. (In the derivative of integer order $m$, we have replaced the ``TR'' notation by simply ``T'' and suppressed the ``$0$'', as in this case there is no initial value dependence and no difference between Riemann--Liouville and Caputo type derivatives.) Finally, another well-known property of Riemann--Liouville fractional derivatives and integrals \cite[Theorem 2.5]{Samko} yields immediately the following semigroup property of $\Psi$-tempered operators:
\begin{equation} \label{DI}
\prescript{TR}{0}{\mathscr{D}}_{\Psi(t)}^{\alpha,\lambda}\,\left[\prescript{T}{0}{\mathscr{I}}_{\Psi(t)}^{\beta,\lambda}\, y(t)\right] =
\begin{cases}
\prescript{T}{0}{\mathscr{I}}_{\Psi(t)}^{\beta-\alpha,\lambda}\,y(t), &\quad \text{if }\alpha<\beta; \\
\qquad y(t), &\quad\text{if }\alpha=\beta; \\
\prescript{TR}{0}{\mathscr{D}}_{\Psi(t)}^{\alpha-\beta,\lambda}\,y(t), &\quad \text{if }\alpha>\beta,
\end{cases}
\end{equation}
where in all cases $\alpha,\beta\in(0,\infty)$ and $\lambda\in\mathbb{R}$ is arbitrary, and where $y$ is absolutely $\Psi$-integrable in the first two cases, or any function such that both expressions exist in the last case.

Note that all three of the above semigroup relations were already proved in \cite[Proposition 3.12]{Fahad}, so we do not formalise them into a Proposition or Theorem here. We have reproduced them just in order to show the results more easily for the reader, in a way so that the different possible cases are more readily understandable without needing the complexity of analytic continuation in the fractional order.

\section{Further properties of $\Psi$-tempered fractional calculus} \label{Sec:properties}

%In this section, we will consider various further properties of $\Psi$-tempered fractional integrals and derivatives: their applications to some example functions, some composition relations that are not simply semigroup properties, the types of functions which have derivative zero in this model, some limiting behaviours, and a version of the mean value theorem and Taylor's theorem for these operators.

\subsection{Limiting behaviour}

It is clear from the definition of the $\Psi$-tempered fractional integral that it is equal to $0$ at the lower limit point $t\to0$ if the function $y$ is bounded on $[0,b]$:
\begin{align*}
\left|\prescript{T}{0}{\mathscr{I}}_{\Psi(t)}^{\alpha,\lambda}y\left(t\right)\right|&\leq\frac{1}{\Gamma \left( \alpha\right)}\int_{0}^{t}\Psi'(s)\left(\Psi(t)-\Psi(s)\right)^{\alpha -1}e^{-\lambda\left(\Psi(t)-\Psi(s)\right)}\big|y\left( s\right)\big| \,\mathrm{d}s \\
&\leq\frac{\sup_{[0,b]}|y|}{\Gamma \left( \alpha\right)}\int_{0}^{t}\Psi'(s)\left(\Psi(t)-\Psi(s)\right)^{\alpha -1}\,\mathrm{d}s \\
&=\frac{\sup_{[0,b]}|y|}{\Gamma \left( \alpha+1\right)}\left(\Psi(t)-\Psi(0)\right)^{\alpha},
\end{align*}
and this upper bound tends to $0$ as $t\to0$, provided that $\alpha>0$. Therefore, we have the following conclusion:
\begin{equation}
\label{limit0:int}
\lim_{t\to0^+}\left[\prescript{T}{0}{\mathscr{I}}_{\Psi(t)}^{\alpha,\lambda}y\left(t\right)\right]=0,\qquad y\in C[0,b].
\end{equation}
It follows then, from the definition of the $\Psi$-tempered derivative of Caputo type, that:
\begin{equation}
\label{limit0:Cap}
\lim_{t\to0^+}\left[\prescript{TC}{0}{\mathscr{D}}_{\Psi(t)}^{\alpha,\lambda}y\left(t\right)\right]=0,\qquad y\in C^n[0,b],\quad n-1<\alpha<n.
\end{equation}

Having investigated the limiting behaviour of some $\Psi$-tempered fractional operators at the left limit point of the function's domain, let us also examine their limiting behaviour at integer limit points within the domain for the fractional order of differentiation. It is already known \cite[Proposition 3.11]{Fahad} that the $\Psi$-tempered Riemann--Liouville integral and derivative of order $\alpha$ and index $\lambda$ are continuous and analytic functions of $\alpha$, so all of their possible limiting behaviours are well understood. For the Caputo-type derivative, the result is given as follows.

\begin{theorem}\label{t21}
If $\lambda\in\mathbb{R}$ and $n\in\N$, then
\[
\lim_{\alpha\to n^-}\left[\prescript{TC}{0}{\mathscr{D}}_{\Psi(t)}^{\alpha, \lambda} y(t)\right]=\prescript{T}{}{\mathscr{D}}_{\Psi(t)}^{n, \lambda} y(t)
\]
and
\[
\lim_{\alpha\to(n-1)^+}\left[\prescript{TC}{0}{\mathscr{D}}_{\Psi(t)}^{\alpha, \lambda} y(t)\right]=\prescript{T}{}{\mathscr{D}}_{\Psi(t)}^{n-1, \lambda} y(t)-e^{-\lambda(\Psi(t)-\Psi(0))}\left[\prescript{T}{}{\mathscr{D}}_{\Psi(t)}^{n-1,\lambda}\,y(t)\right]_{t=0}.
\]
\end{theorem}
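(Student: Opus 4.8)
\section*{Proof proposal}

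The plan is to deduce both limits from the corresponding classical limiting behaviour of the Caputo derivative, transported through the conjugation relations of Proposition \ref{Conjugationrelations}. Since the conjugating operators ($Q_\Psi$ and the multiplication operators $M_{\exp(\lambda t)}$, $M_{\exp(\lambda\Psi(t))}$) do not depend on $\alpha$ and are continuous, the limit in $\alpha$ may be taken inside them. Throughout I assume $y\in C^n[0,b]$, so that $h:=e^{\lambda\Psi(\cdot)}y(\cdot)$ and $G:=Q_\Psi^{-1}h$ are $n$ times continuously differentiable on $[0,b]$ and on $[\Psi(0),\Psi(b)]$ respectively, and every composition below is meaningful.

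First I would record the two classical facts, valid for the ordinary Caputo derivative with base point $a=\Psi(0)$ and any $G\in C^n$: namely $\lim_{\alpha\to n^-}\prescript{C}{a}{\mathscr{D}}_t^{\alpha}G=G^{(n)}$ and $\lim_{\alpha\to(n-1)^+}\prescript{C}{a}{\mathscr{D}}_t^{\alpha}G=G^{(n-1)}-G^{(n-1)}(a)$. Both follow at once from $\prescript{C}{a}{\mathscr{D}}_t^{\alpha}G=\prescript{}{a}{\mathscr{I}}_t^{n-\alpha}G^{(n)}$, letting the order $n-\alpha$ of the Riemann--Liouville integral tend to $0^+$ (giving the identity) or to $1^-$ (giving a single antiderivative, hence the boundary subtraction). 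Applying $Q_\Psi$ and using the operator identity $Q_\Psi\circ\frac{d}{du}\circ Q_\Psi^{-1}=\frac{1}{\Psi'(t)}\cdot\frac{d}{dt}$ (a one-line chain-rule computation, iterated $n$ times) converts these into the analogous statements for the $\Psi$-Caputo derivative via the first conjugation of Proposition \ref{Conjugationrelations}: explicitly $\lim_{\alpha\to n^-}\prescript{C}{0}{\mathscr{D}}_{\Psi(t)}^{\alpha}h=\big(\frac{1}{\Psi'}\frac{d}{dt}\big)^n h$ and $\lim_{\alpha\to(n-1)^+}\prescript{C}{0}{\mathscr{D}}_{\Psi(t)}^{\alpha}h=\big(\frac{1}{\Psi'}\frac{d}{dt}\big)^{n-1}h-\big[\big(\frac{1}{\Psi'}\frac{d}{dt}\big)^{n-1}h\big]_{t=0}$, the boundary value $G^{(n-1)}(\Psi(0))$ becoming the value at $t=0$ since $(Q_\Psi F)(0)=F(\Psi(0))$.

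For the first limit I would then strip the tempering using \eqref{Tprod:C}, together with the integer-order tempering identity $e^{-\lambda\Psi(t)}\big(\frac{1}{\Psi'}\frac{d}{dt}\big)^m\big(e^{\lambda\Psi(t)}y\big)=\big(\frac{1}{\Psi'}\frac{d}{dt}+\lambda\big)^m y=\prescript{T}{}{\mathscr{D}}_{\Psi(t)}^{m,\lambda}y$, which is exactly the conjugation built into Definition \ref{Def:Psitempered} and is verified for $m=1$ by the product rule and in general by induction. Since the prefactor $e^{-\lambda\Psi(t)}$ in \eqref{Tprod:C} is independent of $\alpha$, the case $m=n$ yields $\lim_{\alpha\to n^-}\prescript{TC}{0}{\mathscr{D}}_{\Psi(t)}^{\alpha,\lambda}y=\prescript{T}{}{\mathscr{D}}_{\Psi(t)}^{n,\lambda}y$. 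For the second limit the first term ($m=n-1$) gives $\prescript{T}{}{\mathscr{D}}_{\Psi(t)}^{n-1,\lambda}y$; for the boundary term I would apply the same identity at $t=0$, writing $\big[\big(\frac{1}{\Psi'}\frac{d}{dt}\big)^{n-1}h\big]_{t=0}=e^{\lambda\Psi(0)}\big[\prescript{T}{}{\mathscr{D}}_{\Psi(t)}^{n-1,\lambda}y\big]_{t=0}$, so that multiplying by $e^{-\lambda\Psi(t)}$ reconstitutes precisely $e^{-\lambda(\Psi(t)-\Psi(0))}\big[\prescript{T}{}{\mathscr{D}}_{\Psi(t)}^{n-1,\lambda}y\big]_{t=0}$.

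The one place demanding care, and the main potential obstacle, is the bookkeeping of this boundary term through the two successive conjugations: the evaluation point $\Psi(0)$ arising from the classical formula must be matched against the $\alpha$-independent prefactor $e^{-\lambda\Psi(t)}$ so as to recover both the exponential $e^{-\lambda(\Psi(t)-\Psi(0))}$ and the tempered integer-order derivative $\prescript{T}{}{\mathscr{D}}_{\Psi(t)}^{n-1,\lambda}$ evaluated at $t=0$. By contrast, the interchange of the $\alpha$-limit with the conjugating operators is routine, as none of them involves $\alpha$; the only analytic input is the two classical Caputo limits, which themselves reduce to the elementary limiting behaviour of the Riemann--Liouville integral as its order approaches $0$ or $1$.
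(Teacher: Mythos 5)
Your proposal is correct, but its architecture differs from the paper's own proof. The paper never passes through the classical Caputo derivative: it works directly with the integral representation of $\prescript{TC}{0}{\mathscr{D}}_{\Psi(t)}^{\alpha,\lambda}y$ in the $\Psi$-tempered setting, takes the limit under the integral sign as $\alpha\to(n-1)^+$ (where the kernel $(\Psi(t)-\Psi(s))^{n-\alpha-1}/\Gamma(n-\alpha)$ tends to $1$) and applies the fundamental theorem of calculus; for the limit $\alpha\to n^-$ it first integrates by parts, so that the new kernel $(\Psi(t)-\Psi(s))^{n-\alpha}/\Gamma(n-\alpha+1)$ again tends to $1$ and the resulting boundary terms cancel. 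Your argument instead cites the two known limits for the classical Caputo derivative and transports them through the conjugation relations of Proposition \ref{Conjugationrelations}; the boundary-term bookkeeping that you flag as the delicate point is in fact carried out identically in the paper (inserting $e^{\lambda\Psi(0)}$ and pulling out the factor $e^{-\lambda(\Psi(t)-\Psi(0))}$), so that step is sound. What your route buys is modularity and slightly weaker hypotheses: the only analytic input is the limiting behaviour of the Riemann--Liouville integral as its order tends to $0^+$ or $1^-$, and $y\in C^n[0,b]$ suffices throughout, whereas the paper's integration by parts tacitly requires the $n$-th $\Psi$-derivative of $e^{\lambda\Psi(t)}y(t)$ to be differentiable (roughly $C^{n+1}$-type regularity, which the theorem statement does not announce). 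What the paper's route buys is self-containedness: the integration by parts avoids any appeal to the approximation-of-identity fact that $\prescript{}{a}{\mathscr{I}}_{t}^{\beta}f\to f$ pointwise as $\beta\to0^+$ for merely continuous $f$, which is exactly what your phrase ``giving the identity'' relies on; that fact is standard but not trivial, so in a final write-up you should either cite it (e.g.\ from Diethelm or Samko et al.) or include the short proof splitting $f(s)=f(t)+(f(s)-f(t))$.
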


\begin{proof}
Taking $n-1<\alpha<n$ without loss of generality, we have
\begin{equation}
\label{limiting:step}
\prescript{TC}{0}{\mathscr{D}}_{\Psi(t)}^{\alpha, \lambda}\,y(t)=\frac{e^{-\lambda\Psi(t)}}{\Gamma(n-\alpha)}\int_{0}^{t} \Psi'(s)\left(\Psi(t)-\Psi(s)\right)^{n-\alpha-1} \left( \frac{1}{\Psi'(s)}\cdot\frac{\mathrm{d}}{\mathrm{d}s}\right)^{n}\left(e^{\lambda\Psi(s)} y(s)\right)\,\mathrm{d}s.
\end{equation}
Taking the limit as $\alpha\to(n-1)^+$ on both sides of \eqref{limiting:step} gives
\begin{align*}
\lim_{\alpha\to(n-1)^+}\left[\prescript{TC}{0}{\mathscr{D}}_{\Psi(t)}^{\alpha, \lambda}\,y(t)\right]&=e^{-\lambda\Psi(t)}\int_{0}^{t} \Psi'(s)\left( \frac{1}{\Psi'(s)}\cdot\frac{\mathrm{d}}{\mathrm{d}s}\right)^{n}\left(e^{\lambda\Psi(s)} y(s)\right)\,\mathrm{d}s \\
&\hspace{-2cm}=e^{-\lambda\Psi(t)}\int_{0}^{t} \frac{\mathrm{d}}{\mathrm{d}s}\left( \frac{1}{\Psi'(s)}\cdot\frac{\mathrm{d}}{\mathrm{d}s}\right)^{n-1}\left(e^{\lambda\Psi(s)} y(s)\right)\,\mathrm{d}s \\
&\hspace{-2cm}=e^{-\lambda\Psi(t)}\Bigg[\left( \frac{1}{\Psi'(t)}\cdot\frac{\mathrm{d}}{\mathrm{d}t}\right)^{n-1}\left(e^{\lambda\Psi(t)} y(t)\right) \\ &-\left[\left( \frac{1}{\Psi'(t)}\cdot\frac{\mathrm{d}}{\mathrm{d}t}\right)^{n-1}\left(e^{\lambda\Psi(t)} y(t)\right)\right]_{t=0}\Bigg] \\
&\hspace{-2cm}=\prescript{T}{}{\mathscr{D}}_{\Psi(t)}^{n-1,\lambda}-e^{-\lambda(\Psi(t)-\Psi(0))}\left[e^{-\lambda\Psi(t)}\left( \frac{1}{\Psi'(t)}\cdot\frac{\mathrm{d}}{\mathrm{d}t}\right)^{n-1}\left(e^{\lambda\Psi(t)} y(t)\right)\right]_{t=0} \\
&\hspace{-2cm}=\prescript{T}{}{\mathscr{D}}_{\Psi(t)}^{n-1,\lambda}-e^{-\lambda(\Psi(t)-\Psi(0))}\left[\prescript{T}{}{\mathscr{D}}_{\Psi(t)}^{n-1,\lambda}\,y(t)\right]_{t=0},
\end{align*}
which is the second of the two required results. To get the first one, we integrate \eqref{limiting:step} by parts to get
\begin{multline*}
\prescript{TC}{0}{\mathscr{D}}_{\Psi(t)}^{\alpha, \lambda}\,y(t)=e^{-\lambda\Psi(t)}\Bigg\lbrace \frac{(\Psi(t)-\Psi(0))^{n-\alpha}}{\Gamma(n-\alpha+1)} \left[  \left( \frac{1}{\Psi'(t)}\cdot\frac{\mathrm{d}}{\mathrm{d}t}\right)^{n} \left( e^{\lambda\Psi(t)} y(t)\right) \right]_{t=0} \\
+ \frac{1}{\Gamma(n-\alpha+1)} \int_{0}^{t}\left(\Psi(t)-\Psi(s)\right)^{n-\alpha}\,\frac{\mathrm{d}}{\mathrm{d}s}\left[  \left( \frac{1}{\Psi'(s)}\cdot\frac{\mathrm{d}}{\mathrm{d}s}\right)^{n} \left( e^{\lambda\Psi(s)} y(s)\right) \right] \,\mathrm{d}s\Bigg\rbrace.
\end{multline*}
Taking the limit as $\alpha\to n^- $ on both sides, we get
\begin{align*}
\lim_{\alpha\to n^-}\left[\prescript{TC}{0}{\mathscr{D}}_{\Psi(t)}^{\alpha, \lambda} y(t)\right]
&=e^{-\lambda\Psi(t)}\Bigg\lbrace \left[ \left( \frac{1}{\Psi'(t)}\cdot\frac{\mathrm{d}}{\mathrm{d}t}\right)^{n} \left( e^{\lambda\Psi(t)} y(t)\right) \right]_{t=0} \\
&\hspace{2cm}+ \int_{0}^{t}\frac{\mathrm{d}}{\mathrm{d}s}\left[  \left( \frac{1}{\Psi'(s)}\cdot\frac{\mathrm{d}}{\mathrm{d}s}\right)^{n} \left( e^{\lambda\Psi(s)} y(s)\right) \right] \,\mathrm{d}s\Bigg\rbrace \\
&\hspace{-2cm}=e^{-\lambda\Psi(t)}\Bigg\lbrace \left[ \left( \frac{1}{\Psi'(t)}\cdot\frac{\mathrm{d}}{\mathrm{d}t}\right)^{n} \left( e^{\lambda\Psi(t)} y(t)\right) \right]_{t=0} \\
&+\left( \frac{1}{\Psi'(t)}\cdot\frac{\mathrm{d}}{\mathrm{d}t}\right)^{n} \left( e^{\lambda\Psi(t)} y(t)\right)-\left[ \left( \frac{1}{\Psi'(t)}\cdot\frac{\mathrm{d}}{\mathrm{d}t}\right)^{n} \left( e^{\lambda\Psi(t)} y(t)\right) \right]_{t=0}\Bigg\rbrace \\
&\hspace{-2cm}= e^{-\lambda\Psi(t)} \left( \frac{1}{\Psi'(t)}\cdot \frac{\mathrm{d}}{\mathrm{d}t}\right)^{n} \left( e^{\lambda\Psi(t)} y(t)\right)\\
&\hspace{-2cm}=\prescript{T}{}{\mathscr{D}}_{\Psi(t)}^{n, \lambda}\, y(t).
\end{align*} 
\end{proof}

\subsection{Example functions}

It is well known that the Riemann--Liouville operators act on the set of functions of the form $\frac{t^{\xi}}{\Gamma(\xi+1)}$ by increasing or decreasing the index $\xi$ by the order of integration or differentiation, and it follows from conjugation relations \cite[Properties 2.18 and 2.20]{Kilbas} that the $\Psi$-fractional operators act similarly on the set of functions of the form $\frac{(\Psi(t)-\Psi(0))^{\xi}}{\Gamma(\xi+1)}$. Similarly, the following action of the $\Psi$-tempered fractional operators follows immediately from conjugation relations too.

\begin{proposition}[{\cite[Proposition 3.13]{Fahad}}] \label{Prop:specfunc1}
If $\alpha\in(0,\infty)$, $\lambda\in\mathbb{R}$ and $\xi\in(-1,\infty)$, then
\begin{align*}
\prescript{T}{0}{\mathscr{I}}_{\Psi(t)}^{\alpha, \lambda} \left( e^{-\lambda\Psi(t)}\left( \Psi(t)-\Psi(0)\right)^\xi\right) &=\frac{\Gamma(\xi+1)}{\Gamma(\xi+\alpha+1)}\,e^{-\lambda\Psi(t)}\left( \Psi(t)-\Psi(0)\right)^{\xi+\alpha}, \\
\prescript{TR}{0}{\mathscr{D}}_{\Psi(t)}^{\alpha, \lambda} \left( e^{-\lambda\Psi(t)}\left( \Psi(t)-\Psi(0)\right)^\xi\right) &=\frac{\Gamma(\xi+1)}{\Gamma(\xi-\alpha+1)}\,e^{-\lambda\Psi(t)}\left( \Psi(t)-\Psi(0)\right)^{\xi-\alpha},
\end{align*}
and if in addition $\xi>\lfloor\alpha\rfloor$, then
\[
\prescript{TC}{0}{\mathscr{D}}_{\Psi(t)}^{\alpha, \lambda} \left( e^{-\lambda\Psi(t)}\left( \Psi(t)-\Psi(0)\right)^\xi\right)=\frac{\Gamma(\xi+1)}{\Gamma(\xi-\alpha+1)}\,e^{-\lambda\Psi(t)}\left( \Psi(t)-\Psi(0)\right)^{\xi-\alpha}.
\]
\end{proposition}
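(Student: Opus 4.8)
The plan is to reduce all three identities to the already-known action of the untempered $\Psi$-fractional operators on power-type functions, exploiting the product relations \eqref{Tprod:I}, \eqref{Tprod:R}, \eqref{Tprod:C} from Proposition \ref{Conjugationrelations}. The key observation is that the test function $e^{-\lambda\Psi(t)}\left(\Psi(t)-\Psi(0)\right)^\xi$ has been chosen precisely so that the tempering weight cancels: multiplying it by the factor $e^{\lambda\Psi(t)}$ appearing inside the product relations returns a bare power of $\Psi(t)-\Psi(0)$, reducing the tempered computation to an untempered one.

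First I would treat the integral. Applying \eqref{Tprod:I} with $y(t)=e^{-\lambda\Psi(t)}\left(\Psi(t)-\Psi(0)\right)^\xi$, the inner factor $e^{\lambda\Psi(t)}y(t)$ collapses to $\left(\Psi(t)-\Psi(0)\right)^\xi$, giving
\[
\prescript{T}{0}{\mathscr{I}}_{\Psi(t)}^{\alpha,\lambda}\left(e^{-\lambda\Psi(t)}\left(\Psi(t)-\Psi(0)\right)^\xi\right)=e^{-\lambda\Psi(t)}\,\prescript{}{0}{\mathscr{I}}_{\Psi(t)}^{\alpha}\left(\left(\Psi(t)-\Psi(0)\right)^\xi\right).
\]
It then remains to invoke the standard $\Psi$-fractional integral of a power, namely $\prescript{}{0}{\mathscr{I}}_{\Psi(t)}^{\alpha}\left(\left(\Psi(t)-\Psi(0)\right)^\xi\right)=\frac{\Gamma(\xi+1)}{\Gamma(\xi+\alpha+1)}\left(\Psi(t)-\Psi(0)\right)^{\xi+\alpha}$, which is the $Q_\Psi$-conjugate of the classical Riemann--Liouville formula \cite[Property 2.18]{Kilbas}. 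Restoring the factor $e^{-\lambda\Psi(t)}$ yields the first claimed identity.

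The Riemann--Liouville derivative case is entirely parallel: using \eqref{Tprod:R} in place of \eqref{Tprod:I}, the same cancellation reduces the problem to $\prescript{RL}{0}{\mathscr{D}}_{\Psi(t)}^{\alpha}\left(\left(\Psi(t)-\Psi(0)\right)^\xi\right)=\frac{\Gamma(\xi+1)}{\Gamma(\xi-\alpha+1)}\left(\Psi(t)-\Psi(0)\right)^{\xi-\alpha}$ (the conjugated version of \cite[Property 2.20]{Kilbas}), whence the second identity follows. For the Caputo case I would argue identically through \eqref{Tprod:C}; the only extra ingredient is that the untempered $\Psi$-Caputo and $\Psi$-Riemann--Liouville derivatives of the power $\left(\Psi(t)-\Psi(0)\right)^\xi$ coincide exactly when $\xi>\lfloor\alpha\rfloor$. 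With $n=\lfloor\alpha\rfloor+1$, this inequality ensures that the $n$-fold operator $\left(\frac{1}{\Psi'(t)}\cdot\frac{\mathrm{d}}{\mathrm{d}t}\right)^{n}$ sends the power to $\left(\Psi(t)-\Psi(0)\right)^{\xi-n}$ with exponent $\xi-n>-1$ (so that the subsequent fractional integration is well-defined and no spurious initial-value terms survive), and it is precisely the hypothesis imposed in the statement.

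Since each step is a substitution followed by citation, there is no genuine obstacle; the only point needing care is the bookkeeping of the exponential weights, namely checking that $e^{\lambda\Psi(t)}$ exactly cancels the $e^{-\lambda\Psi(t)}$ built into the test function so that the tempered problem collapses to the untempered $\Psi$-fractional one. This cancellation is the whole content of the result, and it explains why the weighted powers $e^{-\lambda\Psi(t)}\left(\Psi(t)-\Psi(0)\right)^\xi$, rather than bare powers, constitute the natural family on which the $\Psi$-tempered operators act by shifting the exponent $\xi$.
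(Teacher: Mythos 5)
Your proposal is correct and follows essentially the same route as the paper, which presents this proposition as an immediate consequence of the conjugation relations (citing \cite[Proposition 3.13]{Fahad}): the exponential weight built into the test function cancels against the factor $e^{\lambda\Psi(t)}$ in \eqref{Tprod:I}--\eqref{Tprod:C}, reducing everything to the known action of the untempered $\Psi$-operators on powers of $\Psi(t)-\Psi(0)$ \cite[Properties 2.18 and 2.20]{Kilbas}. Your handling of the Caputo case, using $\xi>\lfloor\alpha\rfloor$ to guarantee that the exponent $\xi-n$ stays above $-1$ so the Caputo and Riemann--Liouville formulas agree, is exactly the needed justification.
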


The application of the $\Psi$-tempered operators to powers of $\Psi(t)-\Psi(0)$ can be given as follows.

\begin{proposition}\label{h1}
If $\alpha\in (0, \infty)$, $\lambda\in\mathbb{R}$ and $\xi\in (-1, \infty)$, then
\begin{multline*}
\prescript{T}{0}{\mathscr{I}}_{\Psi(t)}^{\alpha, \lambda}\left(\Psi(t)-\Psi(0) \right)^\xi= \frac{\Gamma(\xi+1)}{\Gamma(\xi+\alpha+1)}\left(\Psi(t)-\Psi(0) \right)^{\xi+\alpha}\,e^{-\lambda\left(\Psi(t)-\Psi(0) \right)} \\ \times\prescript{}{1}F_1\Big(\xi+1; \xi+\alpha+1; \lambda\left(\Psi(t)-\Psi(0) \right)\Big),
\end{multline*}
and
\begin{multline*}
\prescript{TR}{0}{\mathscr{D}}_{\Psi(t)}^{\alpha, \lambda}\left(\Psi(t)-\Psi(0) \right)^\xi \\ = \frac{\Gamma(\xi+1)}{\Gamma(\xi-\alpha+1)}\left(\Psi(t)-\Psi(0) \right)^{\xi-\alpha}\,e^{-\lambda\left(\Psi(t)-\Psi(0) \right)} \,\prescript{}{1}F_1\Big(\xi+1; \xi-\alpha+1; \lambda\left(\Psi(t)-\Psi(0) \right)\Big),
\end{multline*}
and if in addition $\xi>\lfloor\alpha\rfloor$, then
\begin{multline*}
\prescript{TC}{0}{\mathscr{D}}_{\Psi(t)}^{\alpha, \lambda}\left(\Psi(t)-\Psi(0) \right)^\xi \\ = \frac{\Gamma(\xi+1)}{\Gamma(\xi-\alpha+1)}\left(\Psi(t)-\Psi(0) \right)^{\xi-\alpha}\,e^{-\lambda\left(\Psi(t)-\Psi(0) \right)} \,\prescript{}{1}F_1\Big(\xi+1; \xi-\alpha+1; \lambda\left(\Psi(t)-\Psi(0) \right)\Big),
\end{multline*}
where $M(a;c;z)=\prescript{}{1}F_1(a;c;z)$ is the well-known Kummer's function or confluent hypergeometric function \cite[Chapter XVI]{whittaker-watson}.
\end{proposition}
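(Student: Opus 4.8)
The plan is to reduce each of the three identities to the already-known power rule for the plain $\Psi$-fractional operators, using the conjugation relations \eqref{Tprod:I}, \eqref{Tprod:R}, \eqref{Tprod:C} together with a termwise expansion of the exponential weight. I would carry out the integral case in full and then indicate that the two derivative cases follow by an identical argument. First I would apply \eqref{Tprod:I} to write
\[
\prescript{T}{0}{\mathscr{I}}_{\Psi(t)}^{\alpha,\lambda}(\Psi(t)-\Psi(0))^\xi = e^{-\lambda\Psi(t)}\,\prescript{}{0}{\mathscr{I}}_{\Psi(t)}^{\alpha}\Big(e^{\lambda\Psi(t)}(\Psi(t)-\Psi(0))^\xi\Big),
\]
and then, writing $e^{\lambda\Psi(t)}=e^{\lambda\Psi(0)}\,e^{\lambda(\Psi(t)-\Psi(0))}$ and expanding the second factor as its Maclaurin series, I would obtain
\[
e^{\lambda\Psi(t)}(\Psi(t)-\Psi(0))^\xi = e^{\lambda\Psi(0)}\sum_{m=0}^{\infty}\frac{\lambda^m}{m!}\,(\Psi(t)-\Psi(0))^{\xi+m}.
\]

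The key step is then to apply $\prescript{}{0}{\mathscr{I}}_{\Psi(t)}^{\alpha}$ termwise, invoking the power rule for the plain $\Psi$-Riemann--Liouville integral (which follows from conjugation with the classical rule, cf. \cite[Properties 2.18 and 2.20]{Kilbas}), namely $\prescript{}{0}{\mathscr{I}}_{\Psi(t)}^{\alpha}(\Psi(t)-\Psi(0))^{\xi+m} = \frac{\Gamma(\xi+m+1)}{\Gamma(\xi+m+\alpha+1)}(\Psi(t)-\Psi(0))^{\xi+m+\alpha}$. After multiplying back by $e^{-\lambda\Psi(t)}=e^{-\lambda\Psi(0)}e^{-\lambda(\Psi(t)-\Psi(0))}$, the factors $e^{\pm\lambda\Psi(0)}$ cancel, and pulling out $(\Psi(t)-\Psi(0))^{\xi+\alpha}$ leaves
\[
e^{-\lambda(\Psi(t)-\Psi(0))}(\Psi(t)-\Psi(0))^{\xi+\alpha}\sum_{m=0}^{\infty}\frac{\Gamma(\xi+m+1)}{\Gamma(\xi+m+\alpha+1)}\frac{\big(\lambda(\Psi(t)-\Psi(0))\big)^m}{m!}.
\]
It remains only to identify this sum with a confluent hypergeometric function. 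Using $\Gamma(\xi+m+1)=\Gamma(\xi+1)(\xi+1)_m$ and $\Gamma(\xi+m+\alpha+1)=\Gamma(\xi+\alpha+1)(\xi+\alpha+1)_m$, the series equals $\frac{\Gamma(\xi+1)}{\Gamma(\xi+\alpha+1)}\sum_{m}\frac{(\xi+1)_m}{(\xi+\alpha+1)_m}\frac{z^m}{m!}$ with $z=\lambda(\Psi(t)-\Psi(0))$, which is precisely $\frac{\Gamma(\xi+1)}{\Gamma(\xi+\alpha+1)}\,\prescript{}{1}F_1(\xi+1;\xi+\alpha+1;z)$, yielding the stated formula. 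The two derivative identities follow in the same way: one applies \eqref{Tprod:R} (respectively \eqref{Tprod:C}), expands the exponential, and uses termwise the power rule $\prescript{RL}{0}{\mathscr{D}}_{\Psi(t)}^{\alpha}(\Psi(t)-\Psi(0))^{\xi+m}=\frac{\Gamma(\xi+m+1)}{\Gamma(\xi+m-\alpha+1)}(\Psi(t)-\Psi(0))^{\xi+m-\alpha}$ (respectively its Caputo analogue), after which the identical Pochhammer manipulation produces $\prescript{}{1}F_1(\xi+1;\xi-\alpha+1;z)$.

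The main obstacle I anticipate is the justification of the termwise interchange of the infinite series with the fractional operator, which needs the dominated convergence theorem (or uniform convergence on $[0,b]$ of the exponential series against the absolutely integrable $\Psi$-kernel); this is routine, since $\Psi(t)-\Psi(0)$ is bounded on $[0,b]$ so the exponential series converges uniformly there. A second, case-specific point worth flagging is the Caputo identity: here the hypothesis $\xi>\lfloor\alpha\rfloor$ is exactly what guarantees that every exponent $\xi+m$ with $m\geq0$ exceeds $n-1=\lfloor\alpha\rfloor$, so that the Caputo power rule applies to each term and agrees with the Riemann--Liouville result. As an alternative route, the whole proposition could be derived from the series formulae \eqref{series:I}--\eqref{series:D}, but that approach produces the answer in the Kummer-transformed form $\prescript{}{1}F_1(\alpha;\xi+\alpha+1;-z)$ and then requires Kummer's transformation $\prescript{}{1}F_1(a;c;z)=e^z\,\prescript{}{1}F_1(c-a;c;-z)$ to recover the stated expression; the conjugation approach above is preferable because it produces the exponential prefactor $e^{-\lambda(\Psi(t)-\Psi(0))}$ directly, avoiding this extra manipulation.
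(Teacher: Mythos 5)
Your proof is correct and follows essentially the same route as the paper's own proof: conjugation via \eqref{Tprod:I}--\eqref{Tprod:C}, Maclaurin expansion of the exponential factor, termwise application of the $\Psi$-fractional power rule, and resummation of the resulting series into Kummer's function, with the derivative cases handled by the identical calculation. Your only additions --- the explicit justification of the termwise interchange and the observation that $\xi>\lfloor\alpha\rfloor$ ensures every exponent $\xi+m$ admits the Caputo power rule --- are points the paper leaves implicit, and your closing remark about the alternative series-formula route is exactly the content of the paper's Proposition \ref{hypergeom:altern} and the remark following it.
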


\begin{proof}
Using the expression for the $\Psi$-tempered integral as a conjugation of the $\Psi$-Riemann--Liouville integral, together with the action of the latter on functions of the form $\frac{(\Psi(t)-\Psi(0))^{\xi}}{\Gamma(\xi+1)}$, we have:
\begin{align*}
\prescript{T}{0}{\mathscr{I}}_{\Psi(t)}^{\alpha, \lambda}\left(\Psi(t)-\Psi(0) \right)^\xi
&= e^{-\lambda\left(\Psi(t)-\Psi(0) \right)}\prescript{}{0}{\mathscr{I}}_{\Psi(t)}^{\alpha}\left( e^{\lambda\left(\Psi(t)-\Psi(0) \right)}\left(\Psi(t)-\Psi(0) \right)^\xi\right) \\
&\hspace{-1cm}= e^{-\lambda\left(\Psi(t)-\Psi(0) \right)}\prescript{}{0}{\mathscr{I}}_{\Psi(t)}^{\alpha}\left( \sum_{k=0}^{\infty}\frac{(\lambda \left(\Psi(t)-\Psi(0) \right))^k}{k!}\left(\Psi(t)-\Psi(0) \right)^\xi\right) \\
&\hspace{-1cm}= e^{-\lambda\left(\Psi(t)-\Psi(0) \right)}\sum_{k=0}^{\infty}\, \frac{\lambda^k}{k!}\,\prescript{}{0}{\mathscr{I}}_{\Psi(t)}^{\alpha}\left(\Psi(t)-\Psi(0) \right)^{\xi+k} \\
&\hspace{-1cm}=e^{-\lambda\left(\Psi(t)-\Psi(0) \right)}\sum_{k=0}^{\infty}\, \frac{\lambda^k}{k!}\cdot\frac{\Gamma(\xi+k+1)}{\Gamma(\xi+k+\alpha+1)}\,\left(\Psi(t)-\Psi(0) \right)^{\xi+k+\alpha}\\
&\hspace{-1cm}=e^{-\lambda\left(\Psi(t)-\Psi(0) \right)} \left(\Psi(t)-\Psi(0) \right)^{\xi+\alpha} \frac{\Gamma(\xi+1)}{\Gamma(\xi+\alpha+1)}\\
&\times\left\lbrace \frac{\Gamma(\xi+\alpha+1)}{\Gamma(\xi+1)}\sum_{k=0}^{\infty}\,\,  \frac{\Gamma(\xi+1+k)}{\Gamma(\xi+\alpha+1+k)}\,\frac{\left( \lambda\left(\Psi(t)-\Psi(0) \right)\right) ^{k}}{k!}\right\rbrace \\
&\hspace{-1cm}=\frac{\Gamma(\xi+1)}{\Gamma(\xi+\alpha+1)}\left(\Psi(t)-\Psi(0) \right)^{\xi+\alpha} \,e^{-\lambda\left(\Psi(t)-\Psi(0) \right)}\\
&\hspace{2cm}\times\prescript{}{1}F_1\Big(\xi+1; \xi+\alpha+1; \lambda\left(\Psi(t)-\Psi(0) \right)\Big).
\end{align*}
This proves the first of the stated identities (for the $\Psi$-tempered integral). The identity for the $\Psi$-tempered derivative of Riemann--Liouville type can either be proved similarly by direct calculation, or deduced from the first one by analytic continuation \cite[Proposition 3.11]{Fahad}. For the $\Psi$-tempered derivative of Caputo type, we can repeat the above calculations with the fractional integral operator replaced by a Caputo-type derivative operator, requiring $\xi>n-1$ as usual for the Caputo-type derivative of a power function.
\end{proof}

\begin{proposition} \label{hypergeom:altern}
If $\alpha\in (0, \infty)$, $\lambda\in\mathbb{R}$ and $\xi\in (-1, \infty)$, then
\begin{multline*}
\prescript{T}{0}{\mathscr{I}}_{\Psi(t)}^{\alpha, \lambda}\left(\Psi(t)-\Psi(0) \right)^\xi=\frac{\Gamma(\xi+1)}{\Gamma(\xi+\alpha+1)} \left(\Psi(t)-\Psi(0) \right)^{\xi+\alpha} \\ \times\prescript{}{1}F_1\Big(\alpha; \xi+\alpha+1; -\lambda\left(\Psi(t)-\Psi(0) \right)\Big),
\end{multline*}
and
\begin{multline*}
\prescript{TR}{0}{\mathscr{D}}_{\Psi(t)}^{\alpha, \lambda}\left(\Psi(t)-\Psi(0) \right)^\xi=\frac{\Gamma(\xi+1)}{\Gamma(\xi-\alpha+1)} \left(\Psi(t)-\Psi(0) \right)^{\xi-\alpha} \\ \times\prescript{}{1}F_1\Big(-\alpha; \xi-\alpha+1; -\lambda\left(\Psi(t)-\Psi(0) \right)\Big).
\end{multline*}
\end{proposition}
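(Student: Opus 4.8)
The plan is to obtain both identities directly from Proposition~\ref{h1}, which already expresses these quantities as a power of $\Psi(t)-\Psi(0)$ times an exponential factor $e^{-\lambda(\Psi(t)-\Psi(0))}$ times a confluent hypergeometric function. The key observation is that this exponential factor is exactly what is needed to apply Kummer's first transformation
\[
\prescript{}{1}F_1(a;c;z)=e^{z}\,\prescript{}{1}F_1(c-a;c;-z),
\]
so that the proof reduces entirely to bookkeeping of the hypergeometric parameters. Writing $z=\lambda(\Psi(t)-\Psi(0))$ throughout, the factor $e^{-\lambda(\Psi(t)-\Psi(0))}=e^{-z}$ combines with the $\prescript{}{1}F_1$ of Proposition~\ref{h1} to produce a new $\prescript{}{1}F_1$ with negated argument.

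For the integral, I would take $a=\xi+1$ and $c=\xi+\alpha+1$ in the first identity of Proposition~\ref{h1}, so that $c-a=\alpha$. Kummer's transformation then gives
\[
e^{-z}\,\prescript{}{1}F_1(\xi+1;\xi+\alpha+1;z)=\prescript{}{1}F_1(\alpha;\xi+\alpha+1;-z),
\]
which is precisely the claimed expression. The Riemann--Liouville derivative follows in the same way from the second identity of Proposition~\ref{h1}: here $a=\xi+1$ and $c=\xi-\alpha+1$, so $c-a=-\alpha$, and Kummer's transformation converts the first parameter to $-\alpha$ while negating the argument, yielding the stated formula.

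An entirely self-contained alternative, which avoids quoting Kummer's transformation, is to expand via the series formulae \eqref{series:I} and \eqref{series:D}. Applying \eqref{series:I} term by term to $y(t)=(\Psi(t)-\Psi(0))^\xi$ and using the known action of the $\Psi$-Riemann--Liouville integral on power functions, one obtains a series whose $k$-th term, after factoring out $\frac{\Gamma(\xi+1)}{\Gamma(\xi+\alpha+1)}(\Psi(t)-\Psi(0))^{\xi+\alpha}$, is recognizable as $\frac{(\alpha)_k}{(\xi+\alpha+1)_k}\cdot\frac{(-z)^k}{k!}$; summing gives $\prescript{}{1}F_1(\alpha;\xi+\alpha+1;-z)$ directly. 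The derivative case is handled identically using \eqref{series:D}.

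There is no substantive obstacle here: the entire content is the single algebraic identity of Kummer's transformation, and the only care required is matching the parameters correctly --- in particular verifying that $c-a$ equals $\alpha$ in the integral case and $-\alpha$ in the derivative case, and that the sign of the argument flips. Since $\prescript{}{1}F_1$ is entire in its argument and Kummer's transformation is valid for all admissible parameter values, no restrictions beyond those already imposed in Proposition~\ref{h1} are introduced.
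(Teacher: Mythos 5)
Your proof is correct, but your primary route differs from the paper's own proof. The paper proves this proposition either by citing the tempered-calculus result \cite[Example 2.1]{Fernandez} and transporting it through the $Q_{\Psi}$-conjugation relation of Proposition \ref{Conjugationrelations}, or directly from the series formulae \eqref{series:I}--\eqref{series:D}; your backup argument is precisely this second option, and your computation of the $k$-th coefficient $\frac{(\alpha)_k}{(\xi+\alpha+1)_k}\cdot\frac{(-z)^k}{k!}$ is accurate. Your main argument --- applying Kummer's transformation $\prescript{}{1}F_1(a;c;z)=e^z\,\prescript{}{1}F_1(c-a;c;-z)$ to Proposition \ref{h1}, with the parameter checks $c-a=\alpha$ (integral case) and $c-a=-\alpha$ (derivative case) both done correctly --- is exactly the observation that the paper relegates to the remark immediately following the proposition, where the two propositions are declared equivalent via this very identity, rather than using it as the proof. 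Since Proposition \ref{h1} is established independently (by expanding the exponential and integrating term by term), there is no circularity in taking it as your starting point, so your route is legitimate: it makes Proposition \ref{hypergeom:altern} a pure corollary of Proposition \ref{h1} with no new fractional-calculus computation, only bookkeeping of hypergeometric parameters. The paper's conjugation route instead rests only on the already-known tempered ($\Psi(t)=t$) case, keeping the two propositions logically independent of each other --- which is precisely what allows the paper to record their equivalence afterwards as a remark rather than as the proof itself.
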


\begin{proof}
This follows immediately from \cite[Example 2.1]{Fernandez} together with the $\Psi$-conjugation relation, or it can be proved directly by using the series formulae \eqref{series:I}--\eqref{series:D}.
\end{proof}

\begin{rem}
The results of Proposition \ref{h1} and Proposition \ref{hypergeom:altern} are equivalent to each other, by the well-known identity $\prescript{}{1}F_1(a;c;z) =e^z\,\prescript{}{1}F_1(c-a; c; -z)$ \cite[Eq. (7)]{Webb}.
\end{rem}

\begin{rem}
As special cases of Proposition \ref{h1} and Proposition \ref{hypergeom:altern}, we can recover already known results: on fractional calculus with respect to functions \cite{Kilbas,Almeida}, by setting $\lambda=0$; on tempered fractional calculus \cite{Fernandez}, by setting $\Psi(t)=t$; on Riemann--Liouville fractional calculus \cite{Samko,Kilbas}, by setting $\lambda=0$ and $\Psi(t)=t$.
\end{rem}

\begin{proposition} \label{Coroll:1}
If $\alpha\in(0, \infty)$ and $\lambda\in\mathbb{R}$, then:
\begin{align*}
\prescript{T}{0}{\mathscr{I}}_{\Psi(t)}^{\alpha, \lambda}(1)= e^{-\lambda(\Psi(t)-\Psi(0))}(\Psi(t)-\Psi(0))^\alpha E_{1,\, 1+\alpha}\Big(\lambda(\Psi(t)-\Psi(0)\Big), \\
\prescript{TR}{0}{\mathscr{D}}_{\Psi(t)}^{\alpha, \lambda}(1)= e^{-\lambda(\Psi(t)-\Psi(0))}(\Psi(t)-\Psi(0))^{-\alpha} E_{1,\, 1-\alpha}\Big(\lambda(\Psi(t)-\Psi(0)\Big), \\
\prescript{TC}{0}{\mathscr{D}}_{\Psi(t)}^{\alpha, \lambda}(1)=\lambda^n e^{-\lambda(\Psi(t)-\Psi(0)} (\Psi(t)-\Psi(0)^{n-\alpha} E_{1,\, n+1-\alpha}\Big(\lambda(\Psi(t)-\Psi(0)\Big),
\end{align*}
where $n-1<\alpha<n$ and $E_{a,b}$ is the two-parameter Mittag-Leffler function \cite[Chapter 4]{Gorenflo}.
\end{proposition}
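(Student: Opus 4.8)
The plan is to obtain all three formulae by specialising the power-function results of Proposition \ref{h1} to the exponent $\xi=0$ (since the constant $1$ equals $(\Psi(t)-\Psi(0))^0$) and then rewriting each resulting confluent hypergeometric function as a two-parameter Mittag-Leffler function. The bridge between the two special functions is the elementary series identity
\[
\prescript{}{1}F_1(1; c; z)=\sum_{k=0}^{\infty}\frac{(1)_k}{(c)_k}\frac{z^k}{k!}=\sum_{k=0}^{\infty}\frac{z^k}{(c)_k}=\Gamma(c)\sum_{k=0}^{\infty}\frac{z^k}{\Gamma(c+k)}=\Gamma(c)\,E_{1,c}(z),
\]
which follows from $(1)_k=k!$ in the numerator and $(c)_k=\Gamma(c+k)/\Gamma(c)$ in the denominator. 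This single identity will convert the Kummer functions of Proposition \ref{h1} into the claimed Mittag-Leffler functions.

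For the integral, setting $\xi=0$ in the first identity of Proposition \ref{h1} gives $\frac{1}{\Gamma(\alpha+1)}(\Psi(t)-\Psi(0))^{\alpha}\,e^{-\lambda(\Psi(t)-\Psi(0))}\,\prescript{}{1}F_1\big(1;\alpha+1;\lambda(\Psi(t)-\Psi(0))\big)$; applying the identity above with $c=\alpha+1$ cancels the prefactor $\Gamma(\alpha+1)$ and produces exactly the first stated formula, with $E_{1,\,1+\alpha}$. The Riemann--Liouville derivative formula follows identically from the second identity of Proposition \ref{h1} with $\xi=0$ and $c=1-\alpha$, the factor $\Gamma(1-\alpha)$ again cancelling to leave $E_{1,\,1-\alpha}$.

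The Caputo case requires a separate argument, and this is the one genuinely delicate point: the Caputo clause of Proposition \ref{h1} is valid only for $\xi>\lfloor\alpha\rfloor$, so it cannot legitimately be specialised at $\xi=0$. Instead I would return to the definition. Writing $D_{\Psi}=\frac{1}{\Psi'(t)}\cdot\frac{\mathrm{d}}{\mathrm{d}t}$, a short induction shows that $(D_{\Psi}+\lambda)^n(1)=\lambda^n$: indeed $D_{\Psi}$ annihilates every constant, so $(D_{\Psi}+\lambda)$ sends the constant $\lambda^{k}$ to $\lambda^{k+1}$, and $n$ iterations carry $1$ to $\lambda^n$. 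Consequently
\[
\prescript{TC}{0}{\mathscr{D}}_{\Psi(t)}^{\alpha,\lambda}(1)=\prescript{T}{0}{\mathscr{I}}_{\Psi(t)}^{n-\alpha,\lambda}\big[(D_{\Psi}+\lambda)^n(1)\big]=\lambda^n\,\prescript{T}{0}{\mathscr{I}}_{\Psi(t)}^{n-\alpha,\lambda}(1),
\]
and substituting the integral formula already proved, with $\alpha$ replaced by $n-\alpha$, immediately yields the third stated formula, the Mittag-Leffler parameter $E_{1,\,n+1-\alpha}$ and the power $(\Psi(t)-\Psi(0))^{n-\alpha}$ emerging precisely as claimed.

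The only real obstacle is recognising that the Caputo identity is \emph{not} a direct specialisation of Proposition \ref{h1} and must instead be reduced to the integral case; once the computation $(D_{\Psi}+\lambda)^n(1)=\lambda^n$ is in hand, the remainder is routine. Everything else is the bookkeeping of the series identity relating $\prescript{}{1}F_1(1;c;\cdot)$ to $E_{1,c}$.
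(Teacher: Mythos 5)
Your proposal is correct. For the first two formulae (integral and Riemann--Liouville derivative), your argument is exactly the paper's: set $\xi=0$ in Proposition \ref{h1} and invoke the series identity $\prescript{}{1}F_1(1;c;z)=\Gamma(c)E_{1,c}(z)$, with the gamma prefactors cancelling. For the Caputo formula, however, you take a genuinely different and arguably cleaner route. The paper works through the conjugation relation: it writes $\prescript{TC}{0}{\mathscr{D}}_{\Psi(t)}^{\alpha,\lambda}(1)=e^{-\lambda(\Psi(t)-\Psi(0))}\sum_{k=0}^{\infty}\frac{\lambda^k}{k!}\,\prescript{C}{0}{\mathscr{D}}_{\Psi(t)}^{\alpha}\left(\Psi(t)-\Psi(0)\right)^{k}$, notes that the terms with $k<n$ vanish (the $n$-th $\Psi$-derivative of a power of order below $n$ is zero), and reindexes the surviving series into the Mittag-Leffler function. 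Your argument instead goes straight to the structural definition $\prescript{TC}{0}{\mathscr{D}}_{\Psi(t)}^{\alpha,\lambda}=\prescript{T}{0}{\mathscr{I}}_{\Psi(t)}^{n-\alpha,\lambda}\,(D_{\Psi}+\lambda)^n$, computes $(D_{\Psi}+\lambda)^n(1)=\lambda^n$, and then recycles the already-proved integral formula at order $n-\alpha$. Both are valid and give the same expression; what your route buys is that it avoids the term-by-term application of the Caputo derivative to an infinite series (an interchange the paper performs without explicit justification), replacing it by linearity of the integral and an elementary finite induction. What the paper's route buys is uniformity: the same series mechanism proves all of Proposition \ref{h1} and this corollary, so the Caputo case reads as one more instance of an established computation. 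You are also right, and it is worth emphasising, that the Caputo clause of Proposition \ref{h1} cannot simply be specialised at $\xi=0$ because of the hypothesis $\xi>\lfloor\alpha\rfloor$; the paper's proof implicitly concedes the same point by treating the Caputo case separately.
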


\begin{proof}
Setting $\xi=0$ in the result of Proposition \ref{h1}, for the first two results (Riemann--Liouville type) it suffices to note that
\[
\prescript{}{1}F_1(1;c;z)=\sum_{n=0}^{\infty}\frac{\Gamma(1+n)\Gamma(c)}{\Gamma(1)\Gamma(c+n)}\cdot\frac{z^n}{n!}=\sum_{n=0}^{\infty}\frac{z^n\Gamma(c)}{\Gamma(c+n)}=\Gamma(c)E_{1,c}(z),
\]
for any $z\in\mathbb{C}$ and $c\not\in\mathbb{Z}^-_0$ (the restriction on $c$ can be removed upon dividing by $\Gamma(c)$).

The proof of the third result (Caputo type) is a little more complicated, and requires the fact that the $n^{th}$ derivative of a power function of order $k<n$ is zero when $k,n\in\N$:
\begin{align*}
\prescript{TC}{0}{\mathscr{D}}_{\Psi(t)}^{\alpha, \lambda}\,(1)&= e^{-\lambda\left(\Psi(t)-\Psi(0) \right)}\sum_{k=0}^{\infty}\, \frac{\lambda^k}{k!}\,\prescript{C}{0}{\mathscr{D}}_{\Psi(t)}^{\alpha}\left(\Psi(t)-\Psi(0) \right)^{k} \\
&=e^{-\lambda\left(\Psi(t)-\Psi(0) \right)}\sum_{k=n}^{\infty}\, \frac{\lambda^k}{\Gamma(k-\alpha+1)}\,\left(\Psi(t)-\Psi(0) \right)^{k-\alpha}\\
&=e^{-\lambda\left(\Psi(t)-\Psi(0) \right)}\sum_{k=0}^{\infty}\, \frac{\lambda^{k+n}}{\Gamma(k+n-\alpha+1)}\,\left(\Psi(t)-\Psi(0) \right)^{k+n-\alpha},
\end{align*}
where the first $n$ terms of the series disappeared due to the above-mentioned fact.
\end{proof}  

\begin{rem}
It is interesting to observe that the $\Psi$-tempered derivative of Caputo type does not give zero when applied to a constant function, although one of the key properties of the classical Caputo derivative is that it sends constants to zero. The reason is that the function $y(t)=1$ is not playing the role of a constant (zeroth power function) in $\Psi$-tempered fractional calculus: in this setting, the function corresponding to a constant would be $e^{-\lambda\Psi(t)}$, and this function is indeed mapped to zero by any Caputo-type $\Psi$-tempered derivative, as proved in Theorem \ref{Thm:kernelC} below.
\end{rem}

\subsection{Composition relations}

We have already seen above some composition properties of the $\Psi$-tempered fractional integrals and derivatives, namely every case where there is an exact semigroup property, the composition of two such operators being precisely another one. From \cite[Theorem 3.15]{Fahad} we also have the following inversion property, valid for $\alpha\in(0,\infty)$ and $n-1\leq\alpha<n\in\mathbb{N}$ and $\lambda\in\mathbb{R}$ and any function $y$ such that this expression exists:
\begin{multline}
\label{ID}
\prescript{T}{0}{\mathscr{I}}_{\Psi(t)}^{\alpha, \lambda}\,\prescript{TR}{0}{\mathscr{D}}_{\Psi(t)}^{\alpha, \lambda}\,y(t) \\
=y(t)-e^{-\lambda\Psi(t)}\sum_{k=1}^{n}\frac{(\Psi(t)-\Psi(0))^{\alpha-k}}{\Gamma(\alpha-k+1)}\left[\left(\frac{1}{{\Psi}^{'}(t)}\cdot\frac{\mathrm{d}}{\mathrm{d}t}\right)^{n-k}\prescript{}{0}{\mathscr{I}}_{\Psi(t)}^{n-\alpha} \left( e^{\lambda\Psi(t)}y(t)\right) \right]_{t=0}.
\end{multline}
As a special case, putting $\alpha=m\in\mathbb{N}$, we obtain the following inversion property for non-fractional $\Psi$-tempered integrals and derivatives:
\begin{equation}\label{861}
\prescript{T}{0}{\mathscr{I}}_{\Psi(t)}^{m, \lambda} \left[ \prescript{T}{}{\mathscr{D}}_{\Psi(t)}^{m, \lambda}\,y(t)\right]= y(t)- e^{-\lambda(\Psi(t)-\Psi(0))} \sum_{k=0}^{m-1}\frac{(\Psi(t)-\Psi(0))^k}{k!}\left[\prescript{T}{}{\mathscr{D}}_{\Psi(t)}^{k, \lambda}\,y(t) \right]_{t=0},
\end{equation}
obtained by noticing that $n=\alpha+1=m+1$ in this case but the $(m+1-k)$th derivative of the $1$st integral in the square brackets is simply the $(m-k)$th derivative, and then multiplying and dividing by the constant $e^{\lambda\Psi(0)}$ and replacing $k$ by $m-k$ in the sum to obtain \eqref{861}.

We continue with some more results on compositions of $\Psi$-tempered operators in various cases where a direct semigroup property is not valid.

\begin{proposition}\label{58}
If $\alpha\in(0, \infty)$, $\lambda\in\mathbb{R}$ and $m\in\N$, then for any $y\in AC_{\Psi}^{m+n}[0, b]$ where $n-1<\alpha<n\in\N$, we have:
\begin{multline*}
\prescript{TR}{0}{\mathscr{D}}_{\Psi(t)}^{\alpha, \lambda}\,\left[ \prescript{T}{}{\mathscr{D}}_{\Psi(t)}^{m, \lambda} y(t)\right] =\prescript{TR}{0}{\mathscr{D}}_{\Psi(t)}^{\alpha+m, \lambda}y(t) \\
- e^{-\lambda(\Psi(t)-\Psi(0))} \sum_{k=0}^{m-1}\frac{(\Psi(t)-\Psi(0))^{k-\alpha-m}}{\Gamma(k+1-\alpha-m)}\left[\prescript{T}{}{\mathscr{D}}_{\Psi(t)}^{k, \lambda} y(t) \right]_{t=0}.
\end{multline*}
\end{proposition}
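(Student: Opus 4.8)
The plan is to reduce everything to the classical $\Psi$-Riemann--Liouville setting by means of the product relations of Proposition~\ref{Conjugationrelations}, exploiting the fact that the exponential tempering factors telescope. First I would apply \eqref{Tprod:R} to the outer operator, writing
\[
\prescript{TR}{0}{\mathscr{D}}_{\Psi(t)}^{\alpha,\lambda}\left[\prescript{T}{}{\mathscr{D}}_{\Psi(t)}^{m,\lambda}y(t)\right]=e^{-\lambda\Psi(t)}\,\prescript{RL}{0}{\mathscr{D}}_{\Psi(t)}^{\alpha}\left(e^{\lambda\Psi(t)}\,\prescript{T}{}{\mathscr{D}}_{\Psi(t)}^{m,\lambda}y(t)\right).
\]
Since $\left(\frac{1}{\Psi'}\frac{d}{dt}+\lambda\right)=e^{-\lambda\Psi}\frac{1}{\Psi'}\frac{d}{dt}\left(e^{\lambda\Psi}\,\cdot\right)$ by a one-line computation, iterating gives the integer-order analogue of \eqref{Tprod:R}, namely $\prescript{T}{}{\mathscr{D}}_{\Psi(t)}^{m,\lambda}y=e^{-\lambda\Psi}\left(\frac{1}{\Psi'}\frac{d}{dt}\right)^m\!\left(e^{\lambda\Psi}y\right)$. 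Hence the two exponential factors cancel, and with the abbreviation $z:=e^{\lambda\Psi}y$ the left-hand side collapses to $e^{-\lambda\Psi}\,\prescript{RL}{0}{\mathscr{D}}_{\Psi(t)}^{\alpha}\!\left[\prescript{RL}{0}{\mathscr{D}}_{\Psi(t)}^{m}z\right]$, a purely $\Psi$-Riemann--Liouville composition applied to $z$ (recall that for integer order $\left(\frac{1}{\Psi'}\frac{d}{dt}\right)^m=\prescript{RL}{0}{\mathscr{D}}_{\Psi(t)}^{m}$).

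The core of the argument is therefore a single $\lambda$-free lemma: the $\Psi$-Riemann--Liouville composition identity
\[
\prescript{RL}{0}{\mathscr{D}}_{\Psi(t)}^{\alpha}\left[\prescript{RL}{0}{\mathscr{D}}_{\Psi(t)}^{m}z(t)\right]=\prescript{RL}{0}{\mathscr{D}}_{\Psi(t)}^{\alpha+m}z(t)-\sum_{k=0}^{m-1}\frac{(\Psi(t)-\Psi(0))^{k-\alpha-m}}{\Gamma(k-\alpha-m+1)}\left[\prescript{RL}{0}{\mathscr{D}}_{\Psi(t)}^{k}z(t)\right]_{t=0}.
\]
I would obtain this by $Q_\Psi$-conjugation from its classical ($\Psi(t)=t$) counterpart, using $\prescript{RL}{0}{\mathscr{D}}_{\Psi(t)}^{\alpha}=Q_\Psi\circ\prescript{RL}{\Psi(0)}{\mathscr{D}}_{t}^{\alpha}\circ Q_\Psi^{-1}$ together with $\left(\frac{1}{\Psi'}\frac{d}{dt}\right)^k=Q_\Psi\circ\frac{d^k}{dt^k}\circ Q_\Psi^{-1}$; under these conjugations the classical powers $t^{k-\alpha-m}$ become $(\Psi(t)-\Psi(0))^{k-\alpha-m}$ and the classical boundary values $z^{(k)}(\Psi(0))$ become $\left[\prescript{RL}{0}{\mathscr{D}}_{\Psi(t)}^{k}z\right]_{t=0}$. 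The classical identity itself is the standard formula for $\prescript{RL}{0}{\mathscr{D}}^{\alpha}$ acting on an $m$-th derivative, quoted from Kilbas et al.\ \cite{Kilbas}; if proved directly, one writes $\prescript{RL}{0}{\mathscr{D}}^{\alpha}(z^{(m)})=\frac{d^n}{dt^n}\prescript{}{0}{\mathscr{I}}^{n-\alpha}(z^{(m)})$ and performs $m$ successive integrations by parts, the finitely many surviving endpoint terms producing exactly the displayed sum.

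Finally I would reinstate the tempering. The leading term $e^{-\lambda\Psi}\,\prescript{RL}{0}{\mathscr{D}}_{\Psi(t)}^{\alpha+m}(e^{\lambda\Psi}y)$ is, by \eqref{Tprod:R} once more, precisely $\prescript{TR}{0}{\mathscr{D}}_{\Psi(t)}^{\alpha+m,\lambda}y(t)$. For each boundary term I would use $\prescript{RL}{0}{\mathscr{D}}_{\Psi(t)}^{k}z=\left(\frac{1}{\Psi'}\frac{d}{dt}\right)^k(e^{\lambda\Psi}y)=e^{\lambda\Psi}\,\prescript{T}{}{\mathscr{D}}_{\Psi(t)}^{k,\lambda}y$, so that $\left[\prescript{RL}{0}{\mathscr{D}}_{\Psi(t)}^{k}z\right]_{t=0}=e^{\lambda\Psi(0)}\left[\prescript{T}{}{\mathscr{D}}_{\Psi(t)}^{k,\lambda}y\right]_{t=0}$; combining the outer $e^{-\lambda\Psi(t)}$ with this $e^{\lambda\Psi(0)}$ yields the factor $e^{-\lambda(\Psi(t)-\Psi(0))}$ appearing in the statement. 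The main obstacle I anticipate is purely bookkeeping rather than conceptual: keeping the base point of the conjugated classical operator at $\Psi(0)$ (so that the powers come out as $\Psi(t)-\Psi(0)$ and not $\Psi(t)$), tracking exactly which endpoint terms survive the repeated integration by parts, and verifying that the hypothesis $y\in AC_\Psi^{m+n}[0,b]$ is precisely what makes $z=e^{\lambda\Psi}y$ lie in $AC^{m+n}$ after conjugation, which is what both $\prescript{RL}{0}{\mathscr{D}}_{\Psi(t)}^{\alpha+m}z$ and the endpoint $\Psi$-derivatives up to order $m-1$ require for the formula to be meaningful.
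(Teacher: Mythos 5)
Your proposal is correct, but it takes a genuinely different route from the paper. The paper's proof never leaves the $\Psi$-tempered framework: it writes $\prescript{TR}{0}{\mathscr{D}}_{\Psi(t)}^{\alpha,\lambda}\left[\prescript{T}{}{\mathscr{D}}_{\Psi(t)}^{m,\lambda}y\right]=\prescript{TR}{0}{\mathscr{D}}_{\Psi(t)}^{\alpha+m,\lambda}\,\prescript{T}{0}{\mathscr{I}}_{\Psi(t)}^{m,\lambda}\left[\prescript{T}{}{\mathscr{D}}_{\Psi(t)}^{m,\lambda}y\right]$ using the semigroup relation \eqref{DI}, expands $\prescript{T}{0}{\mathscr{I}}_{\Psi(t)}^{m,\lambda}\,\prescript{T}{}{\mathscr{D}}_{\Psi(t)}^{m,\lambda}y$ via the integer-order inversion formula \eqref{861} (so $y$ minus a tempered Taylor polynomial appears), and then kills each polynomial term by applying Proposition \ref{Prop:specfunc1} with $\xi=k$ and order $\alpha+m$, which directly produces the coefficients $\frac{(\Psi(t)-\Psi(0))^{k-\alpha-m}}{\Gamma(k-\alpha-m+1)}$. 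Your proof instead strips the tempering and the $\Psi$-substitution by conjugation, reduces everything to the classical Riemann--Liouville composition identity for $\prescript{RL}{a}{\mathscr{D}}_{t}^{\alpha}\bigl(z^{(m)}\bigr)$ (a standard formula, e.g.\ in Podlubny or Kilbas et al., not restated in this paper), and then dresses the answer back up, checking that the factor $e^{\lambda\Psi(0)}$ from the boundary values combines with the outer $e^{-\lambda\Psi(t)}$ to give $e^{-\lambda(\Psi(t)-\Psi(0))}$. Both arguments are sound; I verified that your conjugation bookkeeping (in particular $w^{(k)}(\Psi(0))=\bigl[(\frac{1}{\Psi'}\frac{\mathrm{d}}{\mathrm{d}t})^k z\bigr]_{t=0}$ and the exponential cancellations) comes out exactly right. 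The trade-off: the paper's route is self-contained relative to its own toolkit (\eqref{DI}, \eqref{861}, Proposition \ref{Prop:specfunc1}) and needs no external composition formula, whereas your route imports one classical identity but makes transparent that the proposition is nothing more than that identity conjugated by $Q_{\Psi}$ and $M_{\exp(\lambda t)}$ --- a reduction that would transfer verbatim to any operator family defined by such conjugations, and which is in fact the same strategy the paper itself uses for Theorem \ref{Thm:RLCrel} and Theorem \ref{uhr}.
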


\begin{proof}
Using \eqref{DI} and then \eqref{861} and then Proposition \eqref{Prop:specfunc1}, we have
\begin{align*}
\prescript{TR}{0}{\mathscr{D}}_{\Psi(t)}^{\alpha, \lambda}\,\left[ \prescript{T}{}{\mathscr{D}}_{\Psi(t)}^{m, \lambda} y(t)\right] &= \prescript{TR}{0}{\mathscr{D}}_{\Psi(t)}^{\alpha+m, \lambda}\,\prescript{T}{0}{\mathscr{I}}_{\Psi(t)}^{m, \lambda}\left[ \prescript{T}{}{\mathscr{D}}_{\Psi(t)}^{m, \lambda} y(t)\right] \\
&\hspace{-2cm}=\prescript{TR}{0}{\mathscr{D}}_{\Psi(t)}^{\alpha+m, \lambda}\left[y(t)- e^{-\lambda(\Psi(t)-\Psi(0))} \sum_{k=0}^{m-1}\frac{(\Psi(t)-\Psi(0))^k}{k!}\left[\prescript{T}{}{\mathscr{D}}_{\Psi(t)}^{k, \lambda}\,y(t) \right]_{t=0}\right] \\
&\hspace{-2cm}=\prescript{TR}{0}{\mathscr{D}}_{\Psi(t)}^{\alpha+m, \lambda}y(t) \\ &- \sum_{k=0}^{m-1}\left[\prescript{T}{}{\mathscr{D}}_{\Psi(t)}^{k, \lambda}\,y(t) \right]_{t=0}\cdot\prescript{TR}{0}{\mathscr{D}}_{\Psi(t)}^{\alpha+m, \lambda}\left[e^{-\lambda(\Psi(t)-\Psi(0))}\frac{(\Psi(t)-\Psi(0))^k}{k!}\right]\\
&\hspace{-2cm}=\prescript{TR}{0}{\mathscr{D}}_{\Psi(t)}^{\alpha+m, \lambda}y(t)- \sum_{k=0}^{m-1}\left[\prescript{T}{}{\mathscr{D}}_{\Psi(t)}^{k, \lambda}\,y(t) \right]_{t=0}\cdot e^{-\lambda(\Psi(t)-\Psi(0))}\frac{(\Psi(t)-\Psi(0))^{k-\alpha-m}}{\Gamma(k-\alpha-m+1)},
\end{align*}
which is the claimed result.
\end{proof}

\begin{theorem} \label{Thm:RLCrel}
If $\alpha\in(0,\infty)$, $\lambda\in\mathbb{R}$ and $n-1<\alpha<n\in\N$, then any $y\in AC_{\Psi}^n[0, b]$ satisfies the following relationship between its $\Psi$-tempered derivatives of Riemann--Liouville and Caputo types:
\begin{multline*}
\prescript{TC}{0}{\mathscr{D}}_{\Psi(t)}^{\alpha, \lambda} y(t)=\prescript{TR}{0}{\mathscr{D}}_{\Psi(t)}^{\alpha, \lambda} \left\lbrace  y(t) - \sum_{k=0}^{n-1} \frac{e^{-\lambda(\Psi(t)-\Psi(0))}(\Psi(t)-\Psi(0))^k}{k!}\left[\prescript{T}{}{\mathscr{D}}_{\Psi(t)}^{k,\lambda}y(t)\right]_{t=0}\right\rbrace \\
=\prescript{TR}{0}{\mathscr{D}}_{\Psi(t)}^{\alpha, \lambda} \left\lbrace  y(t) - \sum_{k=0}^{n-1} \frac{e^{-\lambda\Psi(t)}(\Psi(t)-\Psi(0))^k}{k!}\left[\left(\frac{1}{\Psi'(t)}\cdot\frac{\mathrm{d}}{\mathrm{d}t} \right)^k\left(e^{\lambda\Psi(t)} y(t) \right)\right]_{t=0}\right\rbrace.
\end{multline*}
\end{theorem}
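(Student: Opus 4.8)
The plan is to deduce this relationship from two composition identities already recorded in Section~\ref{preliminaries}, namely the integer-order inversion property \eqref{861} and the semigroup property \eqref{DI}, so that no fresh integration is needed. The only supplementary ingredient is the factorisation of the integer-order tempered $\Psi$-derivative, $\prescript{T}{}{\mathscr{D}}_{\Psi(t)}^{k,\lambda}y(t)=e^{-\lambda\Psi(t)}\left(\frac{1}{\Psi'(t)}\cdot\frac{\mathrm{d}}{\mathrm{d}t}\right)^k\left(e^{\lambda\Psi(t)}y(t)\right)$, which is the $k$-fold iterate of the first-order identity $\left(\frac{1}{\Psi'(t)}\cdot\frac{\mathrm{d}}{\mathrm{d}t}+\lambda\right)f=e^{-\lambda\Psi(t)}\cdot\frac{1}{\Psi'(t)}\cdot\frac{\mathrm{d}}{\mathrm{d}t}\left(e^{\lambda\Psi(t)}f\right)$ and was already exploited in the proof of Theorem~\ref{t21}.

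First I would write the Caputo-type operator in its defining form, $\prescript{TC}{0}{\mathscr{D}}_{\Psi(t)}^{\alpha,\lambda}y(t)=\prescript{T}{0}{\mathscr{I}}_{\Psi(t)}^{n-\alpha,\lambda}\,\prescript{T}{}{\mathscr{D}}_{\Psi(t)}^{n,\lambda}y(t)$, and abbreviate $g(t)=\prescript{T}{}{\mathscr{D}}_{\Psi(t)}^{n,\lambda}y(t)$. Since $n-1<\alpha<n$ forces $\alpha<n$, the first case of the semigroup property \eqref{DI} (read with derivative order $\alpha$ and integral order $\beta=n$) gives $\prescript{T}{0}{\mathscr{I}}_{\Psi(t)}^{n-\alpha,\lambda}\,g(t)=\prescript{TR}{0}{\mathscr{D}}_{\Psi(t)}^{\alpha,\lambda}\left[\prescript{T}{0}{\mathscr{I}}_{\Psi(t)}^{n,\lambda}\,g(t)\right]$. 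I would then recognise the inner expression $\prescript{T}{0}{\mathscr{I}}_{\Psi(t)}^{n,\lambda}\,\prescript{T}{}{\mathscr{D}}_{\Psi(t)}^{n,\lambda}y(t)$ as exactly the left-hand side of the inversion identity \eqref{861} with $m=n$, so it equals $y(t)$ minus precisely the finite sum written in the theorem. Pulling $\prescript{TR}{0}{\mathscr{D}}_{\Psi(t)}^{\alpha,\lambda}$ back outside then delivers the first claimed equality at once.

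For the second equality, I would substitute the factorisation of $\prescript{T}{}{\mathscr{D}}_{\Psi(t)}^{k,\lambda}y(t)$ recalled above, evaluated at $t=0$; this produces a factor $e^{-\lambda\Psi(0)}$ which combines with the prefactor $e^{-\lambda(\Psi(t)-\Psi(0))}$ inside the sum to give exactly $e^{-\lambda\Psi(t)}$. Thus each summand of the first form is converted, term by term, into the corresponding summand of the second form; since $\prescript{TR}{0}{\mathscr{D}}_{\Psi(t)}^{\alpha,\lambda}$ is applied to both bracketed expressions, the second equality is a purely algebraic rearrangement of the exponential prefactors.

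There is no serious analytical obstacle here, the argument being a short chain of identities already in hand. The only point deserving care is checking the domain hypotheses under which \eqref{861} and \eqref{DI} were stated: membership $y\in AC_{\Psi}^n[0,b]$ ensures that $g=\prescript{T}{}{\mathscr{D}}_{\Psi(t)}^{n,\lambda}y$ is absolutely $\Psi$-integrable, which is what the first case of \eqref{DI} requires, and that the boundary values $\left[\prescript{T}{}{\mathscr{D}}_{\Psi(t)}^{k,\lambda}y(t)\right]_{t=0}$ occurring in \eqref{861} are well defined for $0\le k\le n-1$.
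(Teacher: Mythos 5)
Your proof is correct, but it takes a genuinely different route from the paper's. You argue entirely inside the $\Psi$-tempered calculus: writing $\prescript{TC}{0}{\mathscr{D}}_{\Psi(t)}^{\alpha,\lambda}y=\prescript{T}{0}{\mathscr{I}}_{\Psi(t)}^{n-\alpha,\lambda}\,g$ with $g=\prescript{T}{}{\mathscr{D}}_{\Psi(t)}^{n,\lambda}y$, then using the first case of \eqref{DI} (with $\beta=n>\alpha$) to re-express this as $\prescript{TR}{0}{\mathscr{D}}_{\Psi(t)}^{\alpha,\lambda}\big[\prescript{T}{0}{\mathscr{I}}_{\Psi(t)}^{n,\lambda}g\big]$, and finally the inversion identity \eqref{861} with $m=n$ to replace the inner expression by $y$ minus the tempered Taylor sum. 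This is the classical ``Caputo equals Riemann--Liouville of the function minus its Taylor polynomial'' argument, transplanted directly into the tempered setting. The paper instead proves the result by conjugation transfer: it applies the known relation for untempered $\Psi$-operators \cite[Theorem 3]{Almeida} to the auxiliary function $e^{\lambda(\Psi(t)-\Psi(0))}y(t)$, obtaining \eqref{815a}, and then sandwiches with the exponential factors via \eqref{Tprod:C} and \eqref{Tprod:R}. What each approach buys: the paper's proof is short given the external citation and showcases the conjugation philosophy that runs through the whole paper (known results for $\Psi$-calculus lift for free to the tempered version); your proof is self-contained, using only identities \eqref{DI} and \eqref{861} already recorded in Section \ref{preliminaries}, so it does not lean on Almeida's theorem at all. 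Your handling of the second displayed equality (absorbing $e^{-\lambda\Psi(0)}$ from the factorisation $\prescript{T}{}{\mathscr{D}}_{\Psi(t)}^{k,\lambda}y=e^{-\lambda\Psi(t)}\left(\frac{1}{\Psi'(t)}\cdot\frac{\mathrm{d}}{\mathrm{d}t}\right)^k\left(e^{\lambda\Psi(t)}y(t)\right)$ into the prefactor) matches the algebra implicit in the paper's final step, and your closing remark about checking that $g$ is absolutely $\Psi$-integrable and that the boundary values exist for $y\in AC_{\Psi}^n[0,b]$ addresses the only genuine hypothesis-verification issue in this route.
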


\begin{proof}
Applying \cite[Theorem 3]{Almeida} (which is itself an immediate consequence of \cite[Theorem 3.1]{Diethelm} together with the $\Psi$-convolution relation) with $f(t)=e^{\lambda(\Psi(t)-\Psi(0))} y(t)$, we obtain
\begin{multline}\label{815a}
\prescript{C}{0}{\mathscr{D}}_{\Psi(t)}^{\alpha} \left(e^{\lambda(\Psi(t)-\Psi(0))} y(t)\right)=\prescript{RL}{0}{\mathscr{D}}_{\Psi(t)}^{\alpha} \Bigg\lbrace e^{\lambda(\Psi(t)-\Psi(0))} y(t) \\ -\sum_{k=0}^{n-1} \frac{(\Psi(t)-\Psi(0))^k}{k!}\left[ \left( \frac{1}{\Psi'(t)}\cdot\frac{\mathrm{d}}{\mathrm{d}t} \right)^k\left(e^{\lambda(\Psi(t)-\Psi(0))} y(t) \right)\right]_{t=0}\Bigg\rbrace.
\end{multline}
Therefore, using the expression \eqref{Tprod:C} for the $\Psi$-tempered derivative,
\begin{align*}
\prescript{TC}{0}{\mathscr{D}}_{\Psi(t)}^{\alpha, \lambda} y(t)&=e^{-\lambda(\Psi(t)-\Psi(0))}\prescript{C}{0}{\mathscr{D}}_{\Psi(t)}^{\alpha} \left(e^{\lambda(\Psi(t)-\Psi(0))} y(t)\right) \\
&=e^{-\lambda(\Psi(t)-\Psi(0))}\prescript{RL}{0}{\mathscr{D}}_{\Psi(t)}^{\alpha} \Bigg\lbrace e^{\lambda(\Psi(t)-\Psi(0))} y(t) \\ &\hspace{3cm}-\sum_{k=0}^{n-1} \frac{(\Psi(t)-\Psi(0))^k}{k!}\left[\prescript{T}{}{\mathscr{D}}_{\Psi(t)}^{k,\lambda}y(t)\right]_{t=0}\Bigg\rbrace \\
&=e^{-\lambda(\Psi(t)-\Psi(0))}\prescript{RL}{0}{\mathscr{D}}_{\Psi(t)}^{\alpha}\Bigg\lbrace e^{\lambda(\Psi(t)-\Psi(0))}\Bigg[ y(t) \\ &\hspace{2cm}-\sum_{k=0}^{n-1}e^{-\lambda(\Psi(t)-\Psi(0))}\frac{(\Psi(t)-\Psi(0))^k}{k!}\left[\prescript{T}{}{\mathscr{D}}_{\Psi(t)}^{k,\lambda}y(t)\right]_{t=0}\Bigg]\Bigg\rbrace,
\end{align*}
and the result follows immediately by \eqref{Tprod:R}.
\end{proof}

\begin{rem}
As special cases of Theorem \ref{Thm:RLCrel}, we can recover already known results: on fractional calculus with respect to functions \cite[Theorem 3]{Almeida}, by setting $\lambda=0$; on tempered fractional calculus \cite[Proposition 1]{Li}, by setting $\Psi(t)=t$; on the original Riemann--Liouville and Caputo fractional calculus \cite[Theorem 3.1]{Diethelm}, by setting $\lambda=0$ and $\Psi(t)=t$.
\end{rem}

\begin{theorem}
If $\alpha, \beta\in(0, \infty)$ and $\lambda\in\mathbb{R}$ such that $\lfloor\beta\rfloor<\lfloor\alpha\rfloor$ (this is a stronger version of $\beta<\alpha$), then
\[
\prescript{TC}{0}{\mathscr{D}}_{\Psi(t)}^{\beta,\lambda}\prescript{T}{0}{\mathscr{I}}_{\Psi(t)}^{\alpha, \lambda}y(t)=\prescript{T}{0}{\mathscr{I}}_{\Psi(t)}^{\alpha-\beta, \lambda}y(t),\qquad t\in(0, b].
\]
\end{theorem}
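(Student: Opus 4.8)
The plan is to reduce the composition to two applications of semigroup-type identities already established, exploiting the factorisation of the Caputo-type operator built into Definition \ref{Def:Psitempered}. Write $m=\lceil\beta\rceil$, so that in the generic case $m-1<\beta<m$ and $\lfloor\beta\rfloor=m-1$, and recall that by definition
\[
\prescript{TC}{0}{\mathscr{D}}_{\Psi(t)}^{\beta,\lambda}h(t)=\prescript{T}{0}{\mathscr{I}}_{\Psi(t)}^{m-\beta,\lambda}\,\prescript{T}{}{\mathscr{D}}_{\Psi(t)}^{m,\lambda}\,h(t).
\]
Applying this with $h=\prescript{T}{0}{\mathscr{I}}_{\Psi(t)}^{\alpha,\lambda}y$ gives
\[
\prescript{TC}{0}{\mathscr{D}}_{\Psi(t)}^{\beta,\lambda}\,\prescript{T}{0}{\mathscr{I}}_{\Psi(t)}^{\alpha,\lambda}y(t)=\prescript{T}{0}{\mathscr{I}}_{\Psi(t)}^{m-\beta,\lambda}\left[\prescript{T}{}{\mathscr{D}}_{\Psi(t)}^{m,\lambda}\,\prescript{T}{0}{\mathscr{I}}_{\Psi(t)}^{\alpha,\lambda}y(t)\right],
\]
so it remains only to evaluate the inner integer-order tempered derivative of a tempered integral and then merge the two surviving integrals.

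For the inner term I would invoke the semigroup relation \eqref{DI} with the derivative order $m$ and the integral order $\alpha$. The key point, and the reason for the strengthened hypothesis $\lfloor\beta\rfloor<\lfloor\alpha\rfloor$, is that it guarantees $m\le\alpha$: indeed $\lfloor\beta\rfloor=m-1<\lfloor\alpha\rfloor$ forces $m\le\lfloor\alpha\rfloor\le\alpha$, with strict inequality $m<\alpha$ whenever $\alpha\notin\N$. In this generic case $m<\alpha$ the first branch of \eqref{DI} applies and yields $\prescript{T}{}{\mathscr{D}}_{\Psi(t)}^{m,\lambda}\,\prescript{T}{0}{\mathscr{I}}_{\Psi(t)}^{\alpha,\lambda}y(t)=\prescript{T}{0}{\mathscr{I}}_{\Psi(t)}^{\alpha-m,\lambda}y(t)$. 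Substituting back and using the integral semigroup property \eqref{semi:I} to combine the orders,
\[
\prescript{T}{0}{\mathscr{I}}_{\Psi(t)}^{m-\beta,\lambda}\,\prescript{T}{0}{\mathscr{I}}_{\Psi(t)}^{\alpha-m,\lambda}y(t)=\prescript{T}{0}{\mathscr{I}}_{\Psi(t)}^{\alpha-\beta,\lambda}y(t),
\]
which is precisely the claimed identity. The borderline case $m=\alpha$ (possible only for integer $\alpha$) is handled by the second branch of \eqref{DI}, giving $y(t)$ directly, after which $\prescript{T}{0}{\mathscr{I}}_{\Psi(t)}^{m-\beta,\lambda}y=\prescript{T}{0}{\mathscr{I}}_{\Psi(t)}^{\alpha-\beta,\lambda}y$ since $m=\alpha$; the case of integer $\beta$ is even simpler, as then $\prescript{TC}{0}{\mathscr{D}}_{\Psi(t)}^{\beta,\lambda}=\prescript{T}{}{\mathscr{D}}_{\Psi(t)}^{\beta,\lambda}$ and \eqref{DI} applies in one stroke.

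The step I expect to be most delicate is not the algebra but justifying that every composition is legitimate and that the identity holds pointwise on $(0,b]$ rather than merely $\Psi$-almost everywhere. One must check that $\prescript{T}{0}{\mathscr{I}}_{\Psi(t)}^{\alpha,\lambda}y$ lies in $AC_\Psi^{m}[0,b]$ so that the Caputo factorisation and \eqref{DI} may be applied, and that $y$ is regular enough (absolutely $\Psi$-integrable, and continuous to upgrade the a.e.\ conclusion of \eqref{semi:I} to every $t$). Since $\alpha\ge m$, the tempered integral $\prescript{T}{0}{\mathscr{I}}_{\Psi(t)}^{\alpha,\lambda}y$ gains exactly the smoothness these requirements demand, so the argument closes. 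An alternative route, worth noting, would bypass \eqref{DI} altogether by conjugating via \eqref{Tprod:I} and \eqref{Tprod:C} to reduce the statement to the corresponding composition rule for the ordinary $\Psi$-Caputo derivative and $\Psi$-Riemann--Liouville integral, which in turn follows from the classical case through the $Q_\Psi$-conjugation of Proposition \ref{Conjugationrelations}.
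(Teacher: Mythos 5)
Your proposal is correct and follows essentially the same route as the paper's own proof: unpack the Caputo-type derivative via its definition as $\prescript{T}{0}{\mathscr{I}}_{\Psi(t)}^{m-\beta,\lambda}\,\prescript{T}{}{\mathscr{D}}_{\Psi(t)}^{m,\lambda}$, apply the semigroup relation \eqref{DI} to the inner derivative-of-integral (using the floor hypothesis to guarantee $m\leq\alpha$), and then merge the remaining integrals with \eqref{semi:I}. Your explicit treatment of the borderline cases and the regularity remarks are refinements the paper leaves implicit, but the core argument is identical.
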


\begin{proof}
Say $m-1\leq\beta<m$ and $n-1\leq\alpha<n$, so that our assumption is $m<n$ and $m\leq\alpha$. Using the semigroup properties \eqref{DI} and \eqref{semi:I}, we have
\begin{align*}
\prescript{TC}{0}{\mathscr{D}}_{\Psi(t)}^{\beta,\lambda}\prescript{T}{0}{\mathscr{I}}_{\Psi(t)}^{\alpha, \lambda}y(t)&=\prescript{T}{0}{\mathscr{I}}_{\Psi(t)}^{m-\beta, \lambda}\,\prescript{T}{}{\mathscr{D}}_{\Psi(t)}^{m, \lambda}\Big[\prescript{T}{0}{\mathscr{I}}_{\Psi(t)}^{\alpha, \lambda}y(t)\Big] \\
&=\prescript{T}{0}{\mathscr{I}}_{\Psi(t)}^{m-\beta, \lambda}\Big[\prescript{T}{0}{\mathscr{I}}_{\Psi(t)}^{\alpha-m, \lambda}y(t)\Big]=\prescript{T}{0}{\mathscr{I}}_{\Psi(t)}^{(m-\beta)+(\alpha-m), \lambda}y(t),
\end{align*}
giving the result.
\end{proof}

\begin{theorem}\label{c1}
Let $\alpha\in(0,\infty)$ and $\lambda\in\mathbb{R}$.
\begin{enumerate}[(a)]
\item If $n-1<\alpha<n\in\N$ and $y\in C^n[0, b]$, then $\prescript{TC}{0}{\mathscr{D}}_{\Psi(t)}^{\alpha, \lambda}\,\prescript{T}{0}{\mathscr{I}}_{\Psi(t)}^{\alpha, \lambda}y(t)=y(t)$.
\item If $n-1<\alpha<n\in\N$ and $y\in AC_{\Psi}^n[0, b]$, then
\begin{multline*}
\prescript{T}{0}{\mathscr{I}}_{\Psi(t)}^{\alpha, \lambda}\,\prescript{TC}{0}{\mathscr{D}}_{\Psi(t)}^{\alpha, \lambda}\,y(t)=y(t)- e^{-\lambda(\Psi(t)-\Psi(0))}\sum_{k=0}^{n-1}\frac{(\Psi(t)-\Psi(0))^{k}}{k!}\left[\prescript{T}{}{\mathscr{D}}_{\Psi(t)}^{k,\lambda}\,y(t)\right]_{t=0} \\
=y(t)- e^{-\lambda\Psi(t)}\sum_{k=0}^{n-1}\frac{(\Psi(t)-\Psi(0))^{k}}{k!}\left[\left(\frac{1}{\Psi'(t)}\cdot\frac{\mathrm{d}}{\mathrm{d}t} \right)^k\left(e^{\lambda\Psi(t)} y(t) \right)\right]_{t=0}.
\end{multline*}
\end{enumerate}
\end{theorem}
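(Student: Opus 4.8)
The plan is to reduce both parts to known inversion properties of the $\Psi$-Caputo and $\Psi$-Riemann--Liouville operators (Definition \ref{Def:Psi}) by exploiting the conjugation relations \eqref{Tprod:I} and \eqref{Tprod:C}, exactly in the spirit of the proof of Theorem \ref{Thm:RLCrel}. The essential observation is that in any composition of a $\Psi$-tempered integral with a $\Psi$-tempered derivative, the inner multiplication operators $e^{\lambda\Psi(t)}$ and $e^{-\lambda\Psi(t)}$ produced by the two conjugations meet in the middle and cancel, leaving a pure $\Psi$-level composition flanked by a single outer factor $e^{-\lambda\Psi(t)}$ and a single inner factor $e^{\lambda\Psi(t)}$.

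For part (a), I would first write, using \eqref{Tprod:I} and \eqref{Tprod:C},
\begin{align*}
\prescript{TC}{0}{\mathscr{D}}_{\Psi(t)}^{\alpha,\lambda}\,\prescript{T}{0}{\mathscr{I}}_{\Psi(t)}^{\alpha,\lambda}y(t)
&=e^{-\lambda\Psi(t)}\,\prescript{C}{0}{\mathscr{D}}_{\Psi(t)}^{\alpha}\Big(e^{\lambda\Psi(t)}\,e^{-\lambda\Psi(t)}\,\prescript{}{0}{\mathscr{I}}_{\Psi(t)}^{\alpha}\big(e^{\lambda\Psi(t)}y(t)\big)\Big)\\
&=e^{-\lambda\Psi(t)}\,\prescript{C}{0}{\mathscr{D}}_{\Psi(t)}^{\alpha}\,\prescript{}{0}{\mathscr{I}}_{\Psi(t)}^{\alpha}\big(e^{\lambda\Psi(t)}y(t)\big).
\end{align*}
Since $y\in C^n[0,b]\subset C[0,b]$ and $\Psi$ is smooth, the function $e^{\lambda\Psi(t)}y(t)$ lies in $C[0,b]$, so the standard left-inverse property of the $\Psi$-Caputo derivative over the $\Psi$-Riemann--Liouville integral (following from the classical statement via the $Q_{\Psi}$-conjugation of Proposition \ref{Conjugationrelations}) gives $\prescript{C}{0}{\mathscr{D}}_{\Psi(t)}^{\alpha}\,\prescript{}{0}{\mathscr{I}}_{\Psi(t)}^{\alpha}f=f$. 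Applying this with $f=e^{\lambda\Psi(t)}y(t)$ and cancelling the remaining exponential factors yields $y(t)$, as required.

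For part (b), the same conjugation reduces the composition to
\[
\prescript{T}{0}{\mathscr{I}}_{\Psi(t)}^{\alpha,\lambda}\,\prescript{TC}{0}{\mathscr{D}}_{\Psi(t)}^{\alpha,\lambda}y(t)=e^{-\lambda\Psi(t)}\,\prescript{}{0}{\mathscr{I}}_{\Psi(t)}^{\alpha}\,\prescript{C}{0}{\mathscr{D}}_{\Psi(t)}^{\alpha}\big(e^{\lambda\Psi(t)}y(t)\big).
\]
Here I would invoke the known $\Psi$-Caputo inversion formula (the $\Psi$-analogue of \cite[Theorem 3.1]{Diethelm}, as recorded in \cite{Almeida}), applied to $f=e^{\lambda\Psi(t)}y(t)$, which belongs to $AC_{\Psi}^n[0,b]$ whenever $y$ does since $\Psi$ is smooth; this produces $f$ minus the $\Psi$-Taylor polynomial $\sum_{k=0}^{n-1}\frac{(\Psi(t)-\Psi(0))^k}{k!}\big[\left(\frac{1}{\Psi'(t)}\cdot\frac{\mathrm{d}}{\mathrm{d}t}\right)^k f\big]_{t=0}$. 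Multiplying through by $e^{-\lambda\Psi(t)}$ immediately gives the second stated form. To obtain the first form, I would use the elementary operator identity $\left(\frac{1}{\Psi'(t)}\cdot\frac{\mathrm{d}}{\mathrm{d}t}\right)^k\big(e^{\lambda\Psi(t)}y(t)\big)=e^{\lambda\Psi(t)}\,\prescript{T}{}{\mathscr{D}}_{\Psi(t)}^{k,\lambda}y(t)$ (the iterated form of the conjugation defining the tempered derivative, cf.\ Definition \ref{Def:Psitempered}), evaluate it at $t=0$ to extract the factor $e^{\lambda\Psi(0)}$, and absorb it into the prefactor so as to turn $e^{-\lambda\Psi(t)}$ into $e^{-\lambda(\Psi(t)-\Psi(0))}$.

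I expect no deep analytic obstacle, since everything is controlled by conjugation together with the already-established $\Psi$-level inversion results. The only points requiring care are the verification of the function-space hypotheses (that $e^{\lambda\Psi(t)}y$ inherits the regularity of $y$, which is immediate from smoothness of $\Psi$ and hence of $e^{\lambda\Psi}$) and the bookkeeping of the exponential factors when passing between the two equivalent forms in part (b) — in particular, tracking the constant $e^{\lambda\Psi(0)}$ correctly.
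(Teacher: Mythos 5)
Your proof is correct, but it takes a genuinely different route from the paper's. The paper works entirely at the tempered level: for part (a) it expands the Caputo-type derivative by its definition, applies the composition relation \eqref{DI} to obtain $\prescript{T}{0}{\mathscr{I}}_{\Psi(t)}^{n-\alpha,\lambda}\,\prescript{TR}{0}{\mathscr{D}}_{\Psi(t)}^{n-\alpha,\lambda}\,y(t)$, then invokes the inversion formula \eqref{ID} and kills the resulting initial-value term using the limit relation \eqref{limit0:int} --- this last step being exactly where the continuity hypothesis enters; for part (b) it uses the definition together with the semigroup property \eqref{semi:I} to collapse the composition to the integer-order case $\prescript{T}{0}{\mathscr{I}}_{\Psi(t)}^{n,\lambda}\,\prescript{T}{}{\mathscr{D}}_{\Psi(t)}^{n,\lambda}\,y(t)$, which is then resolved by \eqref{861} (equivalently \cite[Theorem 4]{Almeida}). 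You instead conjugate both compositions down to the non-tempered $\Psi$-level via \eqref{Tprod:I} and \eqref{Tprod:C}, so that the interior exponentials cancel, and then quote the known $\Psi$-Caputo results of \cite{Almeida,Kilbas}: the left-inverse property for part (a) and the Taylor-remainder inversion formula for part (b), restoring the exponential factors at the end. This is precisely the strategy the paper itself uses to prove Theorem \ref{Thm:RLCrel}, and it is arguably more economical here: the only work is the cancellation in the middle and the bookkeeping of $e^{\lambda\Psi(0)}$, which you handle correctly via the iterated conjugation identity $\left(\frac{1}{\Psi'(t)}\cdot\frac{\mathrm{d}}{\mathrm{d}t}\right)^k\left(e^{\lambda\Psi(t)}y(t)\right)=e^{\lambda\Psi(t)}\,\prescript{T}{}{\mathscr{D}}_{\Psi(t)}^{k,\lambda}y(t)$. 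What the paper's route buys in exchange is a self-contained argument within the tempered-level toolkit, and, in part (a), an intermediate identity valid for any $y$ for which the expressions exist, which makes explicit that the sole role of the hypothesis $y\in C^n[0,b]$ is to annihilate the initial-value term; your route delegates that same point to the continuity hypothesis of the quoted $\Psi$-Caputo left-inverse theorem, which is legitimate, and your stronger hypothesis $y\in C^n[0,b]$ also disposes of any definitional subtlety about applying the Caputo-type derivative to the fractional integral of a merely continuous function.
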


\begin{proof}
We prove each of the two statements separately, noting that the first one requires a stronger condition on the function $y$.
\begin{enumerate}[(a)]
\item Using the definition of Caputo-type derivatives and then the relations \eqref{DI} and \eqref{ID} respectively, we have:
\begin{align*}
\prescript{TC}{0}{\mathscr{D}}_{\Psi(t)}^{\alpha, \lambda}\,\prescript{T}{0}{\mathscr{I}}_{\Psi(t)}^{\alpha, \lambda}\,y(t)&=\prescript{T}{0}{\mathscr{I}}_{\Psi(t)}^{n-\alpha, \lambda}\,\prescript{T}{}{\mathscr{D}}_{\Psi(t)}^{n, \lambda}\,\prescript{T}{0}{\mathscr{I}}_{\Psi(t)}^{\alpha, \lambda}\,y(t) \\
&=\prescript{T}{0}{\mathscr{I}}_{\Psi(t)}^{n-\alpha, \lambda}\,\,\prescript{TR}{0}{\mathscr{D}}_{\Psi(t)}^{n-\alpha, \lambda}\,y(t) \\
&=y(t)-e^{-\lambda\Psi(t)}\frac{(\Psi(t)-\Psi(0))^{n-\alpha-1}}{\Gamma(n-\alpha)}\left[\prescript{}{0}{\mathscr{I}}_{\Psi(t)}^{1-\alpha+n} \left( e^{\lambda\Psi(t)}y(t)\right) \right]_{t=0},
\end{align*}
where this holds for any $y$ such that the expression is defined. If we then assume $y$ is continuous, then the initial-value ($t=0$) part of this expression is zero by \eqref{limit0:int}, and the stated result follows.

\item Using the definition of Caputo-type derivatives and then the semigroup property \eqref{semi:I}, we have:
\begin{align*}
\prescript{T}{0}{\mathscr{I}}_{\Psi(t)}^{\alpha, \lambda}\,\prescript{TC}{0}{\mathscr{D}}_{\Psi(t)}^{\alpha, \lambda}\,y(t)&=\prescript{T}{0}{\mathscr{I}}_{\Psi(t)}^{\alpha, \lambda}\,\prescript{T}{0}{\mathscr{I}}_{\Psi(t)}^{n-\alpha, \lambda}\,\prescript{T}{}{\mathscr{D}}_{\Psi(t)}^{n, \lambda}\,y(t) \\
&=\prescript{T}{0}{\mathscr{I}}_{\Psi(t)}^{n, \lambda}\,\prescript{T}{}{\mathscr{D}}_{\Psi(t)}^{n, \lambda}\,y(t),
\end{align*}
and then the result follows from \cite[Theorem 4]{Almeida}.
\end{enumerate}
\end{proof}

\begin{proposition}
If $\alpha\in(0,\infty)$ with $n-1<\alpha<n\in\N$, $\lambda\in\mathbb{R}$ and $m\in\N$, then
\[
\prescript{TC}{0}{\mathscr{D}}_{\Psi(t)}^{\alpha, \lambda}\,\prescript{T}{}{\mathscr{D}}_{\Psi(t)}^{m, \lambda}\,y(t)=\prescript{TC}{0}{\mathscr{D}}_{\Psi(t)}^{\alpha+m, \lambda}\,y(t),
\]
for any function $y$ such that these expressions are well-defined, such as $y\in AC_{\Psi}^{m+n}[0, b]$.
\end{proposition}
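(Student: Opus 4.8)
The plan is to unfold the Caputo-type $\Psi$-tempered derivative on both sides using its definition (Definition \ref{Def:Psitempered}) as a $\Psi$-tempered integral composed with an integer-order tempered derivative, and then simply to check that the bookkeeping of the orders matches. First I would apply the definition to the left-hand side. Since $n-1<\alpha<n$, the Caputo derivative of order $\alpha$ applied to the function $\prescript{T}{}{\mathscr{D}}_{\Psi(t)}^{m,\lambda}\,y(t)$ reads
\[
\prescript{TC}{0}{\mathscr{D}}_{\Psi(t)}^{\alpha,\lambda}\,\prescript{T}{}{\mathscr{D}}_{\Psi(t)}^{m,\lambda}\,y(t)=\prescript{T}{0}{\mathscr{I}}_{\Psi(t)}^{n-\alpha,\lambda}\,\prescript{T}{}{\mathscr{D}}_{\Psi(t)}^{n,\lambda}\,\prescript{T}{}{\mathscr{D}}_{\Psi(t)}^{m,\lambda}\,y(t).
\]

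The key step is then to observe that the two integer-order tempered derivatives compose trivially: both $\prescript{T}{}{\mathscr{D}}_{\Psi(t)}^{n,\lambda}$ and $\prescript{T}{}{\mathscr{D}}_{\Psi(t)}^{m,\lambda}$ are, by Definition \ref{Def:Psitempered}, nothing but iterates of the single first-order operator $\frac{1}{\Psi'(t)}\cdot\frac{\mathrm{d}}{\mathrm{d}t}+\lambda$, so their composition is $\left(\frac{1}{\Psi'(t)}\cdot\frac{\mathrm{d}}{\mathrm{d}t}+\lambda\right)^{n+m}=\prescript{T}{}{\mathscr{D}}_{\Psi(t)}^{n+m,\lambda}$. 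This reduces the left-hand side to $\prescript{T}{0}{\mathscr{I}}_{\Psi(t)}^{n-\alpha,\lambda}\,\prescript{T}{}{\mathscr{D}}_{\Psi(t)}^{n+m,\lambda}\,y(t)$.

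For the right-hand side, I would note that $m\in\N$ together with $n-1<\alpha<n$ forces $n+m-1<\alpha+m<n+m$, so the integer ceiling associated with the order $\alpha+m$ is exactly $n+m$. Applying Definition \ref{Def:Psitempered} with this ceiling gives
\[
\prescript{TC}{0}{\mathscr{D}}_{\Psi(t)}^{\alpha+m,\lambda}\,y(t)=\prescript{T}{0}{\mathscr{I}}_{\Psi(t)}^{(n+m)-(\alpha+m),\lambda}\,\prescript{T}{}{\mathscr{D}}_{\Psi(t)}^{n+m,\lambda}\,y(t)=\prescript{T}{0}{\mathscr{I}}_{\Psi(t)}^{n-\alpha,\lambda}\,\prescript{T}{}{\mathscr{D}}_{\Psi(t)}^{n+m,\lambda}\,y(t),
\]
since $(n+m)-(\alpha+m)=n-\alpha$. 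The two expressions coincide, which proves the claim.

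There is no serious analytic obstacle here; the only genuine content is the elementary (but essential) fact that adding an integer $m$ to $\alpha$ shifts the ceiling from $n$ to $n+m$ while leaving the fractional integral order $n-\alpha$ unchanged, which is precisely what makes the two sides align. The one point requiring a word of justification is the validity of all the operators: under the stated hypothesis $y\in AC_{\Psi}^{m+n}[0,b]$, the top derivative $\left(\frac{1}{\Psi'(t)}\cdot\frac{\mathrm{d}}{\mathrm{d}t}\right)^{m+n}y$ lies in $AC[0,b]$, so $\prescript{T}{}{\mathscr{D}}_{\Psi(t)}^{n+m,\lambda}\,y$ is a finite linear combination of absolutely continuous functions and is therefore bounded and absolutely $\Psi$-integrable; this ensures that the outer $\Psi$-tempered integral of positive order $n-\alpha$ is well-defined on both sides, so that the formal manipulation above is legitimate.
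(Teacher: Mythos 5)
Your proof is correct and follows essentially the same route as the paper's own argument: unfold the Caputo-type definition, use the trivial semigroup property $\prescript{T}{}{\mathscr{D}}_{\Psi(t)}^{n,\lambda}\,\prescript{T}{}{\mathscr{D}}_{\Psi(t)}^{m,\lambda}=\prescript{T}{}{\mathscr{D}}_{\Psi(t)}^{n+m,\lambda}$ of the iterated first-order operator, and observe that $n-1<\alpha<n$ forces $(n+m)-(\alpha+m)=n-\alpha$ with $n+m$ the correct ceiling for $\alpha+m$. Your additional remark on well-definedness under the hypothesis $y\in AC_{\Psi}^{m+n}[0,b]$ is a small but welcome supplement that the paper leaves implicit.
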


\begin{proof}
This follows from the definition of Caputo-type derivatives, where $n-1<\alpha<n$ implies $m+n-1<m+\alpha<m+n$, and the semigroup property of standard non-fractional derivatives that is immediate from their definition:
\begin{align*}
\prescript{TC}{0}{\mathscr{D}}_{\Psi(t)}^{\alpha, \lambda}\,\prescript{T}{}{\mathscr{D}}_{\Psi(t)}^{m, \lambda}\,y(t)&=\prescript{T}{0}{\mathscr{I}}_{\Psi(t)}^{n-\alpha, \lambda}\,\prescript{T}{}{\mathscr{D}}_{\Psi(t)}^{n, \lambda}\,\prescript{T}{}{\mathscr{D}}_{\Psi(t)}^{m, \lambda}\,y(t) \\
&=\prescript{T}{0}{\mathscr{I}}_{\Psi(t)}^{(m+n)-(m+\alpha), \lambda}\,\prescript{T}{}{\mathscr{D}}_{\Psi(t)}^{m+n, \lambda}y(t)=\prescript{TC}{0}{\mathscr{D}}_{\Psi(t)}^{\alpha+m, \lambda} y(t).
\end{align*}
\end{proof}

\begin{proposition}
If $\alpha\in(0,\infty)$ with $n-1<\alpha<n\in\N$ and $\lambda\in\mathbb{R}$ and $m\in\N$, then for any $y\in AC_{\Psi}^{m+n}[0, b]$,
\begin{align*}
\prescript{T}{}{\mathscr{D}}_{\Psi(t)}^{m, \lambda}\,\prescript{TC}{0}{\mathscr{D}}_{\Psi(t)}^{\alpha, \lambda}\, y(t)&=\prescript{TC}{0}{\mathscr{D}}_{\Psi(t)}^{\alpha+m, \lambda}\,y(t) \\ &\hspace{1cm}+ e^{-\lambda \left( \Psi(t)-\Psi(0)\right) }\sum_{k=0}^{m-1} \frac{\left( \Psi(t)-\Psi(0)\right)^{k+n-\alpha-m}}{\Gamma\left( k+n-\alpha-m+1\right) }\left[ \prescript{T}{}{\mathscr{D}}_{\Psi(t)}^{k+n,\lambda}y(t)\right]_{t=0} \\
&\hspace{-2cm}=\prescript{TC}{0}{\mathscr{D}}_{\Psi(t)}^{\alpha+m, \lambda}\,y(t) \\ &\hspace{-1cm}+ e^{-\lambda\Psi(t)}\sum_{k=0}^{m-1} \frac{\left( \Psi(t)-\Psi(0)\right)^{k+n-\alpha-m}}{\Gamma\left( k+n-\alpha-m+1\right) }\left[ \left( \frac{1}{\Psi'(t)}\cdot\frac{\mathrm{d}}{\mathrm{d}t}\right)^{k+n}\left(e^{\lambda\Psi(t)} y(t) \right)\right]_{t=0}.
\end{align*}
\end{proposition}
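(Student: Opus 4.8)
The plan is to conjugate the whole problem into a single Riemann--Liouville-versus-Caputo comparison, by reducing both sides to operators acting on the one auxiliary function $g:=\prescript{T}{}{\mathscr{D}}_{\Psi(t)}^{n,\lambda}y$, and then reading off the correction terms from Theorem \ref{Thm:RLCrel}. Since the case $m=0$ is vacuous, I assume $m\geq1$. The hypothesis $y\in AC_{\Psi}^{m+n}[0,b]$ guarantees that $g\in AC_{\Psi}^{m}[0,b]$, precisely the regularity I will need later. First I write the Caputo-type operator by its definition, $\prescript{TC}{0}{\mathscr{D}}_{\Psi(t)}^{\alpha,\lambda}y=\prescript{T}{0}{\mathscr{I}}_{\Psi(t)}^{n-\alpha,\lambda}\,g$, so that the left-hand side becomes $\prescript{T}{}{\mathscr{D}}_{\Psi(t)}^{m,\lambda}\,\prescript{T}{0}{\mathscr{I}}_{\Psi(t)}^{n-\alpha,\lambda}\,g$. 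The condition $n-1<\alpha<n$ forces $n-\alpha\in(0,1)$, hence $m>n-\alpha$, and so the semigroup relation \eqref{DI} (in its ``$\alpha>\beta$'' branch, with the integer-order tempered derivative identified with the Riemann--Liouville one) collapses this to $\prescript{TR}{0}{\mathscr{D}}_{\Psi(t)}^{\mu,\lambda}g$, where I set $\mu:=m+\alpha-n$. The decisive bookkeeping observation is that $\alpha-n\in(-1,0)$, whence $\mu\in(m-1,m)$ and $\lceil\mu\rceil=m$.

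Next I rewrite the target leading term in the same language. By definition $\prescript{TC}{0}{\mathscr{D}}_{\Psi(t)}^{\alpha+m,\lambda}y=\prescript{T}{0}{\mathscr{I}}_{\Psi(t)}^{n-\alpha,\lambda}\,\prescript{T}{}{\mathscr{D}}_{\Psi(t)}^{m+n,\lambda}y=\prescript{T}{0}{\mathscr{I}}_{\Psi(t)}^{n-\alpha,\lambda}\,\prescript{T}{}{\mathscr{D}}_{\Psi(t)}^{m,\lambda}g$, and since $m-\mu=n-\alpha$ and $\mu\in(m-1,m)$, this is exactly $\prescript{TC}{0}{\mathscr{D}}_{\Psi(t)}^{\mu,\lambda}g$. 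Thus the identity to be proved is equivalent to the single statement $\prescript{TR}{0}{\mathscr{D}}_{\Psi(t)}^{\mu,\lambda}g=\prescript{TC}{0}{\mathscr{D}}_{\Psi(t)}^{\mu,\lambda}g+(\text{correction})$, which is just Theorem \ref{Thm:RLCrel} applied to $g$ at fractional order $\mu$ (with its ``$n$'' being $m$). Rearranging that theorem gives $\prescript{TR}{0}{\mathscr{D}}_{\Psi(t)}^{\mu,\lambda}g=\prescript{TC}{0}{\mathscr{D}}_{\Psi(t)}^{\mu,\lambda}g+\prescript{TR}{0}{\mathscr{D}}_{\Psi(t)}^{\mu,\lambda}\sum_{k=0}^{m-1}\frac{e^{-\lambda(\Psi(t)-\Psi(0))}(\Psi(t)-\Psi(0))^k}{k!}\big[\prescript{T}{}{\mathscr{D}}_{\Psi(t)}^{k,\lambda}g\big]_{t=0}$.

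It then remains to evaluate the correction term by term. Each summand is a constant times $\prescript{TR}{0}{\mathscr{D}}_{\Psi(t)}^{\mu,\lambda}\big(e^{-\lambda\Psi(t)}(\Psi(t)-\Psi(0))^k\big)$, so the Riemann--Liouville-type formula of Proposition \ref{Prop:specfunc1} applies directly and produces $\frac{\Gamma(k+1)}{\Gamma(k-\mu+1)}e^{-\lambda\Psi(t)}(\Psi(t)-\Psi(0))^{k-\mu}$; substituting $\mu=m+\alpha-n$ turns the exponent into $k+n-\alpha-m$ and the Gamma factor into $\Gamma(k+n-\alpha-m+1)$, while reinstating the shift $e^{\lambda\Psi(0)}$ converts the prefactor back to $e^{-\lambda(\Psi(t)-\Psi(0))}$. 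Finally, $g=\prescript{T}{}{\mathscr{D}}_{\Psi(t)}^{n,\lambda}y$ together with the semigroup property of integer tempered derivatives gives $\big[\prescript{T}{}{\mathscr{D}}_{\Psi(t)}^{k,\lambda}g\big]_{t=0}=\big[\prescript{T}{}{\mathscr{D}}_{\Psi(t)}^{k+n,\lambda}y\big]_{t=0}$, which yields the first displayed form of the result; the second form follows at once by expressing this initial value through the conjugation identity $\prescript{T}{}{\mathscr{D}}_{\Psi(t)}^{k+n,\lambda}y=e^{-\lambda\Psi(t)}\big(\tfrac{1}{\Psi'(t)}\tfrac{\mathrm{d}}{\mathrm{d}t}\big)^{k+n}\big(e^{\lambda\Psi(t)}y\big)$ and absorbing the resulting $e^{-\lambda\Psi(0)}$ into the prefactor.

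The power-and-exponential calculus is entirely routine; the one step demanding genuine care---and the main potential pitfall---is the order bookkeeping, namely the verification that $\mu=m+\alpha-n\in(m-1,m)$. This single fact is what simultaneously licenses the ``$\alpha>\beta$'' branch of \eqref{DI}, guarantees that $\prescript{TC}{0}{\mathscr{D}}_{\Psi(t)}^{\alpha+m,\lambda}y$ really equals $\prescript{TC}{0}{\mathscr{D}}_{\Psi(t)}^{\mu,\lambda}g$ (and not a neighbouring order), and ensures that Theorem \ref{Thm:RLCrel} is invoked with exactly $m$ correction terms; an off-by-one slip here would corrupt both the number of terms in the sum and every Gamma-function argument.
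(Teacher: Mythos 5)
Your proof is correct, but it follows a genuinely different route from the paper's. The paper disposes of this proposition in one line: by the conjugation relations (Proposition \ref{Conjugationrelations}), the composition $\prescript{T}{}{\mathscr{D}}_{\Psi(t)}^{m,\lambda}\,\prescript{TC}{0}{\mathscr{D}}_{\Psi(t)}^{\alpha,\lambda}y$ is $e^{-\lambda\Psi(t)}$ times the corresponding composition of untempered $\Psi$-operators applied to $e^{\lambda\Psi(t)}y(t)$, so the identity is exactly \cite[Theorem 9]{Almeida} ``multiplied and divided by the appropriate exponential factors.'' You instead stay entirely inside the paper's own toolkit: setting $g=\prescript{T}{}{\mathscr{D}}_{\Psi(t)}^{n,\lambda}y$ and $\mu=m+\alpha-n\in(m-1,m)$, you collapse the left-hand side to $\prescript{TR}{0}{\mathscr{D}}_{\Psi(t)}^{\mu,\lambda}g$ via the $\alpha>\beta$ branch of \eqref{DI}, identify the leading term through $\prescript{TC}{0}{\mathscr{D}}_{\Psi(t)}^{\alpha+m,\lambda}y=\prescript{TC}{0}{\mathscr{D}}_{\Psi(t)}^{\mu,\lambda}g$, and read the correction terms off Theorem \ref{Thm:RLCrel} combined with Proposition \ref{Prop:specfunc1}. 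I checked the details and they hold: $\mu\in(m-1,m)$ ensures Theorem \ref{Thm:RLCrel} is applied with exactly $m$ correction terms; the substitution $k-\mu=k+n-\alpha-m$ produces the right exponents and Gamma arguments; the initial values satisfy $\left[\prescript{T}{}{\mathscr{D}}_{\Psi(t)}^{k,\lambda}g\right]_{t=0}=\left[\prescript{T}{}{\mathscr{D}}_{\Psi(t)}^{k+n,\lambda}y\right]_{t=0}$; and the regularity transfer ($y\in AC_{\Psi}^{m+n}$ gives $g\in AC_{\Psi}^{m}$) is what licenses both \eqref{DI} and Theorem \ref{Thm:RLCrel}. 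The trade-off: the paper's argument is shorter and exemplifies its guiding principle that $\Psi$-tempered statements are exponential conjugates of known $\Psi$-fractional ones, but it rests on an external citation; yours is self-contained in results already proved or stated in the paper, at the cost of the order bookkeeping you rightly single out as the delicate point, and it has the side benefit of exhibiting the correction terms explicitly as Riemann--Liouville-type $\Psi$-tempered derivatives of the kernel functions $e^{-\lambda\Psi(t)}(\Psi(t)-\Psi(0))^k$.
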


\begin{proof}
This follows immediately from \cite[Theorem 9]{Almeida} upon multiplying and dividing by the appropriate exponential factors.
\end{proof}

\subsection{Kernels of the derivative operators}

In this subsection, we discover the conditions under which a function's $\Psi$-tempered fractional derivative (of either Riemann--Liouville or Caputo type) is zero. Equivalently (since the operators are linear), we discover the conditions under which two functions have equal $\Psi$-tempered fractional derivatives.

\begin{theorem} \label{Thm:kernelR}
Let $\alpha\in(0,\infty)$ and $\lambda\in\mathbb{R}$ with $n-1<\alpha<n\in\N$. Then, two functions $f,g\in AC_{\Psi}^n[0, b]$ satisfy
\begin{equation}
\label{kernel:TR}
\prescript{TR}{0}{\mathscr{D}}_{\Psi(t)}^{\alpha, \lambda} f(t)=\prescript{TR}{0}{\mathscr{D}}_{\Psi(t)}^{\alpha, \lambda} g(t),\qquad t\in(0, b],
\end{equation}
if and only if their difference can be written in the form
\begin{equation}\label{571}
f(t)-g(t)=e^{-\lambda\Psi(t)} \sum_{k=1}^{n} c_k (\Psi(t)-\Psi(0))^{\alpha-k},
\end{equation}
where $c_1,c_2,\cdots,c_n$ are constants given by
\begin{equation}\label{572}
c_k=\frac{1}{\Gamma(\alpha-k+1)}\left[\left( \frac{1}{\Psi'(t)}\cdot\frac{\mathrm{d}}{\mathrm{d}t}\right)^{n-k}\prescript{T}{0}{\mathscr{I}}_{\Psi(t)}^{n-\alpha}\Big(e^{\lambda\Psi(t)}\left[f(t)-g(t)\right]\Big) \right]_{t=0}.
\end{equation}
\end{theorem}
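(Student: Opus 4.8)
The plan is to prove the two implications separately, reducing each of them to a result already established in the paper: the inversion property \eqref{ID} will handle the forward direction essentially for free, while the action of the operators on power functions in Proposition \ref{Prop:specfunc1} will handle the converse. Since both $f$ and $g$ lie in $AC_{\Psi}^n[0,b]$, so does their difference, which is exactly the regularity needed to invoke both of those results.

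For the forward implication, I would first use linearity of $\prescript{TR}{0}{\mathscr{D}}_{\Psi(t)}^{\alpha,\lambda}$ to rewrite the hypothesis \eqref{kernel:TR} as the single statement $\prescript{TR}{0}{\mathscr{D}}_{\Psi(t)}^{\alpha,\lambda}[f(t)-g(t)]=0$ on $(0,b]$. Applying the integral operator $\prescript{T}{0}{\mathscr{I}}_{\Psi(t)}^{\alpha,\lambda}$ to both sides and invoking \eqref{ID} with $y=f-g$, the left-hand side collapses to $0$, while the right-hand side is precisely $(f-g)(t)$ minus the finite sum in \eqref{571}, whose coefficients are exactly those displayed in \eqref{572}. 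Rearranging immediately yields the claimed representation, with no further computation required; the regularity $f-g\in AC_{\Psi}^n[0,b]$ is what guarantees that \eqref{ID} applies.

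For the converse, I would assume $f-g$ has the form \eqref{571} and apply $\prescript{TR}{0}{\mathscr{D}}_{\Psi(t)}^{\alpha,\lambda}$ termwise. Each summand is $c_k\,e^{-\lambda\Psi(t)}(\Psi(t)-\Psi(0))^{\alpha-k}$, so Proposition \ref{Prop:specfunc1} applies with $\xi=\alpha-k$; note that $\xi>-1$ holds for every $k\in\{1,\dots,n\}$ because $k\leq n$ and $\alpha>n-1$ force $\alpha-k\geq\alpha-n>-1$. The resulting prefactor is $\Gamma(\alpha-k+1)/\Gamma(1-k)$, and since $1-k$ is a non-positive integer for each such $k$, the denominator $\Gamma(1-k)$ has a pole, so every term vanishes. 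Summing over $k$ gives $\prescript{TR}{0}{\mathscr{D}}_{\Psi(t)}^{\alpha,\lambda}[f-g]=0$, which is \eqref{kernel:TR}.

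I do not expect any substantive analytic difficulty: the only real work is bookkeeping in the forward direction, where one must verify that the sum produced by \eqref{ID} matches \eqref{571}--\eqref{572} exactly, with the iterated $\Psi$-derivative $\bigl(\tfrac{1}{\Psi'(t)}\cdot\tfrac{\mathrm d}{\mathrm dt}\bigr)^{n-k}$ and the order-$(n-\alpha)$ $\Psi$-integral paired off correctly against each power $(\Psi(t)-\Psi(0))^{\alpha-k}$. Everything else is an immediate consequence of the cited identities, so the proof is essentially a matter of assembling two already-proved ingredients rather than developing anything genuinely new.
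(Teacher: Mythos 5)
Your proposal is correct and follows essentially the same route as the paper's own proof: the forward direction by applying $\prescript{T}{0}{\mathscr{I}}_{\Psi(t)}^{\alpha,\lambda}$ to $\prescript{TR}{0}{\mathscr{D}}_{\Psi(t)}^{\alpha,\lambda}[f-g]=0$ and invoking the inversion property \eqref{ID}, and the converse by termwise application of Proposition \ref{Prop:specfunc1} with $\xi=\alpha-k>-1$ and the vanishing of $1/\Gamma$ at the non-positive integers $1-k$. No gaps; the bookkeeping you flag (matching the sum from \eqref{ID} against \eqref{571}--\eqref{572}) is exactly the identification the paper makes.
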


\begin{proof}
If \eqref{kernel:TR} holds, then $\prescript{TR}{0}{\mathscr{D}}_{\Psi(t)}^{\alpha, \lambda} \left[ f(t)-g(t)\right] =0$; applying the operator $\prescript{T}{0}{\mathscr{I}}_{\Psi(t)}^{\alpha, \lambda}$ on both sides, the result follows immediately from \eqref{ID}.

Conversely, if \eqref{571} holds, then applying the operator $\prescript{TR}{0}{\mathscr{D}}_{\Psi(t)}^{\alpha, \lambda}$ on both sides of \eqref{571} gives
\[
\prescript{TR}{0}{\mathscr{D}}_{\Psi(t)}^{\alpha, \lambda} \left[ f(t)-g(t)\right] = \sum_{k=1}^{n} c_k\,\prescript{TR}{0}{\mathscr{D}}_{\Psi(t)}^{\alpha, \lambda} \left[ e^{-\lambda\Psi(t)}(\Psi(t)-\Psi(0))^{\alpha-k}\right].
\]
By Proposition \ref{Prop:specfunc1} with $\xi=\alpha-k\geq\alpha-n>-1$ (here we use the assumption that $n-1<\alpha<n$) we have
\begin{align*}
\prescript{TR}{0}{\mathscr{D}}_{\Psi(t)}^{\alpha, \lambda} \left[ f(t)-g(t)\right] &= \sum_{k=1}^{n} c_k\,\frac{\Gamma(\alpha-k+1)}{\Gamma(-k+1)}\,e^{-\lambda\Psi(t)}(\Psi(t)-\Psi(0))^{-k+1} \\
&=\sum_{k=1}^nc_k(0)e^{-\lambda\Psi(t)}(\Psi(t)-\Psi(0))^{-k+1}=0,
\end{align*}
using the fact that the entire function $\frac{1}{\Gamma(z)}$ is zero when $z$ is a non-positive integer. Thus, \eqref{kernel:TR} is established.
\end{proof}

\begin{theorem} \label{Thm:kernelC}
Let $\alpha\in(0,\infty)$ and $\lambda\in\mathbb{R}$ with $n-1<\alpha<n\in\N$. Then, two functions $f,g\in AC_{\Psi}^n[0, b]$ satisfy
\begin{equation}
\label{kernel:TC}
\prescript{TC}{0}{\mathscr{D}}_{\Psi(t)}^{\alpha, \lambda} f(t)=\prescript{TC}{0}{\mathscr{D}}_{\Psi(t)}^{\alpha, \lambda} g(t),\quad t\in(0, b],
\end{equation}
if and only if their difference can be written in the form
\begin{equation}\label{881}
f(t)=g(t)+e^{-\lambda\Psi(t)} \sum_{k=0}^{n-1} c_k (\Psi(t)-\Psi(0))^k,
\end{equation}
where $c_0,c_1,\cdots,c_{n-1}$ are constants given by
\begin{equation}\label{881a}
c_k=\frac{1}{k!}\left[\left( \frac{1}{\Psi'(t)}\cdot\frac{\mathrm{d}}{\mathrm{d}t}\right) ^k \Big( e^{\lambda\Psi(t)}\left[ f(t)-g(t)\right] \Big) \right]_{t=0}.
\end{equation}
\end{theorem}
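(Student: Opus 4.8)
The plan is to mirror the structure of the proof of Theorem \ref{Thm:kernelR}, but with the Caputo-type inversion identity of Theorem \ref{c1}(b) playing the role that \eqref{ID} played there, and taking care over a restriction in Proposition \ref{Prop:specfunc1} which turns out not to cover the exponents we need. For the forward implication, I would use linearity: assuming \eqref{kernel:TC}, we have $\prescript{TC}{0}{\mathscr{D}}_{\Psi(t)}^{\alpha, \lambda}\left[f(t)-g(t)\right]=0$ on $(0,b]$. Applying the $\Psi$-tempered integral $\prescript{T}{0}{\mathscr{I}}_{\Psi(t)}^{\alpha, \lambda}$ to both sides and invoking the second (explicit) form of the inversion identity in Theorem \ref{c1}(b), the left-hand side is $\prescript{T}{0}{\mathscr{I}}_{\Psi(t)}^{\alpha, \lambda}[0]=0$, while the right-hand side is $f(t)-g(t)$ minus the displayed finite sum. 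Rearranging then reproduces \eqref{881} verbatim, with the constants $c_k$ exactly as in \eqref{881a} (the second form carries the prefactor $e^{-\lambda\Psi(t)}$, so no rewriting of exponential factors is required).

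For the converse, I would apply $\prescript{TC}{0}{\mathscr{D}}_{\Psi(t)}^{\alpha, \lambda}$ to both sides of \eqref{881}; by linearity it suffices to show that $\prescript{TC}{0}{\mathscr{D}}_{\Psi(t)}^{\alpha, \lambda}\left[e^{-\lambda\Psi(t)}(\Psi(t)-\Psi(0))^k\right]=0$ for every $k\in\{0,1,\dots,n-1\}$. The natural temptation is to cite Proposition \ref{Prop:specfunc1}; however, its Caputo-type formula requires $\xi>\lfloor\alpha\rfloor=n-1$, whereas here $\xi=k\leq n-1$, so the Proposition does not apply. This is the main (though modest) obstacle, and it is the point where the Caputo case genuinely differs from the Riemann--Liouville case of Theorem \ref{Thm:kernelR} (there the exponents $\alpha-k$ lay in the admissible range and vanishing came from the pole of $1/\Gamma$). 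I would resolve it directly using the conjugation relation \eqref{Tprod:C}, which gives
\[
\prescript{TC}{0}{\mathscr{D}}_{\Psi(t)}^{\alpha, \lambda}\!\left[e^{-\lambda\Psi(t)}(\Psi(t)-\Psi(0))^k\right]=e^{-\lambda\Psi(t)}\,\prescript{C}{0}{\mathscr{D}}_{\Psi(t)}^{\alpha}\!\left[(\Psi(t)-\Psi(0))^k\right],
\]
and then observing that the $\Psi$-Caputo derivative annihilates these low powers: since $\prescript{C}{0}{\mathscr{D}}_{\Psi(t)}^{\alpha}=\prescript{}{0}{\mathscr{I}}_{\Psi(t)}^{n-\alpha}\left(\frac{1}{\Psi'(t)}\cdot\frac{\mathrm{d}}{\mathrm{d}t}\right)^n$, and each application of $\frac{1}{\Psi'(t)}\cdot\frac{\mathrm{d}}{\mathrm{d}t}$ sends $(\Psi(t)-\Psi(0))^m$ to $m(\Psi(t)-\Psi(0))^{m-1}$, the $n$-fold operator kills $(\Psi(t)-\Psi(0))^k$ whenever $k<n$. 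Hence every term vanishes and $\prescript{TC}{0}{\mathscr{D}}_{\Psi(t)}^{\alpha, \lambda}\left[f(t)-g(t)\right]=0$, which is \eqref{kernel:TC}.

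Equivalently, and perhaps more illuminatingly, the converse can be read as the statement that the functions $e^{-\lambda\Psi(t)}(\Psi(t)-\Psi(0))^k$ for $0\le k\le n-1$ lie in the kernel of the $n$-th order tempered operator $\prescript{T}{}{\mathscr{D}}_{\Psi(t)}^{n, \lambda}$: a one-line computation shows that $\prescript{T}{}{\mathscr{D}}_{\Psi(t)}^{1,\lambda}\left(e^{-\lambda\Psi(t)}h(t)\right)=e^{-\lambda\Psi(t)}\left(\frac{1}{\Psi'(t)}\cdot\frac{\mathrm{d}}{\mathrm{d}t}\right)h(t)$, so that iterating gives $\prescript{T}{}{\mathscr{D}}_{\Psi(t)}^{n,\lambda}\left(e^{-\lambda\Psi(t)}(\Psi(t)-\Psi(0))^k\right)=0$ for $k<n$. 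Since the Caputo-type derivative factors as $\prescript{T}{0}{\mathscr{I}}_{\Psi(t)}^{n-\alpha,\lambda}\,\prescript{T}{}{\mathscr{D}}_{\Psi(t)}^{n,\lambda}$, these functions automatically lie in its kernel as well, completing the proof and incidentally confirming the remark following Proposition \ref{Coroll:1} that $e^{-\lambda\Psi(t)}$ is the genuine ``constant'' of this calculus.
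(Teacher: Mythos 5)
Your proposal is correct and follows essentially the same route as the paper: the forward direction applies $\prescript{T}{0}{\mathscr{I}}_{\Psi(t)}^{\alpha,\lambda}$ and invokes part (b) of Theorem \ref{c1} exactly as the paper does, and your converse---in particular your second, ``equivalently'' formulation, where $\prescript{T}{}{\mathscr{D}}_{\Psi(t)}^{n,\lambda}$ annihilates $e^{-\lambda\Psi(t)}(\Psi(t)-\Psi(0))^k$ for $k<n$ so that the Caputo-type derivative (fractional integral composed with this operator) vanishes---is precisely the paper's argument from the definition. Your observation that Proposition \ref{Prop:specfunc1} cannot be cited here (its Caputo case requires $\xi>\lfloor\alpha\rfloor$, whereas $k\le n-1$) is a correct and useful clarification of why this converse must be argued directly, unlike the Riemann--Liouville case of Theorem \ref{Thm:kernelR}.
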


\begin{proof}
If \eqref{kernel:TC} holds, then $\prescript{TC}{0}{\mathscr{D}}_{\Psi(t)}^{\alpha, \lambda} \left[ f(t)-g(t)\right] =0$; applying the operator $\prescript{T}{0}{\mathscr{I}}_{\Psi(t)}^{\alpha, \lambda}$ on both sides, the result follows immediately from part (b) of Theorem \ref{c1}.

Conversely, if \eqref{881} holds, then applying the operator $\prescript{TC}{0}{\mathscr{D}}_{\Psi(t)}^{\alpha, \lambda}$ on both sides of \eqref{881} gives
\[
\prescript{TC}{0}{\mathscr{D}}_{\Psi(t)}^{\alpha, \lambda} \left[ f(t)-g(t)\right] = \sum_{k=0}^{n-1} c_k\,\prescript{TC}{0}{\mathscr{D}}_{\Psi(t)}^{\alpha, \lambda} \left[ e^{-\lambda\Psi(t)}(\Psi(t)-\Psi(0))^k\right].
\]
Since $k<n$ for all $k$, the $\alpha^{th}$ $\Psi$-tempered derivative of each term in square brackets is zero, so by the definition of the Caputo-type fractional derivative, the whole sum is zero.
\end{proof}

\begin{rem}
Taking $\lambda=0$ in Theorem \ref{Thm:kernelR} and Theorem \ref{Thm:kernelC} gives \cite[Theorem 6]{Almeida}.
\end{rem}

\section{Mean value theorem and Taylor's theorem} \label{Sec:MVTTT}

\subsection{Mean value theorems for $\Psi$-tempered operators}

In this subsection, we present three results concerning the $\Psi$-tempered operators. Each of these can be interpreted as a sort of mean value theorem: firstly for the $\Psi$-tempered fractional integral, and then respectively for the $\Psi$-tempered fractional derivatives of Caputo and Riemann--Liouville type.

\begin{theorem}\label{t22}
Let $\alpha\in(0,\infty)$ and $\lambda\in\mathbb{R}$. If $y\in C[0, b]$ and $t\in(0,b]$, then there exists $c\in(0,t)$ such that
\begin{equation}
\label{MVT:int}
\prescript{T}{0}{\mathscr{I}}_{\Psi(t)}^{\alpha, \lambda}\,y(t) \\ =\left(\Psi(t)-\Psi(0)\right)^{\alpha}e^{-\lambda(\Psi(t)-\Psi(0))} E_{1,\alpha+1}\Big( \lambda(\Psi(t)-\Psi(0))\Big)\cdot y(c).
\end{equation}
\end{theorem}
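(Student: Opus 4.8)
The plan is to recognise the right-hand side of \eqref{MVT:int} as the product of a value $y(c)$ with the quantity $\prescript{T}{0}{\mathscr{I}}_{\Psi(t)}^{\alpha, \lambda}(1)$ already computed in Proposition \ref{Coroll:1}, and to produce the point $c$ by applying the classical first mean value theorem for integrals to the defining integral of the $\Psi$-tempered fractional integral. First I would write the operator in its integral form from Definition \ref{Def:Psitempered}, isolating the kernel
\[
K(t,s)=\frac{1}{\Gamma(\alpha)}\,\Psi'(s)\left(\Psi(t)-\Psi(s)\right)^{\alpha-1}e^{-\lambda(\Psi(t)-\Psi(s))},
\]
so that $\prescript{T}{0}{\mathscr{I}}_{\Psi(t)}^{\alpha,\lambda}y(t)=\int_0^t K(t,s)\,y(s)\,\mathrm{d}s$.

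The central observation is that, for fixed $t\in(0,b]$ and $s\in(0,t)$, every factor of $K(t,s)$ is strictly positive: $\Psi'>0$ almost everywhere by hypothesis, $\Psi(t)-\Psi(s)>0$ by monotonicity so its $(\alpha-1)$-power is positive, the exponential is positive, and $\Gamma(\alpha)>0$ for $\alpha>0$. Thus $K(t,\cdot)$ is a nonnegative, integrable weight that does not change sign on $(0,t)$; the factor $(\Psi(t)-\Psi(s))^{\alpha-1}$ may be singular at $s=t$ when $\alpha<1$, but it remains integrable there, as is already visible from the explicit evaluation of $\int_0^t K(t,s)\,\mathrm{d}s$ used below. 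Next I would invoke the first mean value theorem for integrals with the continuous function $y$ and the sign-definite weight $K(t,\cdot)$: there exists $c\in(0,t)$ with
\[
\int_0^t K(t,s)\,y(s)\,\mathrm{d}s=y(c)\int_0^t K(t,s)\,\mathrm{d}s.
\]
To secure the point in the open interval $(0,t)$ rather than the closed interval, I would note that $K(t,\cdot)>0$ on $(0,t)$, so that if the integral average equalled $\max_{[0,t]}y$ or $\min_{[0,t]}y$ then $y$ would be forced to be constant on $(0,t)$ and any interior point would serve; otherwise the intermediate value theorem places $c$ strictly inside.

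The final step is to identify $\int_0^t K(t,s)\,\mathrm{d}s=\prescript{T}{0}{\mathscr{I}}_{\Psi(t)}^{\alpha,\lambda}(1)$ and substitute its closed form from Proposition \ref{Coroll:1}, namely $e^{-\lambda(\Psi(t)-\Psi(0))}(\Psi(t)-\Psi(0))^{\alpha}E_{1,\alpha+1}\big(\lambda(\Psi(t)-\Psi(0))\big)$, which yields exactly \eqref{MVT:int}. The routine computations are negligible, so the only point requiring genuine care is the sign-definiteness of the kernel together with the refinement that places $c$ in the open interval; the possible integrable singularity at $s=t$ when $\alpha\in(0,1)$ is harmless, since the weight stays nonnegative and integrable, which is precisely the hypothesis demanded by the mean value theorem for integrals.
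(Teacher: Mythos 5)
Your proposal is correct and follows essentially the same route as the paper: write out the defining integral, apply the first mean value theorem for integrals to the continuous function $y$ against the sign-definite kernel, and identify the remaining integral as $\prescript{T}{0}{\mathscr{I}}_{\Psi(t)}^{\alpha,\lambda}(1)$ via Proposition \ref{Coroll:1}. Your additional care about the positivity and integrable singularity of the kernel, and about placing $c$ in the open interval $(0,t)$, merely spells out details the paper leaves implicit.
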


\begin{proof}
By the definition of $\Psi$-tempered fractional integrals, we have
\[
\prescript{T}{0}{\mathscr{I}}_{\Psi(t)}^{\alpha, \lambda}\,y(t)=\frac{e^{-\lambda(\Psi(t)-\Psi(0))}}{\Gamma(\alpha)}\int_{0}^{t} \Psi'(s) \left(\Psi(t)-\Psi(s)\right)^{\alpha-1}e^{\lambda(\Psi(s)-\Psi(0))}\,y(s)\,\mathrm{d}s.
\]
Since $y$ is continuous, we can use the first mean value theorem for integrals to obtain that there exists $c\in(0,b)$ such that
\[
\prescript{T}{0}{\mathscr{I}}_{\Psi(t)}^{\alpha, \lambda}\,y(t)=y(c)\cdot\prescript{T}{0}{\mathscr{I}}_{\Psi(t)}^{\alpha,\lambda}\,(1),
\]
and then Proposition \ref{Coroll:1} gives the result.
\end{proof}

\begin{rem}
By taking $\lambda=0$ in Theorem \ref{t22}, we obtain \cite[Theorem 7]{Almeida}.
\end{rem}

\begin{cor}
If $\alpha\in(0,1)$, $\lambda\in\mathbb{R}$ and $y\in C^1[0, b]$, then for all $t\in(0, b]$, there exists some $c\in (0, t)$ such that
\[
\prescript{TC}{0}{\mathscr{D}}_{\Psi(t)}^{\alpha, \lambda}\,y(c)=\frac{e^{\lambda(\Psi(t)-\Psi(0))}y(t)-y(0)}{(\Psi(t)-\Psi(0))^\alpha E_{1,\, \alpha+1}(\lambda(\Psi(t)-\Psi(0)))}.
\]
\end{cor}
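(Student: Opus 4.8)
The plan is to apply the integral mean value theorem just proved (Theorem \ref{t22}) with the function $\prescript{TC}{0}{\mathscr{D}}_{\Psi(t)}^{\alpha, \lambda}\,y$ playing the role of the continuous integrand. Since $\alpha\in(0,1)$ we have $n=1$, so this Caputo-type derivative is $\prescript{TC}{0}{\mathscr{D}}_{\Psi(t)}^{\alpha, \lambda}\,y(t)=\prescript{T}{0}{\mathscr{I}}_{\Psi(t)}^{1-\alpha, \lambda}\,\prescript{T}{}{\mathscr{D}}_{\Psi(t)}^{1, \lambda}\,y(t)$. As $y\in C^1[0,b]$, the tempered first derivative $\prescript{T}{}{\mathscr{D}}_{\Psi(t)}^{1, \lambda}\,y=\frac{1}{\Psi'}y'+\lambda y$ is continuous, and the $\Psi$-tempered integral of a continuous function is continuous (as seen from the product relation \eqref{Tprod:I} together with the continuity of the $\Psi$-Riemann--Liouville integral of a continuous function); hence $\prescript{TC}{0}{\mathscr{D}}_{\Psi(t)}^{\alpha, \lambda}\,y\in C[0,b]$, which is exactly the hypothesis required by Theorem \ref{t22}.

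The key composition identity is obtained as follows. Applying $\prescript{T}{0}{\mathscr{I}}_{\Psi(t)}^{\alpha, \lambda}$ to $\prescript{TC}{0}{\mathscr{D}}_{\Psi(t)}^{\alpha, \lambda}\,y$, using the definition of the Caputo-type derivative, then the semigroup property \eqref{semi:I}, and finally the integer-order inversion formula \eqref{861} with $m=1$, I obtain
\[
\prescript{T}{0}{\mathscr{I}}_{\Psi(t)}^{\alpha, \lambda}\,\prescript{TC}{0}{\mathscr{D}}_{\Psi(t)}^{\alpha, \lambda}\,y(t)=\prescript{T}{0}{\mathscr{I}}_{\Psi(t)}^{1, \lambda}\,\prescript{T}{}{\mathscr{D}}_{\Psi(t)}^{1, \lambda}\,y(t)=y(t)-e^{-\lambda(\Psi(t)-\Psi(0))}y(0),
\]
each of these steps being valid for merely $C^1$ functions; this identity is also part (b) of Theorem \ref{c1} in the case $n=1$. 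On the other hand, Theorem \ref{t22} applied to the continuous function $\prescript{TC}{0}{\mathscr{D}}_{\Psi(t)}^{\alpha, \lambda}\,y$ furnishes some $c\in(0,t)$ with
\[
\prescript{T}{0}{\mathscr{I}}_{\Psi(t)}^{\alpha, \lambda}\,\prescript{TC}{0}{\mathscr{D}}_{\Psi(t)}^{\alpha, \lambda}\,y(t)=\left(\Psi(t)-\Psi(0)\right)^{\alpha}e^{-\lambda(\Psi(t)-\Psi(0))} E_{1,\alpha+1}\big( \lambda(\Psi(t)-\Psi(0))\big)\cdot\prescript{TC}{0}{\mathscr{D}}_{\Psi(t)}^{\alpha, \lambda}\,y(c).
\]

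Equating the two right-hand sides and solving for $\prescript{TC}{0}{\mathscr{D}}_{\Psi(t)}^{\alpha, \lambda}\,y(c)$, then multiplying numerator and denominator by $e^{\lambda(\Psi(t)-\Psi(0))}$, produces exactly the stated formula. The one point needing care, which I expect to be the main (though minor) obstacle, is the division in this last step: one must know that the denominator $\left(\Psi(t)-\Psi(0)\right)^{\alpha} E_{1,\alpha+1}\big( \lambda(\Psi(t)-\Psi(0))\big)$ does not vanish for $t\in(0,b]$. This is resolved by observing that $\prescript{T}{0}{\mathscr{I}}_{\Psi(t)}^{\alpha,\lambda}(1)$ is strictly positive on $(0,b]$, since its integrand $\Psi'(s)\left(\Psi(t)-\Psi(s)\right)^{\alpha-1}e^{-\lambda(\Psi(t)-\Psi(s))}$ is positive (using $\Psi'>0$); by Proposition \ref{Coroll:1} this integral equals the denominator multiplied by $e^{-\lambda(\Psi(t)-\Psi(0))}$, so the Mittag-Leffler factor is strictly positive and the division is legitimate.
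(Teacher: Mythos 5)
Your proposal is correct and follows essentially the same route as the paper: substitute $\prescript{TC}{0}{\mathscr{D}}_{\Psi(t)}^{\alpha,\lambda}y$ into Theorem \ref{t22} and simplify the left-hand side via the $n=1$ case of Theorem \ref{c1}(b) (which you re-derive from \eqref{semi:I} and \eqref{861}, but correctly identify as the same identity). Your additional checks — continuity of the Caputo-type derivative and strict positivity of the Mittag-Leffler factor, via positivity of $\prescript{T}{0}{\mathscr{I}}_{\Psi(t)}^{\alpha,\lambda}(1)$ — are sound details that the paper leaves implicit.
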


\begin{proof}
This follows from substituting $\prescript{TC}{0}{\mathscr{D}}_{\Psi(t)}^{\alpha, \lambda}\,y$ instead of $y$ in Theorem \ref{t22} and using part (b) of Theorem \ref{c1} to simplify the left-hand side of \eqref{MVT:int}.
\end{proof}

\begin{cor}
If $\alpha\in(0,1)$ and $\lambda\in\mathbb{R}$ and $y\in C^1[0, b]$, then for all $t\in(0, b]$, there exists some $c\in (0, t)$ such that
\[
\prescript{TR}{0}{\mathscr{D}}_{\Psi(t)}^{\alpha, \lambda}\,y(c)=\frac{e^{\lambda(\Psi(t)-\Psi(0))}y(t)-\frac{(\Psi(t)-\Psi(0))^{\alpha-1}}{\Gamma(\alpha)}\left[ \prescript{}{0}{\mathscr{I}}_{\Psi(t)}^{1-\alpha}\left( e^{\lambda(\Psi(t)-\Psi(0))}y(t)\right) \right]_{t=0} }{(\Psi(t)-\Psi(0))^\alpha E_{1,\, \alpha+1}(\lambda(\Psi(t)-\Psi(0)))}.
\]
\end{cor}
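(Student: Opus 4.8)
The plan is to follow the template of the preceding Caputo-type corollary, replacing Theorem~\ref{c1}(b) by the Riemann--Liouville inversion identity~\eqref{ID}. First I would apply the integral mean value theorem (Theorem~\ref{t22}) with the continuous-function slot occupied by $z=\prescript{TR}{0}{\mathscr{D}}_{\Psi(t)}^{\alpha, \lambda}\,y$ rather than by $y$ itself. This produces a point $c\in(0,t)$ with
\[
\prescript{T}{0}{\mathscr{I}}_{\Psi(t)}^{\alpha, \lambda}\,\prescript{TR}{0}{\mathscr{D}}_{\Psi(t)}^{\alpha, \lambda}\,y(t)=\left(\Psi(t)-\Psi(0)\right)^{\alpha}e^{-\lambda(\Psi(t)-\Psi(0))}E_{1,\alpha+1}\!\Big(\lambda(\Psi(t)-\Psi(0))\Big)\cdot\prescript{TR}{0}{\mathscr{D}}_{\Psi(t)}^{\alpha, \lambda}\,y(c),
\]
which isolates the desired quantity $\prescript{TR}{0}{\mathscr{D}}_{\Psi(t)}^{\alpha, \lambda}\,y(c)$ once the left-hand side is rewritten in closed form.

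Next I would simplify that left-hand side using~\eqref{ID}: since $\alpha\in(0,1)$ we have $n=1$, so the sum there collapses to its single term $k=1$ and gives $y(t)-e^{-\lambda\Psi(t)}\frac{(\Psi(t)-\Psi(0))^{\alpha-1}}{\Gamma(\alpha)}\big[\prescript{}{0}{\mathscr{I}}_{\Psi(t)}^{1-\alpha}\big(e^{\lambda\Psi(t)}y(t)\big)\big]_{t=0}$. The only bookkeeping required is with the exponential weights: writing $e^{\lambda\Psi(t)}=e^{\lambda\Psi(0)}e^{\lambda(\Psi(t)-\Psi(0))}$ and pulling the constant $e^{\lambda\Psi(0)}$ through the (linear) fractional integral turns $e^{-\lambda\Psi(t)}[\cdots]_{t=0}$ into $e^{-\lambda(\Psi(t)-\Psi(0))}\big[\prescript{}{0}{\mathscr{I}}_{\Psi(t)}^{1-\alpha}\big(e^{\lambda(\Psi(t)-\Psi(0))}y(t)\big)\big]_{t=0}$. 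Equating the two expressions and dividing by the Mittag-Leffler factor, then multiplying numerator and denominator by $e^{\lambda(\Psi(t)-\Psi(0))}$ to clear the surviving exponential from the denominator, yields precisely the stated quotient. I would also remark that, for $y\in C^1[0,b]$, the boundary constant $\big[\prescript{}{0}{\mathscr{I}}_{\Psi(t)}^{1-\alpha}(e^{\lambda(\Psi(t)-\Psi(0))}y(t))\big]_{t=0}$ is in fact $0$ by the bound underlying~\eqref{limit0:int}, so that the numerator actually simplifies to $e^{\lambda(\Psi(t)-\Psi(0))}y(t)$; retaining the term is harmless but redundant under the stated hypotheses, and it is exactly this (vanishing) contribution versus the $y(0)$ appearing in the Caputo case that records the difference between the two derivatives.

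The step needing the most care is the appeal to Theorem~\ref{t22}, whose hypothesis demands that its argument lie in $C[0,b]$. Unlike the Caputo-type derivative, which by~\eqref{limit0:Cap} extends continuously to $t=0$, the Riemann--Liouville-type derivative $\prescript{TR}{0}{\mathscr{D}}_{\Psi(t)}^{\alpha, \lambda}\,y$ is only continuous on $(0,b]$ and generically blows up like $(\Psi(t)-\Psi(0))^{-\alpha}$ as $t\to0^+$ whenever $y(0)\neq0$. Hence the first mean value theorem for integrals cannot be quoted verbatim, and I would instead argue from the integral representation used in proving Theorem~\ref{t22}: the weight $\Psi'(s)(\Psi(t)-\Psi(s))^{\alpha-1}e^{\lambda(\Psi(s)-\Psi(0))}$ is positive and integrable on $(0,t)$, and although the remaining factor $z(s)$ is unbounded at the lower endpoint, it is continuous on the connected set $(0,t]$, so its range is an interval. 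The weighted average of $z$ against that weight is a finite number lying between the infimum and supremum of $z$, and therefore, by the intermediate value property, is attained at some $c\in(0,t)$; this supplies the required mean value point and the computation above then goes through unchanged.
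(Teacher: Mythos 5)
Your proposal is correct and takes essentially the same route as the paper's proof: substitute $\prescript{TR}{0}{\mathscr{D}}_{\Psi(t)}^{\alpha,\lambda}\,y$ for $y$ in Theorem \ref{t22} and simplify the left-hand side of \eqref{MVT:int} using \eqref{ID}, with only exponential bookkeeping remaining. You also add two points that the paper's one-line proof passes over, and both are valid: first, the hypothesis of Theorem \ref{t22} genuinely fails here, since $\prescript{TR}{0}{\mathscr{D}}_{\Psi(t)}^{\alpha,\lambda}\,y$ is in general unbounded at $t=0$ when $y(0)\neq0$, and your replacement argument (positive integrable weight, absolutely convergent weighted integral, and the fact that the range of a continuous function on the connected set $(0,t]$ is an interval) is a correct repair of this gap; second, under the stated hypothesis $y\in C^1[0,b]$ the bracketed initial-value term in the numerator indeed vanishes by the estimate underlying \eqref{limit0:int}, so the displayed formula is correct but carries a redundant term, which as you say is precisely the term that distinguishes the Riemann--Liouville case from the Caputo one for less regular data.
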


\begin{proof}
This follows from substituting $\prescript{TR}{0}{\mathscr{D}}_{\Psi(t)}^{\alpha, \lambda}\,y$ instead of $y$ in Theorem \ref{t22} and using \eqref{ID} to simplify the left-hand side of \eqref{MVT:int}.
\end{proof}

\subsection{Taylor's theorem in the setting of $\Psi$-tempered fractional derivatives}

In this subsection, we present two results which can be interpreted as versions of Taylor's theorem for the $\Psi$-tempered fractional derivative operators, of Caputo and Riemann--Liouville type respectively. The proofs are following the lines of \cite{trujillo-rivero-bonilla,odibat-shawagfeh}.

\begin{theorem}
Let $\alpha\in(0, 1)$, $\lambda\in\mathbb{R}$, $n\in\N$, and  $f$ be any function such that $\prescript{TC}{0}{\mathscr{D}}_{\Psi(t)}^{k\alpha, \lambda}f$ exists and is continuous for all $k=0, 1,\, 2, \cdots, n+1$. Then, for all $t\in[0, b]$, we have
\begin{multline*}
f(t)=e^{-\lambda(\Psi(t)-\Psi(0))}\Bigg\lbrace \sum_{k=0}^{n} \frac{(\Psi(t)-\Psi(0))^{k\alpha}}{\Gamma(k\alpha+1)}\cdot\prescript{TC}{0}{\mathscr{D}}_{\Psi(t)}^{k\alpha, \lambda}f(0)\\
+(\Psi(t)-\Psi(0))^{(n+1)\alpha}E_{1,\, (n+1)\alpha+1}\Big(\lambda(\Psi(t)-\Psi(0))\Big)\cdot\left(\prescript{TC}{0}{\mathscr{D}}_{\Psi(t)}^{\alpha, \lambda}\right)^{n+1}f(c)\Bigg\rbrace,
\end{multline*}
for some $c\in(0,t)$.
\end{theorem}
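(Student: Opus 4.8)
The plan is to adapt the telescoping argument of \cite{odibat-shawagfeh,trujillo-rivero-bonilla} to the $\Psi$-tempered Caputo setting, building the expansion by repeatedly peeling off one derivative of order $\alpha$ at a time and then invoking the mean value theorem of Theorem \ref{t22} to produce the Lagrange-type remainder together with its Mittag-Leffler factor. Throughout I will abbreviate $I^{\beta}:=\prescript{T}{0}{\mathscr{I}}_{\Psi(t)}^{\beta,\lambda}$ and $D^{\beta}:=\prescript{TC}{0}{\mathscr{D}}_{\Psi(t)}^{\beta,\lambda}$, and set $w(t):=e^{-\lambda(\Psi(t)-\Psi(0))}$, noting $w(0)=1$. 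The three operator facts I will use are: the order-$\alpha$ inversion coming from part (b) of Theorem \ref{c1} with $n=1$, namely $I^{\alpha}D^{\alpha}y(t)=y(t)-w(t)\,y(0)$ valid for $\alpha\in(0,1)$; the semigroup law \eqref{semi:I}; and the special-function evaluation of Proposition \ref{Prop:specfunc1} at $\xi=0$, which gives $I^{k\alpha}\!\left[w\right](t)=\frac{w(t)\,(\Psi(t)-\Psi(0))^{k\alpha}}{\Gamma(k\alpha+1)}$ after factoring out the constant $e^{\lambda\Psi(0)}$.

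Before telescoping, the essential preliminary — and the step I expect to be the main obstacle — is the composition (semigroup) law for the Caputo-type derivatives of the orders involved, namely $D^{(k+1)\alpha}f=D^{\alpha}\big(D^{k\alpha}f\big)$ on $(0,b]$ for each $k=0,1,\dots,n$, whence by induction $D^{k\alpha}f=\big(D^{\alpha}\big)^{k}f$. This is what reconciles the order-$k\alpha$ derivatives appearing in the hypotheses and in the finite sum with the iterated operator $\big(D^{\alpha}\big)^{n+1}$ appearing in the remainder, and it simultaneously guarantees that each iterate $\big(D^{\alpha}\big)^{k}f$ is well defined and continuous (so that the inversion, whose argument must lie in $AC_{\Psi}^{1}[0,b]$, may be applied). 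Caputo derivatives do not compose freely in general, so the point is precisely that the assumed existence and continuity of $D^{k\alpha}f$ for all $k\le n+1$ force the boundary terms obstructing composition to vanish. I would establish this by conjugation: using \eqref{Tprod:C} together with the $Q_{\Psi}$-conjugation of Proposition \ref{Conjugationrelations}, the identity reduces to the corresponding statement for the classical Caputo derivative, which is the composition lemma underlying \cite{odibat-shawagfeh} (equivalently a consequence of the Riemann--Liouville--Caputo relation, cf.\ Theorem \ref{Thm:RLCrel} and \cite{Diethelm}).

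With the composition law in hand, I would set $T_{k}(t):=I^{k\alpha}D^{k\alpha}f(t)$, so that $T_{0}=f$. For each $k$ I would compute, using $D^{(k+1)\alpha}f=D^{\alpha}(D^{k\alpha}f)$, then the semigroup law to split $I^{(k+1)\alpha}=I^{k\alpha}I^{\alpha}$, then the inversion applied to $y=D^{k\alpha}f$, and finally the evaluation of $I^{k\alpha}[w]$:
\begin{align*}
T_{k+1}(t)&=I^{k\alpha}I^{\alpha}D^{\alpha}\big(D^{k\alpha}f\big)(t)=I^{k\alpha}\Big[D^{k\alpha}f-w\,D^{k\alpha}f(0)\Big](t)\\
&=T_{k}(t)-D^{k\alpha}f(0)\cdot\frac{w(t)\,(\Psi(t)-\Psi(0))^{k\alpha}}{\Gamma(k\alpha+1)}.
\end{align*}
Summing $T_{k}-T_{k+1}$ over $k=0,\dots,n$ telescopes the left-hand side to $f(t)-T_{n+1}(t)$ and yields
\[
f(t)=w(t)\sum_{k=0}^{n}\frac{(\Psi(t)-\Psi(0))^{k\alpha}}{\Gamma(k\alpha+1)}\,D^{k\alpha}f(0)+T_{n+1}(t).
\]

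It then remains to evaluate the remainder $T_{n+1}(t)=I^{(n+1)\alpha}\big[D^{(n+1)\alpha}f\big](t)$. Since $D^{(n+1)\alpha}f$ is continuous by hypothesis, Theorem \ref{t22} (the mean value theorem for the $\Psi$-tempered integral, applied with order $(n+1)\alpha$) supplies a point $c\in(0,t)$ with
\[
T_{n+1}(t)=w(t)\,(\Psi(t)-\Psi(0))^{(n+1)\alpha}\,E_{1,(n+1)\alpha+1}\big(\lambda(\Psi(t)-\Psi(0))\big)\cdot D^{(n+1)\alpha}f(c),
\]
which is exactly the Mittag-Leffler form of the remainder. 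Finally, factoring out the common $w(t)=e^{-\lambda(\Psi(t)-\Psi(0))}$ and rewriting $D^{(n+1)\alpha}f=\big(D^{\alpha}\big)^{n+1}f$ by the composition law produces the stated formula; the case $t=0$ holds trivially, since then the remainder and every term with $k\ge1$ vanish. The only genuinely delicate point is the composition law of the second paragraph; once that is secured, the remaining steps are the routine telescoping above and a single application of Theorem \ref{t22}.
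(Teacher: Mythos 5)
Your overall architecture --- telescoping the quantities $I^{k\alpha}D^{k\alpha}f$, inverting one $I^{\alpha}D^{\alpha}$ pair via Theorem \ref{c1}(b), evaluating $I^{k\alpha}$ of the exponential by Proposition \ref{Prop:specfunc1} with $\xi=0$, and finishing with Theorem \ref{t22} for the remainder --- is exactly the paper's proof, and those steps are correct. The genuine gap is the ``composition law'' paragraph that you yourself flag as the main obstacle: the claim that the assumed existence and continuity of the \emph{genuine} order-$k\alpha$ Caputo-type derivatives forces $\prescript{TC}{0}{\mathscr{D}}_{\Psi(t)}^{(k+1)\alpha,\lambda}f=\prescript{TC}{0}{\mathscr{D}}_{\Psi(t)}^{\alpha,\lambda}\big(\prescript{TC}{0}{\mathscr{D}}_{\Psi(t)}^{k\alpha,\lambda}f\big)$. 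This is false, and no conjugation argument can rescue it, because the underlying classical statement is already wrong. Take $\Psi(t)=t$, $\lambda=0$, $f(t)=t$, $\alpha=0.6$, $n+1=3$: then $\prescript{C}{0}{\mathscr{D}}_{t}^{0.6}f(t)=t^{0.4}/\Gamma(1.4)$ and $\prescript{C}{0}{\mathscr{D}}_{t}^{1.2}f=\prescript{C}{0}{\mathscr{D}}_{t}^{1.8}f=0$ (since $f''=0$), so $\prescript{C}{0}{\mathscr{D}}_{t}^{k\alpha}f$ exists and is continuous for all $k=0,1,2,3$; yet
\begin{equation*}
\prescript{C}{0}{\mathscr{D}}_{t}^{0.6}\Big(\prescript{C}{0}{\mathscr{D}}_{t}^{0.6}f\Big)(t)=\frac{t^{-0.2}}{\Gamma(0.8)}\neq 0=\prescript{C}{0}{\mathscr{D}}_{t}^{1.2}f(t),
\end{equation*}
so continuity of the true Caputo derivatives of orders $k\alpha$ does not kill the boundary obstruction to composition. (Note that in this example the iterate $\big(\prescript{C}{0}{\mathscr{D}}_{t}^{0.6}\big)^{2}f$ blows up at $0$, so it is the hypotheses, not the conclusion, that fail under the iterated reading.)

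The resolution is that no composition law is needed at all: in this theorem, as in the sources \cite{trujillo-rivero-bonilla,odibat-shawagfeh} it follows, the symbol $\prescript{TC}{0}{\mathscr{D}}_{\Psi(t)}^{k\alpha,\lambda}$ denotes the $k$-fold iterate (sequential derivative) $\big(\prescript{TC}{0}{\mathscr{D}}_{\Psi(t)}^{\alpha,\lambda}\big)^{k}$, both in the hypotheses and in the finite sum --- exactly as it visibly does in the remainder term. The paper's proof works entirely with iterates: it telescopes $\big(\prescript{T}{0}{\mathscr{I}}_{\Psi(t)}^{\alpha,\lambda}\big)^{k}\big(\prescript{TC}{0}{\mathscr{D}}_{\Psi(t)}^{\alpha,\lambda}\big)^{k}f$, using the semigroup property \eqref{semi:I} only on the integral side (where it genuinely holds), and never composes Caputo-type derivatives of different orders. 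If you replace $D^{k\alpha}$ by $\big(D^{\alpha}\big)^{k}$ throughout your third paragraph, your second paragraph can be deleted and the rest of your argument is the paper's proof verbatim.
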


\begin{proof}
Firstly, we find the difference between the functions given by \[\left(\prescript{T}{0}{\mathscr{I}}_{\Psi(t)}^{\alpha, \lambda}\right)^k\left(\prescript{TC}{0}{\mathscr{D}}_{\Psi(t)}^{\alpha, \lambda}\right)^kf(t)\] for different values of $k$. Using the semigroup property \eqref{semi:I} for $\Psi$-tempered integrals and part (b) of Theorem \ref{c1} in the case $n=1$, we obtain:
\begin{align*}
&\left(\prescript{T}{0}{\mathscr{I}}_{\Psi(t)}^{\alpha, \lambda}\right)^k\left(\prescript{TC}{0}{\mathscr{D}}_{\Psi(t)}^{\alpha, \lambda}\right)^kf(t)-\left(\prescript{T}{0}{\mathscr{I}}_{\Psi(t)}^{\alpha, \lambda}\right)^{k+1}\left(\prescript{TC}{0}{\mathscr{D}}_{\Psi(t)}^{\alpha, \lambda}\right)^{k+1}f(t) \\
&\hspace{1cm}=\prescript{T}{0}{\mathscr{I}}_{\Psi(t)}^{k\alpha, \lambda}\left[ \left(\prescript{TC}{0}{\mathscr{D}}_{\Psi(t)}^{\alpha, \lambda}\right)^kf(t) -\prescript{T}{0}{\mathscr{I}}_{\Psi(t)}^{\alpha, \lambda}\,\prescript{TC}{0}{\mathscr{D}}_{\Psi(t)}^{\alpha, \lambda}\left(\prescript{TC}{0}{\mathscr{D}}_{\Psi(t)}^{\alpha, \lambda}\right)^kf(t)\right] \\
&\hspace{1cm}=\prescript{T}{0}{\mathscr{I}}_{\Psi(t)}^{k\alpha, \lambda}\Bigg[ \left(\prescript{TC}{0}{\mathscr{D}}_{\Psi(t)}^{\alpha, \lambda}\right)^kf(t) \\ &\hspace{3cm}-\left\lbrace\left(\prescript{TC}{0}{\mathscr{D}}_{\Psi(t)}^{\alpha, \lambda}\right)^kf(t)- e^{-\lambda(\Psi(t)-\Psi(0))}\cdot\left(\prescript{TC}{0}{\mathscr{D}}_{\Psi(t)}^{\alpha, \lambda}\right)^kf(0)\right\rbrace  \Bigg] \\
&\hspace{1cm}=\bigg(\prescript{T}{0}{\mathscr{I}}_{\Psi(t)}^{k\alpha, \lambda} e^{-\lambda(\Psi(t)-\Psi(0))}\bigg)\cdot\left(\prescript{TC}{0}{\mathscr{D}}_{\Psi(t)}^{\alpha, \lambda}\right)^kf(0) \\
&\hspace{1cm}=\bigg(e^{-\lambda(\Psi(t)-\Psi(0))}\cdot\prescript{}{0}{\mathscr{I}}_{\Psi(t)}^{k\alpha}\,(1) \bigg)\cdot\left(\prescript{TC}{0}{\mathscr{D}}_{\Psi(t)}^{\alpha, \lambda}\right)^kf(0) \\
&\hspace{1cm}= e^{-\lambda(\Psi(t)-\Psi(0))}\,\frac{(\Psi(t)-\Psi(0))^{k\alpha}}{\Gamma(k\alpha+1)}\cdot\left(\prescript{TC}{0}{\mathscr{D}}_{\Psi(t)}^{\alpha, \lambda}\right)^kf(0).
\end{align*}
Summing this result over $k$ from $0$ to $n$, we obtain
\begin{multline} \label{834}
f(t)-\left(\prescript{T}{0}{\mathscr{I}}_{\Psi(t)}^{\alpha, \lambda}\right)^{n+1}\left(\prescript{TC}{0}{\mathscr{D}}_{\Psi(t)}^{\alpha, \lambda}\right)^{n+1}f(t) \\ =e^{-\lambda(\Psi(t)-\Psi(0))}\,\sum_{k=0}^{n} \,\frac{(\Psi(t)-\Psi(0))^{k\alpha}}{\Gamma(k\alpha+1)}\cdot\left(\prescript{TC}{0}{\mathscr{D}}_{\Psi(t)}^{\alpha, \lambda}\right)^kf(0).
\end{multline}
By Theorem \ref{t22}, there exists $c\in(0,t)$ such that
\begin{multline} \label{835}
\left(\prescript{T}{0}{\mathscr{I}}_{\Psi(t)}^{\alpha, \lambda}\right)^{n+1}\left(\prescript{TC}{0}{\mathscr{D}}_{\Psi(t)}^{\alpha, \lambda}\right)^{n+1}f(t) \\ =\left(\Psi(t)-\Psi(0)\right)^{(n+1)\alpha}e^{-\lambda(\Psi(t)-\Psi(0))} E_{1,(n+1)\alpha+1}\Big( \lambda(\Psi(t)-\Psi(0))\Big)\cdot \left(\prescript{TC}{0}{\mathscr{D}}_{\Psi(t)}^{\alpha, \lambda}\right)^{n+1}f(c).
\end{multline}
Using the equation \eqref{835} in the equation \eqref{834}, we obtain the stated result.
\end{proof}

\begin{theorem}
Let $\alpha\in(0, 1)$, $\lambda\in\mathbb{R}$, $n\in\N$, and  $f$ be any function such that $\prescript{TR}{0}{\mathscr{D}}_{\Psi(t)}^{k\alpha, \lambda}f$ exists and is continuous for all $k=0, 1,\, 2, \cdots, n+1$. Then, for all $t\in[0, b]$, we have
\begin{multline*}
f(t)=e^{-\lambda(\Psi(t)-\Psi(0))}\left\lbrace \sum_{k=0}^{n} \frac{(\Psi(t)-\Psi(0))^{(k+1)\alpha-1}}{\Gamma((k+1)\alpha)} \left[ \prescript{T}{0}{\mathscr{I}}_{\Psi(t)}^{1-\alpha}\left( e^{\lambda(\Psi(t)-\Psi(0))}\left(\prescript{TR}{0}{\mathscr{D}}_{\Psi(t)}^{\alpha, \lambda}\right)^kf(t)\right) \right]_{t=0}\right.\\
\left.+(\Psi(t)-\Psi(0))^{(n+1)\alpha}E_{1,\, (n+1)\alpha+1}\Big(\lambda(\Psi(t)-\Psi(0))\Big)\cdot\left(\prescript{TR}{0}{\mathscr{D}}_{\Psi(t)}^{\alpha, \lambda}\right)^{n+1}f(c)\right\rbrace,
\end{multline*}
for some $c\in(0,t)$.
\end{theorem}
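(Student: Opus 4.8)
The plan is to mirror the proof of the preceding Caputo-type Taylor theorem, replacing the Caputo inversion identity (part (b) of Theorem \ref{c1}) by the Riemann--Liouville inversion identity \eqref{ID} specialised to the case $n=1$. First I would examine, for each $k$, the telescoping difference
\[
\left(\prescript{T}{0}{\mathscr{I}}_{\Psi(t)}^{\alpha, \lambda}\right)^k\left(\prescript{TR}{0}{\mathscr{D}}_{\Psi(t)}^{\alpha, \lambda}\right)^kf(t)-\left(\prescript{T}{0}{\mathscr{I}}_{\Psi(t)}^{\alpha, \lambda}\right)^{k+1}\left(\prescript{TR}{0}{\mathscr{D}}_{\Psi(t)}^{\alpha, \lambda}\right)^{k+1}f(t).
\]
Using the integral semigroup property \eqref{semi:I} to collapse $\left(\prescript{T}{0}{\mathscr{I}}_{\Psi(t)}^{\alpha, \lambda}\right)^k$ into the single operator $\prescript{T}{0}{\mathscr{I}}_{\Psi(t)}^{k\alpha, \lambda}$, this difference reduces to $\prescript{T}{0}{\mathscr{I}}_{\Psi(t)}^{k\alpha, \lambda}$ applied to $g-\prescript{T}{0}{\mathscr{I}}_{\Psi(t)}^{\alpha, \lambda}\,\prescript{TR}{0}{\mathscr{D}}_{\Psi(t)}^{\alpha, \lambda}\,g$, where $g=\left(\prescript{TR}{0}{\mathscr{D}}_{\Psi(t)}^{\alpha, \lambda}\right)^kf$.

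By \eqref{ID} with $n=1$, this inner expression equals a single boundary correction term, namely $e^{-\lambda\Psi(t)}\,\frac{(\Psi(t)-\Psi(0))^{\alpha-1}}{\Gamma(\alpha)}\,c_k$, where $c_k=\left[\prescript{}{0}{\mathscr{I}}_{\Psi(t)}^{1-\alpha}\left(e^{\lambda\Psi(t)}g\right)\right]_{t=0}$ is a constant. I would then apply $\prescript{T}{0}{\mathscr{I}}_{\Psi(t)}^{k\alpha, \lambda}$ to this exponential-times-power function and invoke the first identity of Proposition \ref{Prop:specfunc1} with $\xi=\alpha-1$ (legitimate since $\alpha>0$ guarantees $\xi>-1$); this raises the exponent to $(k+1)\alpha-1$ and yields the denominator $\Gamma((k+1)\alpha)$ after the factor $\Gamma(\alpha)$ cancels. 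The outcome is precisely the $k$-th summand appearing in the statement, once the constants $e^{\pm\lambda\Psi(0)}$ are reconciled so that $c_k$ becomes the displayed boundary term.

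Summing over $k=0,1,\dots,n$ telescopes the left-hand side to $f(t)-\left(\prescript{T}{0}{\mathscr{I}}_{\Psi(t)}^{\alpha, \lambda}\right)^{n+1}\left(\prescript{TR}{0}{\mathscr{D}}_{\Psi(t)}^{\alpha, \lambda}\right)^{n+1}f(t)$, while the right-hand side produces the claimed finite sum. For the remainder, I would use \eqref{semi:I} once more to write $\left(\prescript{T}{0}{\mathscr{I}}_{\Psi(t)}^{\alpha, \lambda}\right)^{n+1}=\prescript{T}{0}{\mathscr{I}}_{\Psi(t)}^{(n+1)\alpha, \lambda}$ and then apply the integral mean value theorem, Theorem \ref{t22}, to the function $\left(\prescript{TR}{0}{\mathscr{D}}_{\Psi(t)}^{\alpha, \lambda}\right)^{n+1}f$, which is continuous by hypothesis. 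This produces the factor $(\Psi(t)-\Psi(0))^{(n+1)\alpha}e^{-\lambda(\Psi(t)-\Psi(0))}E_{1,(n+1)\alpha+1}(\lambda(\Psi(t)-\Psi(0)))$ together with a point $c\in(0,t)$, and rearranging gives the result.

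The main obstacle, relative to the Caputo case, lies in the boundary correction term. Whereas the Caputo inversion produced the clean point value $\left(\prescript{TC}{0}{\mathscr{D}}_{\Psi(t)}^{\alpha, \lambda}\right)^kf(0)$, the Riemann--Liouville inversion \eqref{ID} yields the more delicate quantity $c_k$ built from $\prescript{}{0}{\mathscr{I}}_{\Psi(t)}^{1-\alpha}$ evaluated at the lower limit, which is responsible for the shifted exponent $(k+1)\alpha-1$ and the Gamma argument $(k+1)\alpha$ rather than $k\alpha$ and $k\alpha+1$. The accompanying subtlety is purely bookkeeping: Proposition \ref{Prop:specfunc1} is phrased with $e^{-\lambda\Psi(t)}$ whereas the statement uses $e^{-\lambda(\Psi(t)-\Psi(0))}$, so the constant factors $e^{\pm\lambda\Psi(0)}$ must be absorbed consistently into $c_k$ so that it matches the boundary term $\left[\prescript{}{0}{\mathscr{I}}_{\Psi(t)}^{1-\alpha}\left(e^{\lambda(\Psi(t)-\Psi(0))}\left(\prescript{TR}{0}{\mathscr{D}}_{\Psi(t)}^{\alpha, \lambda}\right)^kf\right)\right]_{t=0}$ written in the theorem.
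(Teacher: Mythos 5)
Your proposal is correct and follows essentially the same route as the paper's own proof: the same telescoping of $\left(\prescript{T}{0}{\mathscr{I}}_{\Psi(t)}^{\alpha,\lambda}\right)^k\left(\prescript{TR}{0}{\mathscr{D}}_{\Psi(t)}^{\alpha,\lambda}\right)^k f$, collapsed via the semigroup property \eqref{semi:I}, with the inner difference evaluated by \eqref{ID} in the case $n=1$, the resulting boundary term propagated by Proposition \ref{Prop:specfunc1} with $\xi=\alpha-1$, and the remainder handled by \eqref{semi:I} plus the mean value theorem of Theorem \ref{t22}. Your remarks on the exponent shift $(k+1)\alpha-1$ and the reconciliation of the $e^{\pm\lambda\Psi(0)}$ factors match exactly what the paper does implicitly in passing from its intermediate identity to the stated theorem.
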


\begin{proof}
Firstly, we find the difference between the functions given by \[\left(\prescript{T}{0}{\mathscr{I}}_{\Psi(t)}^{\alpha, \lambda}\right)^k\left(\prescript{TR}{0}{\mathscr{D}}_{\Psi(t)}^{\alpha, \lambda}\right)^kf(t)\] for different values of $k$. Using the semigroup property \eqref{semi:I} for $\Psi$-tempered integrals, and then the property \eqref{ID} and Proposition \ref{Prop:specfunc1}, we obtain:
\begin{align*}
&\left(\prescript{T}{0}{\mathscr{I}}_{\Psi(t)}^{\alpha, \lambda}\right)^k\left(\prescript{TR}{0}{\mathscr{D}}_{\Psi(t)}^{\alpha, \lambda}\right)^kf(t)-\left(\prescript{T}{0}{\mathscr{I}}_{\Psi(t)}^{\alpha, \lambda}\right)^{k+1}\left(\prescript{TR}{0}{\mathscr{D}}_{\Psi(t)}^{\alpha, \lambda}\right)^{k+1}f(t) \\
&\hspace{0.5cm}=\prescript{T}{0}{\mathscr{I}}_{\Psi(t)}^{k\alpha, \lambda}\left[ \left(\prescript{TR}{0}{\mathscr{D}}_{\Psi(t)}^{\alpha, \lambda}\right)^kf(t) -\prescript{T}{0}{\mathscr{I}}_{\Psi(t)}^{\alpha, \lambda}\,\prescript{TR}{0}{\mathscr{D}}_{\Psi(t)}^{\alpha, \lambda}\left(\prescript{TR}{0}{\mathscr{D}}_{\Psi(t)}^{\alpha, \lambda}\right)^kf(t)\right] \\
&\hspace{0.5cm}=\prescript{T}{0}{\mathscr{I}}_{\Psi(t)}^{k\alpha, \lambda}\Bigg[ \left(\prescript{TR}{0}{\mathscr{D}}_{\Psi(t)}^{\alpha, \lambda}\right)^kf(t)-\bigg\lbrace\left(\prescript{TR}{0}{\mathscr{D}}_{\Psi(t)}^{\alpha, \lambda}\right)^kf(t) \\ &\hspace{1.5cm}- e^{-\lambda\Psi(t)}\frac{(\Psi(t)-\Psi(0))^{\alpha-1}}{\Gamma(\alpha)}\left[\prescript{}{0}{\mathscr{I}}_{\Psi(t)}^{1-\alpha}\left( e^{\lambda\Psi(t)}\left(\prescript{TR}{0}{\mathscr{D}}_{\Psi(t)}^{\alpha, \lambda}\right)^kf(t)\right) \right]_{t=0}\bigg\rbrace  \Bigg] \\
&\hspace{0.5cm}=\prescript{T}{0}{\mathscr{I}}_{\Psi(t)}^{k\alpha, \lambda}\left(\frac{(\Psi(t)-\Psi(0))^{\alpha-1}}{\Gamma(\alpha)} e^{-\lambda\Psi(t)}\right)\cdot\left[\prescript{}{0}{\mathscr{I}}_{\Psi(t)}^{1-\alpha}\left( e^{\lambda\Psi(t)}\left(\prescript{TR}{0}{\mathscr{D}}_{\Psi(t)}^{\alpha, \lambda}\right)^kf(t)\right) \right]_{t=0} \\
&\hspace{0.5cm}=\frac{(\Psi(t)-\Psi(0))^{(k+1)\alpha-1}}{\Gamma((k+1)\alpha)} e^{-\lambda\Psi(t)}\cdot\left[\prescript{}{0}{\mathscr{I}}_{\Psi(t)}^{1-\alpha}\left( e^{\lambda\Psi(t)}\left(\prescript{TR}{0}{\mathscr{D}}_{\Psi(t)}^{\alpha, \lambda}\right)^kf(t)\right) \right]_{t=0}.
\end{align*}
Summing this result over $k$ from $0$ to $n$, we obtain
\begin{multline} \label{844}
f(t)-\left(\prescript{T}{0}{\mathscr{I}}_{\Psi(t)}^{\alpha, \lambda}\right)^{n+1}\left(\prescript{TR}{0}{\mathscr{D}}_{\Psi(t)}^{\alpha, \lambda}\right)^{n+1}f(t) \\ =e^{-\lambda\Psi(t)}\,\sum_{k=0}^{n} \,\frac{(\Psi(t)-\Psi(0))^{(k+1)\alpha-1}}{\Gamma((k+1)\alpha)} \cdot\left[\prescript{}{0}{\mathscr{I}}_{\Psi(t)}^{1-\alpha}\left( e^{\lambda\Psi(t)}\left(\prescript{TR}{0}{\mathscr{D}}_{\Psi(t)}^{\alpha, \lambda}\right)^kf(t)\right) \right]_{t=0}.
\end{multline}
By Theorem \ref{t22}, there exists $c\in(0,t)$ such that
\begin{multline} \label{845}
\left(\prescript{T}{0}{\mathscr{I}}_{\Psi(t)}^{\alpha, \lambda}\right)^{n+1}\left(\prescript{TR}{0}{\mathscr{D}}_{\Psi(t)}^{\alpha, \lambda}\right)^{n+1}f(t) \\ =\left(\Psi(t)-\Psi(0)\right)^{(n+1)\alpha}e^{-\lambda(\Psi(t)-\Psi(0))} E_{1,(n+1)\alpha+1}\Big( \lambda(\Psi(t)-\Psi(0))\Big)\cdot \left(\prescript{TR}{0}{\mathscr{D}}_{\Psi(t)}^{\alpha, \lambda}\right)^{n+1}f(c).
\end{multline}
Using the equation \eqref{845} in  the equation \eqref{844}, we obtain the stated result.
\end{proof}

%%%%%%%%%%%%%%%%%%%%%%%%%%%%%%%%%%%%%%%%%%%%%%%%%%%%%
\section{Fractional differential equations} \label{Sec:fde}

As far as the matter of solving fractional differential equations is concerned, there are numerous possible methods and even different types of conclusions that can be drawn, from exact constructed solutions to approximate numerical solutions to qualitative properties of solutions to simplify the existence and uniqueness of solutions. In the current section, we will consider how such studies and results can be done in the setting of $\Psi$-tempered fractional differential equations.

\subsection{Existence and uniqueness of solutions}

Often, existence and uniqueness are proved by using fixed point theorems such as the Banach contraction principle, and a useful tool enabling us to apply such theorems is to find equivalences between fractional initial value problems and fractional integral equations. In the setting of $\Psi$-tempered fractional differential equations, we shall first prove two such equivalences, one for Riemann--Liouville type differential equations and one for Caputo type differential equations, and then proceed to prove existence-uniqueness theorems using the Banach contraction principle.

%$\Psi$-tempered derivative of Caputo type
%\subsection{Equivalent Fractional Integral Equation}

\begin{theorem}\label{c3}
	Let $\alpha\in(0,1)$, $\lambda\in\mathbb{R}$, and  $f:(0, b]\times \R\rightarrow\R$ be a function such that $f(\cdot\,,y(\cdot))\in C_{1-\alpha\,; \,\Psi}([0, b],\,\R)$, for every $y\in C_{1-\alpha\,; \,\Psi}([0, b],\,\R)$. Then, the $\Psi$-tempered fractional differential equation of Riemann--Liouville type
	\begin{align}
\prescript{TR}{0}{\mathscr{D}}_{\Psi(t)}^{\alpha, \lambda}\,y(t)&=f(t, y(t)),\quad t\in(0, b],\label{871}
	\end{align} 
	with initial condition
	\begin{align}
		\left[\prescript{T}{0}{\mathscr{I}}_{\Psi(t)}^{1-\alpha, \lambda}\left( e^{\lambda\Psi(t)}y(t)\right) \right] _{t=0}&=y_0\in\R, \label{872}
	\end{align} 
	is equivalent to the fractional integral equation
	\begin{equation}\label{873}
		y(t)= y_0\,e^{-\lambda\Psi(t)} \frac{(\Psi(t)-\Psi(0))^{\alpha-1}}{\Gamma(\alpha)} +  \prescript{T}{0}{\mathscr{I}}_{\Psi(t)}^{\alpha, \lambda} f(t, y(t)),\quad t\in(0, b],
	\end{equation}
	both to be solved for functions $y\in C_{1-\alpha\,; \,\Psi}([0, b],\,\R)$.
\end{theorem}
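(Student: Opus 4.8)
The plan is to prove the two implications of the equivalence in turn, using only the inversion and semigroup identities recorded in Section~\ref{preliminaries}; since $\alpha\in(0,1)$ we always have $n=1$, so the finite sums appearing in \eqref{ID} reduce to a single boundary term. For the direction \eqref{871}--\eqref{872}$\,\Rightarrow\,$\eqref{873}, I would apply $\prescript{T}{0}{\mathscr{I}}_{\Psi(t)}^{\alpha,\lambda}$ to both sides of \eqref{871}. The right-hand side becomes $\prescript{T}{0}{\mathscr{I}}_{\Psi(t)}^{\alpha,\lambda}f(t,y(t))$, and the left-hand side is evaluated by the inversion relation \eqref{ID} with $n=1$:
\[
\prescript{T}{0}{\mathscr{I}}_{\Psi(t)}^{\alpha,\lambda}\,\prescript{TR}{0}{\mathscr{D}}_{\Psi(t)}^{\alpha,\lambda}\,y(t)=y(t)-e^{-\lambda\Psi(t)}\frac{(\Psi(t)-\Psi(0))^{\alpha-1}}{\Gamma(\alpha)}\Big[\prescript{}{0}{\mathscr{I}}_{\Psi(t)}^{1-\alpha}\big(e^{\lambda\Psi(t)}y(t)\big)\Big]_{t=0}.
\]
Rearranging already produces \eqref{873}, provided the bracketed boundary constant is identified with $y_0$. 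This identification is the crux, because \eqref{ID} produces the untempered $\Psi$-Riemann--Liouville integral $\prescript{}{0}{\mathscr{I}}_{\Psi(t)}^{1-\alpha}$, whereas the initial condition \eqref{872} is posed with the tempered integral $\prescript{T}{0}{\mathscr{I}}_{\Psi(t)}^{1-\alpha,\lambda}$.

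I would close this gap by showing that the two evaluations at $t=0$ coincide. Subtracting the two integrals, the integrand acquires the factor $e^{-\lambda(\Psi(t)-\Psi(s))}-1=O(\Psi(t)-\Psi(s))$, which contributes one extra power of $\Psi(t)-\Psi(s)$ and so cancels the endpoint singularity; since $y\in C_{1-\alpha;\Psi}[0,b]$ forces $e^{\lambda\Psi(t)}y(t)=O\big((\Psi(t)-\Psi(0))^{\alpha-1}\big)$ near the origin, the resulting integral tends to $0$ as $t\to0^{+}$. Hence the tempered and untempered $\Psi$-integrals of order $1-\alpha$ of $e^{\lambda\Psi(t)}y(t)$ share the same limiting value at $t=0$, so the boundary constant equals $y_0$ by \eqref{872}, and \eqref{873} follows.

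The converse \eqref{873}$\,\Rightarrow\,$\eqref{871}--\eqref{872} runs by applying $\prescript{TR}{0}{\mathscr{D}}_{\Psi(t)}^{\alpha,\lambda}$ to \eqref{873}: the leading term is annihilated because Proposition~\ref{Prop:specfunc1} with $\xi=\alpha-1$ sends $e^{-\lambda\Psi(t)}(\Psi(t)-\Psi(0))^{\alpha-1}$ to a multiple of $1/\Gamma(0)=0$, while the $\alpha=\beta$ case of \eqref{DI} turns $\prescript{TR}{0}{\mathscr{D}}_{\Psi(t)}^{\alpha,\lambda}\prescript{T}{0}{\mathscr{I}}_{\Psi(t)}^{\alpha,\lambda}f$ into $f$, recovering \eqref{871}. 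For the initial condition I would apply $\prescript{}{0}{\mathscr{I}}_{\Psi(t)}^{1-\alpha}$ to $e^{\lambda\Psi(t)}y(t)$ (equal to the tempered version at $t=0$ by the same limiting identity), rewrite $e^{\lambda\Psi(t)}\prescript{T}{0}{\mathscr{I}}_{\Psi(t)}^{\alpha,\lambda}f=\prescript{}{0}{\mathscr{I}}_{\Psi(t)}^{\alpha}\big(e^{\lambda\Psi(t)}f\big)$ via \eqref{Tprod:I}, and then read off $y_0$: the power-function rule gives $y_0$ from the leading term, while the semigroup property $\prescript{}{0}{\mathscr{I}}_{\Psi(t)}^{1-\alpha}\prescript{}{0}{\mathscr{I}}_{\Psi(t)}^{\alpha}=\prescript{}{0}{\mathscr{I}}_{\Psi(t)}^{1}$ reduces the remaining term to an ordinary integral that vanishes at $t=0$.

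The remaining verifications are routine: that \eqref{873} lies in $C_{1-\alpha;\Psi}[0,b]$ (the leading term manifestly, and $\prescript{T}{0}{\mathscr{I}}_{\Psi(t)}^{\alpha,\lambda}f$ by the mapping properties of the tempered integral from \cite{Fahad}), and that all operators applied above act within their stated domains. I expect the main obstacle to be the boundary-term identification in the forward direction --- namely justifying that the tempered and untempered $\Psi$-integrals of order $1-\alpha$ agree in the limit $t\to0^{+}$ on $C_{1-\alpha;\Psi}[0,b]$ --- after which the argument is just bookkeeping with the semigroup, inversion, and power-function identities.
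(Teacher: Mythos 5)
Your proposal is correct, but it takes a genuinely different route from the paper. The paper's proof is essentially a two-line transfer argument: it notes that the corresponding equivalence for untempered $\Psi$-Riemann--Liouville equations is already known (the case $\beta=0$ of the equivalence in \cite{Sousa}), and then pulls that result across to the tempered setting via the conjugation relations of Proposition \ref{Conjugationrelations}, i.e.\ $y$ solves the tempered problem if and only if $e^{\lambda\Psi(t)}y(t)$ solves the untempered one. You instead give a self-contained direct argument from the inversion identity \eqref{ID} (with $n=1$), the $\alpha=\beta$ case of \eqref{DI}, and the power-function rules of Proposition \ref{Prop:specfunc1}, which is more work but makes the mechanism visible. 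Your ``crux'' lemma is correct: the difference between $\prescript{T}{0}{\mathscr{I}}_{\Psi(t)}^{1-\alpha,\lambda}g(t)$ and $\prescript{}{0}{\mathscr{I}}_{\Psi(t)}^{1-\alpha}g(t)$ for $g(t)=O\big((\Psi(t)-\Psi(0))^{\alpha-1}\big)$ carries the factor $e^{-\lambda(\Psi(t)-\Psi(s))}-1=O(\Psi(t)-\Psi(s))$, and the resulting Beta-type integral gives a bound of order $\Psi(t)-\Psi(0)\to0$, so the two evaluations at $t=0$ agree. It is worth noting that this lemma is not merely an artifact of your approach: the paper's own conjugation argument naturally produces the initial condition in the form $\big[\prescript{}{0}{\mathscr{I}}_{\Psi(t)}^{1-\alpha}\big(e^{\lambda\Psi(t)}y(t)\big)\big]_{t=0}=y_0$ (untempered integral), whereas the theorem's condition \eqref{872} is phrased with the tempered integral $\prescript{T}{0}{\mathscr{I}}_{\Psi(t)}^{1-\alpha,\lambda}$; reconciling the two forms is exactly your limiting identity, which the paper leaves implicit. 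In short, the paper's route buys brevity and inherits the function-space bookkeeping from the cited reference, while yours buys independence from that reference, transparency of the algebra, and an explicit justification of the boundary-term identification that the published proof glosses over.
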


\begin{proof}
The corresponding result for $\Psi$-Riemann--Liouville fractional integro-differential equations is an immediate consequence of setting $ \beta=0 $ in the equivalence between \cite[Eq. (22)--(23)]{Sousa} and \cite[Eq. (26)]{Sousa}. Then, using this result together with the conjugation relations given by Proposition \ref{Conjugationrelations}, we obtain the required result.
\end{proof}

\begin{theorem}\label{c2}
Let $\alpha\in(0,1)$, $\lambda\in\mathbb{R}$, and $f\in C\left( [0, b]\times \R, \R\right) $. Then, the $\Psi$-tempered fractional differential equation of Caputo type
\begin{align}
\prescript{TC}{0}{\mathscr{D}}_{\Psi(t)}^{\alpha, \lambda}y(t)&=f(t, y(t)),\quad t\in(0, b],\label{865}
\end{align} 
with initial condition
\begin{align}
	y(0)&=y_0\in\R, \label{866}
\end{align} 
is equivalent to the fractional integral equation
\begin{equation}\label{867}
y(t)= e^{-\lambda(\Psi(t)-\Psi(0))} y_0 + \prescript{T}{0}{\mathscr{I}}_{\Psi(t)}^{\alpha, \lambda} f(t, y(t)),\quad t\in[0, b],
\end{equation}
both to be solved for functions $y\in AC_{\Psi}[0, b]$.
\end{theorem}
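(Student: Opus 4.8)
The plan is to mirror the proof of Theorem \ref{c3}, reducing the $\Psi$-tempered Caputo problem to an already-established equivalence for $\Psi$-Caputo fractional differential equations by means of the conjugation relation \eqref{Tprod:C}. First I would set $w(t)=e^{\lambda\Psi(t)}y(t)$ and use \eqref{Tprod:C} to rewrite the equation \eqref{865} as $\prescript{C}{0}{\mathscr{D}}_{\Psi(t)}^{\alpha}w(t)=e^{\lambda\Psi(t)}f\big(t,e^{-\lambda\Psi(t)}w(t)\big)$. Since $f\in C([0,b]\times\R,\R)$ and the factors $e^{\pm\lambda\Psi(t)}$ are continuous and strictly positive, the transformed right-hand side $\tilde f(t,w):=e^{\lambda\Psi(t)}f\big(t,e^{-\lambda\Psi(t)}w\big)$ is again continuous, so the transformed problem is a genuine $\Psi$-Caputo initial value problem. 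The condition \eqref{866} becomes $w(0)=e^{\lambda\Psi(0)}y_0$, and since multiplication by the smooth positive factor $e^{\pm\lambda\Psi}$ preserves $AC_{\Psi}[0,b]$, solving for $y\in AC_{\Psi}[0,b]$ is equivalent to solving for $w\in AC_{\Psi}[0,b]$.

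Next I would invoke the known equivalence for $\Psi$-Caputo equations (the $\lambda=0$ case, available from \cite{Almeida} or as the $\beta=0$ specialisation of \cite{Sousa} already used in Theorem \ref{c3}), which states that $w$ solves the above $\Psi$-Caputo problem if and only if $w(t)=e^{\lambda\Psi(0)}y_0+\prescript{}{0}{\mathscr{I}}_{\Psi(t)}^{\alpha}\tilde f(t,w(t))$. Multiplying through by $e^{-\lambda\Psi(t)}$, the constant term becomes $e^{-\lambda(\Psi(t)-\Psi(0))}y_0$, while the integral term becomes $e^{-\lambda\Psi(t)}\prescript{}{0}{\mathscr{I}}_{\Psi(t)}^{\alpha}\big(e^{\lambda\Psi(t)}f(t,y(t))\big)$, which is exactly $\prescript{T}{0}{\mathscr{I}}_{\Psi(t)}^{\alpha,\lambda}f(t,y(t))$ by \eqref{Tprod:I}. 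This recovers \eqref{867} precisely, completing the equivalence.

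A fully self-contained alternative, using only Theorem \ref{c1}, is also available. For the forward direction I would apply $\prescript{T}{0}{\mathscr{I}}_{\Psi(t)}^{\alpha,\lambda}$ to \eqref{865} and use part (b) of Theorem \ref{c1} in the case $n=1$; the single surviving boundary term is $e^{-\lambda(\Psi(t)-\Psi(0))}\big[\prescript{T}{}{\mathscr{D}}_{\Psi(t)}^{0,\lambda}y\big]_{t=0}=e^{-\lambda(\Psi(t)-\Psi(0))}y(0)=e^{-\lambda(\Psi(t)-\Psi(0))}y_0$, which yields \eqref{867}. For the reverse direction I would apply $\prescript{TC}{0}{\mathscr{D}}_{\Psi(t)}^{\alpha,\lambda}$ to \eqref{867}: the term $e^{-\lambda(\Psi(t)-\Psi(0))}y_0$, being a constant multiple of $e^{-\lambda\Psi(t)}$, lies in the kernel of the Caputo-type operator by Theorem \ref{Thm:kernelC}, and part (a) of Theorem \ref{c1} turns $\prescript{TC}{0}{\mathscr{D}}_{\Psi(t)}^{\alpha,\lambda}\prescript{T}{0}{\mathscr{I}}_{\Psi(t)}^{\alpha,\lambda}f(t,y(t))$ back into $f(t,y(t))$. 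The initial condition is then recovered by evaluating \eqref{867} at $t=0$, where the integral term vanishes by \eqref{limit0:int}.

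The step I expect to require most care is the reverse direction in the self-contained argument: part (a) of Theorem \ref{c1} is stated for $C^n$ functions, whereas here $f(\cdot,y(\cdot))$ is merely continuous. This is not a genuine obstruction, since the proof of that part uses only continuity together with \eqref{limit0:int}, but it must be flagged explicitly; the conjugation route sidesteps the issue entirely, because the cited $\Psi$-Caputo equivalence is already established for continuous right-hand sides. In either approach one must also confirm that both formulations are being solved over the same space $AC_{\Psi}[0,b]$, which follows from the invariance of this space under multiplication by the smooth positive factor $e^{\pm\lambda\Psi}$.
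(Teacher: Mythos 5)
Your first route is precisely the paper's own proof: the paper disposes of this theorem in one line by citing the known equivalence for $\Psi$-Caputo problems (\cite[Theorem 2]{Ricardo}) together with the conjugation relations of Proposition \ref{Conjugationrelations}, and your substitution $w(t)=e^{\lambda\Psi(t)}y(t)$ with \eqref{Tprod:C} and \eqref{Tprod:I} is exactly that reduction, carried out explicitly. One small correction to your citations: the $\beta=0$ specialisation of \cite{Sousa} is the Riemann--Liouville-type equivalence used in Theorem \ref{c3}, not the Caputo-type one (that would be the type-$1$ case of the Hilfer parameter), so your fallback reference is misplaced, though your primary appeal to the Almeida-school result is the right one and matches the paper. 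Your self-contained alternative via Theorem \ref{c1}, Theorem \ref{Thm:kernelC} and \eqref{limit0:int} is a genuine addition not present in the paper, and it buys independence from external FDE literature at the cost of the regularity subtlety you correctly flag: part (a) of Theorem \ref{c1} is stated for $C^n$ functions, but its proof in the paper only uses continuity of the integrand together with \eqref{limit0:int}, so applying it to the merely continuous function $f(\cdot,y(\cdot))$ is legitimate once this is noted.
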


\begin{proof}
Using the result proved in \cite[Theorem~2]{Ricardo} together with the conjugation relations given by Proposition \ref{Conjugationrelations}, we obtain the required result.
\end{proof}

%\begin{rem}
%For $\Psi(t)=t$ and $\lambda=0$, the equivalent  fractional integral equations obtained in the Theorem \ref{c2} and Theorem \ref{c3} reduces to the  fractional integral equations obtained in \cite{Diethelm} for RL FDEs and Caputo FDEs respectively.
% \end{rem}

%\subsection{Existence of Solution}
\begin{theorem}\label{53}
	Let $\alpha,\lambda,f$ be as in Theorem \ref{c3}, and further assume that $\alpha\in(\frac{1}{2},1)$, $\lambda\geq0$ and that $f$ satisfies the following Lipschitz condition with respect to the second variable, for some fixed $L>0$:
	\begin{equation}\label{901}
		\left| f(t, u)- f(t, v)\right| \leq L \left| u-v\right|,\quad u, v\in\R,\quad t\in[0, b].
	\end{equation}
	Then, the initial value problem \eqref{871}--\eqref{872} has a unique solution $y\in C_{1-\alpha;\,\Psi} ([0, b],\,\R)$, provided that the constant $L$ is less than
	\begin{equation}\label{904}
\frac{\Gamma(2\alpha)}{\Gamma(\alpha)}\left( \Psi(b)-\Psi(0)\right)^{-\alpha} e^{-\lambda\Psi(b)}.
	\end{equation}
\end{theorem}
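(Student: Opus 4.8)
The plan is to recast the problem as a fixed-point equation and apply the Banach contraction principle. By Theorem \ref{c3}, solving the initial value problem \eqref{871}--\eqref{872} in $C_{1-\alpha;\,\Psi}[0,b]$ is equivalent to solving the integral equation \eqref{873}, so I would define the operator $T$ on the Banach space $C_{1-\alpha;\,\Psi}[0,b]$ by
\[
(Ty)(t)=y_0\,e^{-\lambda\Psi(t)}\frac{(\Psi(t)-\Psi(0))^{\alpha-1}}{\Gamma(\alpha)}+\prescript{T}{0}{\mathscr{I}}_{\Psi(t)}^{\alpha,\lambda}f(t,y(t)),
\]
so that solutions of \eqref{873} are exactly the fixed points of $T$. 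It then suffices to show that $T$ maps this space into itself and is a contraction.

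First I would verify the self-mapping property. Multiplying the first term of $(Ty)(t)$ by the norm weight $(\Psi(t)-\Psi(0))^{1-\alpha}$ leaves $\tfrac{y_0}{\Gamma(\alpha)}e^{-\lambda\Psi(t)}$, which is continuous on $[0,b]$; for the second term, the hypothesis $f(\cdot,y(\cdot))\in C_{1-\alpha;\,\Psi}[0,b]$ together with the same Beta-function estimate used below shows that $(\Psi(t)-\Psi(0))^{1-\alpha}\,\prescript{T}{0}{\mathscr{I}}_{\Psi(t)}^{\alpha,\lambda}f(t,y(t))$ is continuous and tends to $0$ as $t\to0^+$. Hence $Ty\in C_{1-\alpha;\,\Psi}[0,b]$.

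The crucial step is the contraction estimate. For $y_1,y_2\in C_{1-\alpha;\,\Psi}[0,b]$ the inhomogeneous term cancels, so using the product formula \eqref{Tprod:I} I would write
\[
(Ty_1-Ty_2)(t)=\frac{e^{-\lambda\Psi(t)}}{\Gamma(\alpha)}\int_0^t\Psi'(s)\,(\Psi(t)-\Psi(s))^{\alpha-1}e^{\lambda\Psi(s)}\big[f(s,y_1(s))-f(s,y_2(s))\big]\,\mathrm{d}s.
\]
The Lipschitz condition \eqref{901} and the definition of the norm give $|f(s,y_1(s))-f(s,y_2(s))|\leq L(\Psi(s)-\Psi(0))^{\alpha-1}\|y_1-y_2\|_{C_{1-\alpha;\,\Psi}[0,b]}$. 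Since $\lambda\geq0$, I bound the exponential factors by $e^{-\lambda\Psi(t)}\leq1$ and $e^{\lambda\Psi(s)}\leq e^{\lambda\Psi(b)}$ for $0\leq s\leq t\leq b$, reducing the problem to the integral $\int_0^t\Psi'(s)(\Psi(t)-\Psi(s))^{\alpha-1}(\Psi(s)-\Psi(0))^{\alpha-1}\,\mathrm{d}s$. The substitution $u=\Psi(s)-\Psi(0)$ converts this to a Beta integral equal to $\frac{\Gamma(\alpha)^2}{\Gamma(2\alpha)}(\Psi(t)-\Psi(0))^{2\alpha-1}$. Multiplying by the weight $(\Psi(t)-\Psi(0))^{1-\alpha}$ and taking the supremum over $t\in[0,b]$ then yields
\[
\|Ty_1-Ty_2\|_{C_{1-\alpha;\,\Psi}[0,b]}\leq\frac{L\,\Gamma(\alpha)}{\Gamma(2\alpha)}(\Psi(b)-\Psi(0))^{\alpha}e^{\lambda\Psi(b)}\,\|y_1-y_2\|_{C_{1-\alpha;\,\Psi}[0,b]},
\]
and the hypothesis that $L$ is less than \eqref{904} makes the constant strictly below $1$. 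The Banach contraction principle then gives a unique fixed point, i.e. a unique solution.

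The main obstacle is precisely this contraction estimate: the integrand carries two competing power-law singularities, one of order $\alpha-1$ at $s=0$ coming from the weighted norm and one of order $\alpha-1$ at $s=t$ coming from the fractional kernel, and it is their reduction to a single Beta integral that produces the factor $\Gamma(\alpha)^2/\Gamma(2\alpha)$ and hence the sharp threshold in \eqref{904}. Keeping track of the exponentials rather than discarding them is what yields the factor $e^{\lambda\Psi(b)}$ in the contraction constant (equivalently $e^{-\lambda\Psi(b)}$ in the bound on $L$), while the assumption $\alpha>\tfrac12$ makes the exponent $2\alpha-1$ positive, so that the transformed integral stays bounded near the origin and the estimates above remain clean.
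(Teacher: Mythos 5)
Your proposal is correct and follows essentially the same route as the paper's proof: both pass to the integral equation \eqref{873} via Theorem \ref{c3}, define the same fixed-point operator on $C_{1-\alpha;\,\Psi}[0,b]$, and establish the contraction estimate using the Lipschitz condition, the weighted norm, the bounds $e^{-\lambda\Psi(t)}\leq 1$, $e^{\lambda\Psi(s)}\leq e^{\lambda\Psi(b)}$, and the Beta-type integral giving $\frac{\Gamma(\alpha)^2}{\Gamma(2\alpha)}(\Psi(t)-\Psi(0))^{2\alpha-1}$ (which the paper obtains by citing the power rule for the $\Psi$-Riemann--Liouville integral rather than by direct substitution). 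The only quibble is your explanation of the hypothesis $\alpha>\tfrac12$: the Beta integral converges for all $\alpha>0$, and in the paper this hypothesis is used to bound $(\Psi(t)-\Psi(0))^{2\alpha-1}$ by $(\Psi(b)-\Psi(0))^{2\alpha-1}$ after having already replaced the weight $(\Psi(t)-\Psi(0))^{1-\alpha}$ by its supremum, a step your ordering of the estimates (combining the powers into $(\Psi(t)-\Psi(0))^{\alpha}$ before taking the supremum) in fact renders unnecessary.
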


	\begin{proof}
		Consider the operator  $\mathscr{B}:  C_{1-\alpha;\,\Psi} \left([0, b], \R \right) \to C_{1-\alpha;\,\Psi} \left([0, b], \R \right)$ defined by
		\begin{multline*}
			\mathscr{B}y(t)=(\Psi(t)-\Psi(0))^{\alpha-1}\,e^{-\lambda\Psi(t)}\frac{y_0}{\Gamma(\alpha)}\\
			+\frac{e^{-\lambda\Psi(t)}}{\Gamma(\alpha)}\int_{0}^{t} \Psi'(s) \left( \Psi(t)-\Psi(s)\right)^{\alpha-1}e^{\lambda\Psi(s)}f(s, y(s)) \,\mathrm{d}s,\quad t\in (0, b].
		\end{multline*}
		Then, the equation \eqref{873} can be written as $y=\mathscr{B}y$.
		
For any $u, v\in C_{1-\alpha;\,\Psi}[0, b]$ and $t\in[0, b]$, we have
		\begin{multline*}
\Big|(\Psi(t)-\Psi(0))^{1-\alpha}\left(  \mathscr{B}u(t)-\mathscr{B}v(t)\right) \Big|\\
\leq \frac{(\Psi(t)-\Psi(0))^{1-\alpha}e^{-\lambda\Psi(t)}}{\Gamma(\alpha)}\int_{0}^{t} \Psi'(s) \left( \Psi(t)-\Psi(s)\right)^{\alpha-1}e^{\lambda\Psi(s)}\left| f(s, u(s))-f(s, v(s)) \right| \,\mathrm{d}s.
		\end{multline*}
		Using the Lipschitz condition \eqref{901} along with the non-decreasing nature of $\Psi$ on $[0, b]$, we obtain
		\begin{align*}
			\Big|(\Psi(t)&-\Psi(0))^{1-\alpha}\left(  \mathscr{B}u(t)-\mathscr{B}v(t)\right) \Big|\\
			&\leq \frac{L(\Psi(b)-\Psi(0))^{1-\alpha}}{\Gamma(\alpha)}\int_{0}^{t} \Psi'(s) \left( \Psi(t)-\Psi(s)\right)^{\alpha-1}e^{\lambda\Psi(s)}(\Psi(s)-\Psi(0))^{\alpha-1}\\
			&\hspace{5cm}\times\Big| (\Psi(s)-\Psi(0))^{1-\alpha}\left(  u(s)-v(s)\right) \Big| \,\mathrm{d}s\\
			&\leq L(\Psi(b)-\Psi(0))^{1-\alpha}e^{\lambda\Psi(b)} \prescript{}{0}{\mathscr{I}}_{\Psi(t)}^{\alpha} \left( \Psi(t)-\Psi(0)\right)^{\alpha-1}\left\|u-v\right\|_{C_{1-\alpha;\,\Psi}[0, b]} \\
			&=L(\Psi(b)-\Psi(0))^{1-\alpha}e^{\lambda\Psi(b)} \cdot\frac{\Gamma(\alpha)}{\Gamma(2\alpha)}(\Psi(t)-\Psi(0))^{2\alpha-1}\left\|u-v\right\|_{C_{1-\alpha;\,\Psi}[0, b]}\\
			&\leq L\,\frac{\Gamma(\alpha)}{\Gamma(2\alpha)} e^{\lambda\Psi(b)}\left( \Psi(b)-\Psi(0)\right)^{\alpha}\left\|u-v\right\|_{C_{1-\alpha;\,\Psi}[0, b]}.
		\end{align*}
Note that the non-decreasing nature of $\Psi$ together with the condition $\alpha>\frac{1}{2}$ is used to conclude that $(\Psi(t)-\Psi(0))^{2\alpha-1}\leq(\Psi(b)-\Psi(0))^{2\alpha-1}$. Taking the supremum over $t\in(0,b]$, we obtain
		\[
		\left\| \mathscr{B}u- \mathscr{B}v\right\|_{C_{1-\alpha;\,\Psi}[0, b]} \leq L\,\frac{\Gamma(\alpha)}{\Gamma(2\alpha)} e^{\lambda\Psi(b)}\left( \Psi(b)-\Psi(0)\right)^{\alpha}\left\|u-v\right\|_{C_{1-\alpha;\,\Psi}[0, b]}.
		\]
If $L$ is less than the expression \eqref{904}, this means $\mathscr{B}$ is a contraction. Then, by the Banach contraction principle, the initial value problem \eqref{871}--\eqref{872} has a unique solution $y\in C_{1-\alpha;\,\Psi}[0, b]$.
	\end{proof}

\begin{theorem}\label{thm 54}
If $\alpha, f$ are as in Theorem \ref{c2}, $\lambda\geq0$ and $f$ satisfies the Lipschitz condition \eqref{901}, then the initial value problem \eqref{865}--\eqref{866} has a unique solution $y\in AC_{\Psi}[0, b]$, provided that $L$ is less than
\begin{equation}\label{902}
\Gamma(\alpha+1)\left( \Psi(b)-\Psi(0)\right)^{-\alpha} e^{-\lambda\left( \Psi(b)-\Psi(0)\right)}.
\end{equation}
\end{theorem}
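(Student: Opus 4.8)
The plan is to follow the same strategy used for the Riemann--Liouville case in Theorem \ref{53}, but working now in the unweighted space $C[0, b]$ equipped with the supremum norm $\|\cdot\|_\infty$, since the Caputo-type integral equation \eqref{867} carries no singular weight at $t=0$. First I would invoke Theorem \ref{c2} to replace the initial value problem \eqref{865}--\eqref{866} by the equivalent integral equation \eqref{867}, so that it suffices to produce a unique continuous solution of \eqref{867}. The equivalence then guarantees that such a solution lies in $AC_{\Psi}[0, b]$ and solves the original IVP; and since $AC_{\Psi}[0, b]\subset C[0, b]$, uniqueness in $C[0, b]$ immediately yields uniqueness in $AC_{\Psi}[0, b]$.

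Next I would define the operator $\mathscr{A}\colon C[0, b]\to C[0, b]$ by
\[
\mathscr{A}y(t)=e^{-\lambda(\Psi(t)-\Psi(0))}\,y_0+\prescript{T}{0}{\mathscr{I}}_{\Psi(t)}^{\alpha,\lambda}f(t, y(t)),
\]
noting that $\mathscr{A}$ is well-defined because $f$ is continuous, so $f(\cdot, y(\cdot))\in C[0, b]$, and the $\Psi$-tempered integral of a continuous function is continuous on $[0, b]$, vanishing at $t=0$ by \eqref{limit0:int}. The fixed points of $\mathscr{A}$ are exactly the solutions of \eqref{867}. As $C[0, b]$ is complete, the Banach contraction principle finishes the proof once $\mathscr{A}$ is shown to be a contraction.

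The core of the argument is the contraction estimate. For $u, v\in C[0, b]$ the constant term cancels, so writing the $\Psi$-tempered integral in the product form
\[
\prescript{T}{0}{\mathscr{I}}_{\Psi(t)}^{\alpha,\lambda}g(t)=\frac{e^{-\lambda(\Psi(t)-\Psi(0))}}{\Gamma(\alpha)}\int_0^t\Psi'(s)\big(\Psi(t)-\Psi(s)\big)^{\alpha-1}e^{\lambda(\Psi(s)-\Psi(0))}g(s)\,\mathrm{d}s
\]
and applying the Lipschitz condition \eqref{901}, I would bound $|\mathscr{A}u(t)-\mathscr{A}v(t)|$ from above. The key step, which uses $\lambda\geq0$ and the monotonicity of $\Psi$, is to estimate the exponential factors by $e^{-\lambda(\Psi(t)-\Psi(0))}\leq1$ and $e^{\lambda(\Psi(s)-\Psi(0))}\leq e^{\lambda(\Psi(b)-\Psi(0))}$ for $0\leq s\leq t\leq b$; the remaining integral evaluates to $\prescript{}{0}{\mathscr{I}}_{\Psi(t)}^{\alpha}(1)=(\Psi(t)-\Psi(0))^{\alpha}/\Gamma(\alpha+1)$, which is at most $(\Psi(b)-\Psi(0))^{\alpha}/\Gamma(\alpha+1)$. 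Taking the supremum over $t\in[0, b]$ then yields
\[
\|\mathscr{A}u-\mathscr{A}v\|_\infty\leq \frac{L\,e^{\lambda(\Psi(b)-\Psi(0))}\,(\Psi(b)-\Psi(0))^{\alpha}}{\Gamma(\alpha+1)}\,\|u-v\|_\infty,
\]
and the contraction constant is strictly less than $1$ precisely when $L$ is less than the quantity \eqref{902}.

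I do not expect a serious obstacle here, as this argument is structurally simpler than its Riemann--Liouville counterpart: neither a weighted norm nor the restriction $\alpha>\tfrac12$ is needed. The only points deserving care are the bookkeeping of the exponential factors, which must be bounded in the direction dictated by $\lambda\geq0$ in order to recover exactly the stated constant \eqref{902}, and the verification that the unique continuous fixed point is genuinely the $AC_{\Psi}[0, b]$ solution of the IVP, which is supplied by the equivalence established in Theorem \ref{c2}.
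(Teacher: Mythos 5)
Your proposal is correct and follows essentially the same route as the paper: the same fixed-point operator built from the integral equation \eqref{867}, the same exponential bounds exploiting $\lambda\geq0$ and the monotonicity of $\Psi$, the same contraction constant $L\,e^{\lambda(\Psi(b)-\Psi(0))}(\Psi(b)-\Psi(0))^{\alpha}/\Gamma(\alpha+1)$, and the Banach contraction principle to conclude. The only difference is cosmetic but in your favour: you iterate in the complete space $C[0,b]$ with the supremum norm and recover $AC_{\Psi}[0,b]$ regularity from the equivalence in Theorem \ref{c2}, whereas the paper works with $AC_{\Psi}[0,b]$ under the supremum norm, which is a slightly less careful choice of ambient Banach space.
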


\begin{proof}
Consider the Banach space of continuous functions $C = AC_\Psi[0, b]$, endowed with the supremum norm $\left\|\cdot\right\|_\infty$, and define the operator  $\mathscr{B}:  C \to C$ by 
\begin{multline*}
\mathscr{B}y(t)=e^{-\lambda(\Psi(t)-\Psi(0))} y_0\\
+\frac{e^{-\lambda\Psi(t)}}{\Gamma(\alpha)}\int_{0}^{t} \Psi'(s) \left( \Psi(t)-\Psi(s)\right)^{\alpha-1}e^{\lambda\Psi(s)}f(s, y(s)) \,\mathrm{d}s,\quad t\in [0, b].
\end{multline*}
Then, the equation \eqref{867} can be written as $y=\mathscr{B}y$.

For any $u, v\in C$ and $t\in[0, b]$, we have
\[
\big| \mathscr{B}u(t)-\mathscr{B}v(t)\big| \leq \frac{e^{-\lambda\Psi(t)}}{\Gamma(\alpha)}\int_{0}^{t} \Psi'(s) \left( \Psi(t)-\Psi(s)\right)^{\alpha-1}e^{\lambda\Psi(s)}\big| f(s, u(s))-f(s, v(s)) \big|\,\mathrm{d}s.
\]
Using the Lipschitz condition \eqref{901} along with the non-decreasing nature of $\Psi$ on $[0, b]$, we obtain
\begin{align*}
\big| \mathscr{B}u(t)-\mathscr{B}v(t)\big|
&\leq \frac{L}{\Gamma(\alpha)}\int_{0}^{t} \Psi'(s) \left( \Psi(t)-\Psi(s)\right)^{\alpha-1}e^{\lambda\left( \Psi(s)-\Psi(0)\right)}\big|u(s)-v(s)\big|\,\mathrm{d}s \\
&\leq\left\|u-v\right\|_{\infty}\frac{L\, e^{\lambda\left( \Psi(b)-\Psi(0)\right)}}{\Gamma(\alpha)}\int_{0}^{t} \Psi'(s) \left( \Psi(t)-\Psi(s)\right)^{\alpha-1} \,\mathrm{d}s\\
&=\left\|u-v\right\|_{\infty}\frac{L\, e^{\lambda\left( \Psi(b)-\Psi(0)\right)}}{\Gamma(\alpha+1)}\left( \Psi(t)-\Psi(0)\right)^\alpha\\
&\leq\frac{L\, e^{\lambda\left( \Psi(b)-\Psi(0)\right)}}{\Gamma(\alpha+1)}\left( \Psi(b)-\Psi(0)\right)^\alpha\left\|u-v\right\|_{\infty}.
\end{align*}
Taking the supremum over $t\in(0,b]$, this gives
\[
\left\| \mathscr{B}u- \mathscr{B}v\right\|_\infty \leq L\frac{\, e^{\lambda\left( \Psi(b)-\Psi(0)\right)}\left( \Psi(b)-\Psi(0)\right)^\alpha}{\Gamma(\alpha+1)}\left\|u-v\right\|_{\infty}.
\]
If $L$ is less than the expression \eqref{902}, this means $\mathscr{B}$ is a contraction. Then, by the Banach contraction principle, the initial value problem \eqref{865}--\eqref{866} has a unique solution $y\in AC_\Psi[0, b]$.
\end{proof}

\subsection{Gr\"onwall's inequality and stability results}

\begin{theorem}\label{uhr}
	Let $\alpha\in(0,\infty)$ and $ \lambda \in \R $. Assume that $u,v\in L^1([0,b],\mathrm{d}\Psi)$ are non-negative and $ w:[0,b]\to\mathbb{R}^+_0=[0, \infty)$ is non-negative, non-decreasing, and continuous. If
	\begin{equation*}
		u(t) \leq v(t) + w(t)\cdot \int_{0}^{t} \Psi'(s)\left(\Psi(t)-\Psi(s)\right)^{\alpha -1}e^{-\lambda\left(\Psi(t)-\Psi(s)\right)}\,u\left( s\right) \,\mathrm{d}s,\quad t\in[0,b],
	\end{equation*}
	then
	\begin{equation*}
		u(t) \leq v(t) +  \int_{0}^{t} \sum_{k=1}^{\infty}\frac{[{w(t) \Gamma(\alpha)}]^{k}}{\Gamma(\alpha k)}\Psi'(s)\left(\Psi(t)-\Psi(s)\right)^{\alpha k -1}e^{-\lambda\left(\Psi(t)-\Psi(s)\right)}\,v\left( s\right) \,\mathrm{d}s, \quad t \in [0,b].
	\end{equation*}
\end{theorem}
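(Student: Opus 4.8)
The plan is to adapt the classical iteration argument for fractional Gr\"onwall inequalities to the $\Psi$-tempered kernel. Define the linear integral operator $B$ acting on nonnegative functions by
\[
B\phi(t) = w(t)\int_0^t \Psi'(s)\big(\Psi(t)-\Psi(s)\big)^{\alpha-1}e^{-\lambda(\Psi(t)-\Psi(s))}\phi(s)\,\mathrm{d}s,
\]
so that the hypothesis reads $u \leq v + Bu$ pointwise on $[0,b]$. Since $w\geq0$, $\Psi'>0$ and the remaining kernel factors are nonnegative, $B$ is a positive (order-preserving) operator: $\phi\leq\psi$ implies $B\phi\leq B\psi$. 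Substituting the hypothesis into itself repeatedly and using positivity of $B$, one obtains by induction the estimate $u(t)\leq\sum_{k=0}^{n-1}B^kv(t)+B^nu(t)$ for every $n\in\N$. It then remains to (i) bound the iterated operator $B^kv$ explicitly, (ii) show the remainder $B^nu$ vanishes as $n\to\infty$, and (iii) sum the resulting series.

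The heart of the argument is the inductive claim
\[
B^kv(t)\leq \frac{[w(t)\Gamma(\alpha)]^k}{\Gamma(\alpha k)}\int_0^t \Psi'(s)\big(\Psi(t)-\Psi(s)\big)^{\alpha k-1}e^{-\lambda(\Psi(t)-\Psi(s))}v(s)\,\mathrm{d}s.
\]
Two features make the inductive step close. First, the tempering exponentials telescope: $e^{-\lambda(\Psi(t)-\Psi(\tau))}e^{-\lambda(\Psi(\tau)-\Psi(s))}=e^{-\lambda(\Psi(t)-\Psi(s))}$, so the exponential factor simply survives unchanged through each composition. Second, the monotonicity of $w$ lets me replace the interior value $w(\tau)$ (for $\tau\leq t$) by $w(t)$, collecting the power $[w(t)]^k$. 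After interchanging the order of integration (justified by nonnegativity via Tonelli) and substituting $\sigma=\Psi(\tau)$, the inner integral becomes a Beta integral $\int_{\Psi(s)}^{\Psi(t)}(\Psi(t)-\sigma)^{\alpha-1}(\sigma-\Psi(s))^{(k-1)\alpha-1}\,\mathrm{d}\sigma$, which evaluates to $\frac{\Gamma(\alpha)\Gamma((k-1)\alpha)}{\Gamma(k\alpha)}(\Psi(t)-\Psi(s))^{k\alpha-1}$, advancing the induction. I expect this Beta-integral bookkeeping, together with tracking the correct Gamma factors, to be the main technical step, though it is routine once the $\Psi$-substitution reduces it to the ordinary Beta function.

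For the remainder, the same computation gives $B^nu(t)\leq \frac{[w(b)\Gamma(\alpha)]^n}{\Gamma(\alpha n)}(\Psi(b)-\Psi(0))^{\alpha n-1}C_\lambda\int_0^t\Psi'(s)u(s)\,\mathrm{d}s$, where $C_\lambda$ bounds $e^{-\lambda(\Psi(t)-\Psi(s))}$ on $[0,b]$; since $u\in L^1([0,b],\mathrm{d}\Psi)$ the integral is finite, and the coefficient $\tfrac{r^n}{\Gamma(\alpha n)}\to0$ for any fixed $r>0$ (the standard growth estimate underlying convergence of the Mittag--Leffler series), so $B^nu(t)\to0$. This limiting step is where the integrability hypothesis on $u$ is genuinely needed, and I regard it as the most delicate point of the argument. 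Letting $n\to\infty$ in the iteration leaves $u(t)\leq v(t)+\sum_{k=1}^\infty B^kv(t)$, and applying the inductive bound term by term and interchanging summation and integration (again by Tonelli) yields exactly the stated series inside a single integral. I note that the whole argument can alternatively be phrased via the conjugation relation: setting $U=e^{\lambda\Psi}u$ and $V=e^{\lambda\Psi}v$ removes the tempering and reduces the statement to the untempered $\Psi$-fractional Gr\"onwall inequality, after which one translates back using \eqref{Tprod:I}, consistent with the conjugation viewpoint emphasized throughout the paper.
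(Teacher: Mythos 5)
Your proof is correct, but it takes a genuinely different route from the paper's. The paper disposes of this theorem in a few lines by pure transference: it specializes \cite[Theorem 11]{FahadMujeeb} (setting $\rho=1$ there) to obtain the Gr\"onwall inequality for ordinary tempered fractional integrals on the interval $[\Psi(0),\Psi(b)]$, and then conjugates by the substitution operator $Q_{\Psi}$ using Proposition \ref{Conjugationrelations}, with $p=Q_{\Psi}^{-1}u$, $q=Q_{\Psi}^{-1}v$, $g=Q_{\Psi}^{-1}w$, to land on the stated $\Psi$-tempered inequality. You instead prove the result from scratch by the classical iteration scheme: positivity of the integral operator $B$ gives $u\leq\sum_{k=0}^{n-1}B^kv+B^nu$; the inductive kernel estimate works because the tempering exponentials telescope, the monotonicity of $w$ collects the factor $[w(t)]^k$, and the $\sigma=\Psi(\tau)$ substitution reduces the composed kernel to a Beta integral; and the remainder dies by the $r^n/\Gamma(\alpha n)\to0$ estimate, which is exactly where the $L^1([0,b],\mathrm{d}\Psi)$ hypothesis on $u$ enters. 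All of these steps are sound (one cosmetic point: the bound $(\Psi(t)-\Psi(s))^{\alpha n-1}\leq(\Psi(b)-\Psi(0))^{\alpha n-1}$ in your remainder estimate is only valid once $\alpha n\geq1$, but since you only need $n\to\infty$ this costs nothing). What each approach buys: the paper's proof is shorter and reinforces its running theme that conjugation relations let one import results wholesale, but it rests entirely on the precise form of the cited external theorem; your proof is self-contained, makes visible exactly which hypothesis powers which step, and would survive even if the cited result were stated slightly differently. Your closing remark about an alternative conjugation is also apt, though note it is not quite the paper's route: you propose conjugating by the multiplier $e^{\lambda\Psi}$ to reduce to the untempered $\Psi$-fractional Gr\"onwall inequality of \cite[Theorem 3]{Sousa}, whereas the paper conjugates by $Q_{\Psi}$ to reduce to the tempered (non-$\Psi$) inequality of \cite{FahadMujeeb} --- the two transfers remove the two different generalisations in the opposite order.
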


\begin{proof}
%	For $ \rho=1 $, we obtain the result proved in \cite[Theorem 11]{FahadMujeeb} in the settings of tempered fractional calculus. Then, using this result together with the conjugation relations given by Proposition \ref{Conjugationrelations}, we obtain the required result.
If we consider $ \rho=1 $ in \cite[Theorem 11]{FahadMujeeb}, we recover the following Gr\"onwall's inequality for non-negative $p,q\in L^1[a,B]$ and non-negative non-decreasing continuous $g:[a,B]\to\mathbb{R}^+_0$: if
\begin{equation}
p(t) \leq q(t) + \Gamma(\alpha)\cdot g(t)\cdot\prescript{T}{a}{\mathscr{I}}_{t}^{\alpha,\lambda}p(t),\quad t\in[a,B],\label{gronfortemperedif}
\end{equation}
then
\begin{equation}
p(t) \leq q(t) + \sum_{k=1}^{\infty}\big[\Gamma(\alpha)\cdot g(t)\big]^{k}\cdot\prescript{T}{a}{\mathscr{I}}_{t}^{\alpha k,\lambda}q(t), \quad t \in [a,B].\label{gronfortemperedthen}
\end{equation}
Defining $ p= Q_{\Psi}^{-1} u$, $ q=Q_{\Psi}^{-1} v $, $ g = Q_{\Psi}^{-1} w $, and also $a=\Psi(0)$ and $B=\Psi(b)$, in equations \eqref{gronfortemperedif}--\eqref{gronfortemperedthen}, and using the fact that
\[
\prescript{T}{a}{\mathscr{I}}_{t}^{\alpha,\lambda}\,p = \left( \prescript{T}{a}{\mathscr{I}}_{t}^{\alpha,\lambda}\, \circ Q_{\Psi}^{-1}\right)u = \left( Q_{\Psi}^{-1} \circ \prescript{T}{0}{\mathscr{I}}_{\Psi(t)}^{\alpha,\lambda} \right) u = \left(\prescript{T}{0}{\mathscr{I}}_{\Psi(t)}^{\alpha,\lambda}  u \right) \circ \Psi ^{-1},
\]
we get the required result by using the conjugation relations given in Proposition \ref{Conjugationrelations}.
\end{proof}

\begin{rem}
In the special case $\lambda=0$, we obtain the result of \cite[Theorem 3]{Sousa}. Further, if we assume $v$ is a non-decreasing function, we obtain the following result (which is \cite[Corollary 2]{Sousa} in the case $\lambda=0$):
\begin{equation} \label{Gronwall:coroll}
	u(t) \leq v(t) \,E_\alpha\left(  w(t)\Gamma(\alpha)\left(\Psi(t)-\Psi(0)\right)^{\alpha}\right), \quad  t \in [0,b].
\end{equation}
\end{rem}

We can use the above result to establish some Ulam type stabilities for the equation \eqref{865}. Firstly, we write down some required definitions, following Wang et al. \cite{Wang}, for the various stabilities to be investigated.

\begin{definition}\label{d1}
Let $\alpha\in(0,1)$, $\lambda\in\mathbb{R}$, and $f\in C\left( [0, b]\times \R, \R\right) $.

The equation \eqref{865} is said to be Ulam--Hyers stable if there exists a constant $\phi_f > 0 $, such that for each $\varepsilon > 0 $ and for each $z \in AC_\Psi[0, b]$ satisfying the inequality
\begin{equation}\label{ed1}
\left| \prescript{TC}{0}{\mathscr{D}}_{\Psi(t)}^{\alpha, \lambda}z(t)-f\left(t, z(t)\right)\right| \leq \varepsilon, \qquad t\in (0, b],
\end{equation} 
there exists a solution $y \in AC_\Psi[0, b]$ of the equation \eqref{865} with
\[
\left\| z- y\right\|_\infty\leq \phi_f \,\varepsilon.
\]

The equation \eqref{865} is said to be generalised Ulam--Hyers stable if there exists a function $\xi_f \in C(\R^+_0,\R^+_0)$ with $\xi_f (0) = 0$, such that for each $\varepsilon > 0 $ and for each $z \in AC_\Psi[0, b]$ satisfying the inequality \eqref{ed1}, there exists a solution $y \in AC_\Psi[0, b]$ of the equation \eqref{865} with
\[
\left\| z- y\right\|_\infty \leq \xi_f (\varepsilon).
\]
\end{definition}

\begin{definition}\label{d3}
Let $\alpha\in(0,1)$, $\lambda\in\mathbb{R}$, and $f\in C\left( [0, b]\times \R, \R\right) $.

The equation \eqref{865} is said to be Ulam--Hyers--Rassias stable, with respect to a given function $\Omega\in C([0, b],\,\R_+)$,  if there exists a constant $\phi_{f,\Omega} > 0 $ such that for each $\varepsilon > 0 $ and for each $z \in AC_\Psi[0, b]$ satisfying the inequality
\begin{equation}\label{ed3}
\left| \prescript{TC}{0}{\mathscr{D}}_{\Psi(t)}^{\alpha, \lambda}z(t)-f(t, z(t))\right| \leq \varepsilon\,\Omega(t), \qquad t\in (0, b],
\end{equation}
there exists a solution $y \in AC_\Psi[0, b]$ of the equation \eqref{865} with
\[
\left| z(t)-y(t)\right|  \leq  \,\varepsilon\,\phi_{f,\Omega} \,\Omega(t),\qquad t\in [0, b].
\]

The equation \eqref{865} is said to be generalised Ulam--Hyers--Rassias stable, with respect to a given function $\Omega\in C([0, b],\,\R_+)$, if there exists a function $C_{f, \Omega} > 0 $ such that for each $\varepsilon>0$ and for each $z \in AC_\Psi[0, b]$ satisfying the inequality
\begin{equation}\label{ed4}
\left| \prescript{TC}{0}{\mathscr{D}}_{\Psi(t)}^{\alpha, \lambda}z(t)-f(t, z(t))\right| \leq \,\Omega(t), \qquad t\in (0, b],
\end{equation} 
there exists a solution $y \in AC_\Psi[0, b]$ of the equation \eqref{865} with
\[
\left| z(t)-y(t)\right| \leq  \,\phi_{f,\Omega} \,\Omega(t),\qquad t\in [0, b].
\]
\end{definition}

%\begin{rem}
%A function $ z \in AC_\Psi[0, b]$  is solution of  the  inequality \eqref{ed1} if and only if there exists a function $w \in AC_\Psi[0, b]$  (which depend on $z$) such that
%\begin{enumerate}
%\item $ \left|w(t) \right| \leq \varepsilon$.
%\item  $\prescript{TC}{0}{\mathscr{D}}_{\Psi(t)}^{\alpha, \lambda}z(t)=f(t, z(t))+w(t), ~ t \in (0, b]$.
%\end{enumerate}
%\end{rem}

\begin{theorem} \label{th6.1}
If $\alpha,\lambda,f$ are as in Theorem \ref{thm 54}, and if $\Omega\in C([0, b],\,\R_+)$ is a non-decreasing function and $K > 0$ is a constant such that
\begin{equation}\label{am}
\prescript{T}{0}{\mathscr{I}}_{\Psi(t)}^{\alpha, \lambda}\Omega(t) \leq K\,\Omega(t),\qquad t \in (0, b],
\end{equation}
then the fractional differential equation \eqref{865} is Ulam--Hyers--Rassias stable with respect to $\Omega$.
\end{theorem}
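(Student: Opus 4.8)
The plan is to follow the standard route for Ulam--Hyers--Rassias stability: convert the differential inequality into an integral inequality, and then close the argument with the Gr\"onwall inequality of Theorem \ref{uhr} and its corollary.

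First I would take any $\varepsilon>0$ and any $z\in AC_\Psi[0,b]$ satisfying \eqref{ed3}, and set $h(t):=\prescript{TC}{0}{\mathscr{D}}_{\Psi(t)}^{\alpha,\lambda}z(t)-f(t,z(t))$, so that $|h(t)|\le\varepsilon\,\Omega(t)$ on $(0,b]$. Let $y\in AC_\Psi[0,b]$ be the unique solution of the initial value problem \eqref{865}--\eqref{866} with the \emph{matched} initial value $y_0=z(0)$, which exists by Theorem \ref{thm 54}. Applying the equivalence of Theorem \ref{c2} to $y$, and likewise to $z$ (whose $\Psi$-tempered Caputo derivative is the function $f(t,z(t))+h(t)$), I obtain the two integral representations
\begin{align*}
y(t) &= e^{-\lambda(\Psi(t)-\Psi(0))}z(0)+\prescript{T}{0}{\mathscr{I}}_{\Psi(t)}^{\alpha,\lambda}f(t,y(t)), \\
z(t) &= e^{-\lambda(\Psi(t)-\Psi(0))}z(0)+\prescript{T}{0}{\mathscr{I}}_{\Psi(t)}^{\alpha,\lambda}\big[f(t,z(t))+h(t)\big].
\end{align*}

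Subtracting these, the non-integral terms cancel exactly because the initial values were matched, and since the kernel of $\prescript{T}{0}{\mathscr{I}}_{\Psi(t)}^{\alpha,\lambda}$ is non-negative (here the hypothesis $\lambda\ge0$ is used), I can pass the modulus through the integral and apply the Lipschitz condition \eqref{901} together with $|h|\le\varepsilon\Omega$ to get
\[
|z(t)-y(t)|\le L\,\prescript{T}{0}{\mathscr{I}}_{\Psi(t)}^{\alpha,\lambda}|z(t)-y(t)|+\varepsilon\,\prescript{T}{0}{\mathscr{I}}_{\Psi(t)}^{\alpha,\lambda}\Omega(t).
\]
Invoking the hypothesis \eqref{am} on the last term turns this into an integral inequality of exactly the form required by Theorem \ref{uhr}, with $u=|z-y|$, with $v=\varepsilon K\Omega$ (which is non-decreasing, since $\Omega$ is), and with the constant weight $w=L/\Gamma(\alpha)$.

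Finally I would apply the Gr\"onwall corollary \eqref{Gronwall:coroll}, which is legitimate precisely because $v$ is non-decreasing, to conclude
\[
|z(t)-y(t)|\le\varepsilon K\,\Omega(t)\,E_\alpha\big(L(\Psi(t)-\Psi(0))^\alpha\big)\le\varepsilon K\,E_\alpha\big(L(\Psi(b)-\Psi(0))^\alpha\big)\,\Omega(t),
\]
where the last inequality uses the monotonicity of $E_\alpha$ on $\R_+$ and of $\Psi$ on $[0,b]$. Setting $\phi_{f,\Omega}:=K\,E_\alpha\big(L(\Psi(b)-\Psi(0))^\alpha\big)>0$ yields the defining inequality of Ulam--Hyers--Rassias stability. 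The step requiring the most care is the conversion to the integral inequality: one must verify that kernel positivity (hence $\lambda\ge0$) justifies moving the absolute value inside $\prescript{T}{0}{\mathscr{I}}_{\Psi(t)}^{\alpha,\lambda}$, and that matching the initial condition makes the exponential terms cancel so that $|z-y|$ genuinely satisfies the self-referential bound; everything after that is a direct invocation of the results already established.
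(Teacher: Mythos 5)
Your proposal is correct and follows essentially the same route as the paper's own proof: define the perturbation term, take the solution $y$ with matched initial value $y(0)=z(0)$ from Theorem \ref{thm 54}, write both integral representations via Theorem \ref{c2}, combine the hypothesis \eqref{am} with the Lipschitz condition \eqref{901} to obtain the integral inequality, and close with the Gr\"onwall corollary \eqref{Gronwall:coroll} using $u=|z-y|$, $v=\varepsilon K\,\Omega$, $w=L/\Gamma(\alpha)$, arriving at the same constant $\phi_{f,\Omega}=K\,E_{\alpha}\bigl(L(\Psi(b)-\Psi(0))^{\alpha}\bigr)$. The only cosmetic difference is that the paper first bounds $e^{-\lambda(\Psi(t)-\Psi(s))}\leq 1$ (which, rather than kernel positivity, is where the hypothesis $\lambda\geq 0$ actually enters) before invoking Gr\"onwall, whereas you keep the tempered kernel and invoke the tempered version directly; both are valid.
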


\begin{proof}
Fix $\varepsilon>0$, and let $z \in AC_\Psi[0, b]$ be any solution of  the inequality 
\begin{equation}\label{5.1a}
\left| \prescript{TC}{0}{\mathscr{D}}_{\Psi(t)}^{\alpha, \lambda}z(t)-f(t, z(t))\right| \leq \varepsilon\,\Omega(t),\qquad t \in (0, b].
\end{equation}
Then, we can define $ w\in AC_\Psi[0, b]$ such that
\begin{equation}\label{5.7}
\prescript{TC}{0}{\mathscr{D}}_{\Psi(t)}^{\alpha, \lambda}z(t)=f\left(t, z(t)\right)+w(t),\qquad t\in[0,b],
\end{equation}
and $\left| w(t)\right|\leq \varepsilon\,\Omega(t)$, for all $t\in (0, b]$. Now, let $y \in AC_\Psi[0, b]$ be a solution of the following initial value problem:
\begin{equation}\label{5.2a1}
\begin{cases}
 \prescript{TC}{0}{\mathscr{D}}_{\Psi(t)}^{\alpha, \lambda}y(t)=f(t, y(t)), \qquad t \in (0, b],\\
y(0)=z(0),
\end{cases}
\end{equation}
which we know exists (uniquely) by Theorem \ref{thm 54}. Also, by Theorem \ref{c2}, the equivalent fractional integral equation to \eqref{5.2a1} is
\begin{equation}\label{5.3}
y(t)= e^{-\lambda(\Psi(t)-\Psi(0))} z(0) + \prescript{T}{0}{\mathscr{I}}_{\Psi(t)}^{\alpha, \lambda} f(t, y(t)),\qquad t\in [0, b],
\end{equation}
while the solution of the equation \eqref{5.7} is given by
\begin{equation}\label{59}
z(t)= e^{-\lambda(\Psi(t)-\Psi(0))} z(0) + \prescript{T}{0}{\mathscr{I}}_{\Psi(t)}^{\alpha, \lambda} f(t, z(t))+\prescript{T}{0}{\mathscr{I}}_{\Psi(t)}^{\alpha, \lambda}w(t),\qquad t\in [0, b].
\end{equation}
From equation \eqref{59} and inequality \eqref{am}, we have
\begin{align*}%\label{60}
\left|z(t)- e^{-\lambda(\Psi(t)-\Psi(0))} z(0) - \prescript{T}{0}{\mathscr{I}}_{\Psi(t)}^{\alpha, \lambda} f(t, z(t))\right| 
&\leq \prescript{T}{0}{\mathscr{I}}_{\Psi(t)}^{\alpha, \lambda}\left| w(t)\right| \\
&\leq \varepsilon\,\prescript{T}{0}{\mathscr{I}}_{\Psi(t)}^{\alpha, \lambda}\Omega(t) \\
&\leq \varepsilon \,K\,\Omega(t).
\end{align*}
Using this together \eqref{5.3} and the Lipschitz condition \eqref{901}, for each $t\in [0, b]$ we have
\begin{align*}
\big|z(t)&-y(t)\big|\\
&=\left|z(t)-\left[ e^{-\lambda(\Psi(t)-\Psi(0))} z(0) + \prescript{T}{0}{\mathscr{I}}_{\Psi(t)}^{\alpha, \lambda} f(t, y(t))\right] \right|\\
&\leq\left|z(t)- e^{-\lambda(\Psi(t)-\Psi(0))} z(0) -\prescript{T}{0}{\mathscr{I}}_{\Psi(t)}^{\alpha, \lambda} f(t, z(t))\right| +\left| \prescript{T}{0}{\mathscr{I}}_{\Psi(t)}^{\alpha, \lambda} f(t, z(t))-\prescript{T}{0}{\mathscr{I}}_{\Psi(t)}^{\alpha, \lambda} f(t, y(t)) \right|\\
&\leq \varepsilon \,K\,\Omega(t)+\frac{1}{\Gamma(\alpha)}\int_{a}^{t}\Psi'(s)(\Psi(t)-\Psi(s))^{\alpha-1}e^{-\lambda(\Psi(t)-\Psi(s))}\big|f\left( s, z(s)\right)-f\left( s, y(s)\right) \big|\,\mathrm{d}s\\
&\leq \varepsilon \,K\,\Omega(t) + L\,\frac{1}{\Gamma(\alpha)}\int_{a}^{t}\Psi'(s)(\Psi(t)-\Psi(s))^{\alpha-1}\left| z(s)- y(s) \right| \,\mathrm{d}s.
\end{align*}
Applying the corollary \eqref{Gronwall:coroll} of Gr\"onwall's inequality, with $u(t)=\left|z(t)-y(t) \right|$ and $v(t)=\varepsilon \,K\,\Omega(t)$ and $w(t)=\frac{L}{\Gamma(\alpha)}$, we obtain
\begin{align}\label{62}
\left| z(t)-y(t)\right| 
&\leq\varepsilon\,K\,\Omega(t)\,E_{\alpha}\Big(L\left(\Psi(t)-\Psi(0)\right)^{\alpha}\Big)\nonumber\\
&\leq \varepsilon\,\phi_{f,\Omega} \,\Omega(t),\quad t\in [0, b],
\end{align}
where $\phi_{f,\Omega}:=K\,E_{\alpha}\left(L\left(\Psi(b)-\Psi(0)\right)^{\alpha}\right)$. Thus, the equation \eqref{865} is Ulam--Hyers--Rassias stable with respect to $\Omega$.
\end{proof}

\begin{cor}
Assuming the same conditions as in Theorem \ref{th6.1}, we have the following immediate consequences of the above result on Ulam--Hyers--Rassias stability.
\begin{enumerate}
\item Putting $\varepsilon=1$, we get that the equation \eqref{865} is generalised Ulam--Hyers--Rassias stable.
\item Putting $\Omega(t)=1$, we get that the equation \eqref{865} is  Ulam--Hyers stable.
\item Putting $\Omega(t)=1$ and $\xi_f (\varepsilon)=\varepsilon\, \phi_{f,1}$, we get that the equation \eqref{865} is generalised Ulam--Hyers stable.
\end{enumerate}
\end{cor}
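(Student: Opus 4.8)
The plan is to obtain all three parts as direct specialisations of Theorem \ref{th6.1}, in each case merely checking that the hypotheses of that theorem are satisfied and then reading off its conclusion against the relevant definition. Since Theorem \ref{th6.1} already furnishes the master bound $\left|z(t)-y(t)\right|\leq\varepsilon\,\phi_{f,\Omega}\,\Omega(t)$ for every $z$ satisfying \eqref{ed3}, together with the existence and uniqueness of the comparison solution $y$ from Theorem \ref{thm 54}, no new analysis is required: the task reduces to matching the quantities in Definitions \ref{d1} and \ref{d3}.

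For part (1), I would set $\varepsilon=1$. The defining inequality \eqref{ed4} for generalised Ulam--Hyers--Rassias stability is exactly \eqref{ed3} with $\varepsilon=1$, so any $z$ satisfying \eqref{ed4} is admissible in Theorem \ref{th6.1}, whose conclusion then reads $\left|z(t)-y(t)\right|\leq\phi_{f,\Omega}\,\Omega(t)$. This is precisely the bound required for generalised Ulam--Hyers--Rassias stability, with the role of the constant played by $\phi_{f,\Omega}$.

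For parts (2) and (3), I would set $\Omega(t)=1$. The one point needing verification is that the constant function $\Omega\equiv1$ meets the standing hypothesis \eqref{am}: it is trivially non-decreasing and continuous, and by Proposition \ref{Coroll:1} the function $\prescript{T}{0}{\mathscr{I}}_{\Psi(t)}^{\alpha, \lambda}(1)$ is continuous on the compact interval $[0,b]$, hence bounded, so one may take $K=\sup_{t\in[0,b]}\prescript{T}{0}{\mathscr{I}}_{\Psi(t)}^{\alpha, \lambda}(1)<\infty$, giving $\prescript{T}{0}{\mathscr{I}}_{\Psi(t)}^{\alpha, \lambda}(1)\leq K=K\,\Omega(t)$. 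With $\Omega\equiv1$ the inequality \eqref{ed3} collapses to \eqref{ed1}, and the conclusion of Theorem \ref{th6.1} becomes $\left\|z-y\right\|_\infty\leq\varepsilon\,\phi_{f,1}$, which is Ulam--Hyers stability with constant $\phi_f:=\phi_{f,1}$. For part (3), choosing in addition $\xi_f(\varepsilon):=\varepsilon\,\phi_{f,1}$ supplies a function in $C(\R^+_0,\R^+_0)$ with $\xi_f(0)=0$ satisfying $\left\|z-y\right\|_\infty\leq\xi_f(\varepsilon)$, which is exactly generalised Ulam--Hyers stability.

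The only mildly substantive step is this boundedness check for $\Omega\equiv1$; everything else is a mechanical translation between the stability definitions. I therefore expect no genuine obstacle, the hard analytic content -- the contraction argument and the Gr\"onwall estimate -- having already been discharged in Theorems \ref{thm 54} and \ref{th6.1}.
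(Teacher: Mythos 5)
Your proposal is correct and is essentially the argument the paper intends: the paper states this corollary without proof, as an immediate consequence of Theorem \ref{th6.1}, and your case-by-case specialisation (matching \eqref{ed1}, \eqref{ed3}, \eqref{ed4} against Definitions \ref{d1} and \ref{d3}) is exactly that reading. Your extra verification that $\Omega\equiv1$ satisfies hypothesis \eqref{am} --- via boundedness of the continuous function $\prescript{T}{0}{\mathscr{I}}_{\Psi(t)}^{\alpha,\lambda}(1)$ on $[0,b]$ --- is a worthwhile detail that the paper leaves implicit.
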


\section{Conclusions and future directions}

In this paper, we have continued the development of tempered fractional calculus with respect to functions, which was first introduced in a previous paper of the last two authors with their collaborators \cite{Fahad}  and which forms the unique intersection of two general classes of operators (fractional calculus with general analytic kernels with respect to functions, and weighted fractional calculus with respect to functions) as well as a unique extension of two useful types of operators (those of tempered fractional calculus and those of fractional calculus with respect to functions) both of which have found applications in the study of continuous time random walks.

The results of the current paper might form the theoretical foundations for further developing and studying the fractional calculus of tempered operators of a function with respect to another function, as well as for solving and analysing fractional differential equations involving these operators. Some nonlinear ordinary differential equations have already been studied here, with existence-uniqueness theorems and stability theorems being established, but further extensions of these studies are possible, such as proving qualitative properties and bounds, as well as constructing explicit solutions in some special cases by other methods. Partial differential equations involving $\Psi$-tempered operators could also be studied.

Another natural extension would be to consider the calculus of tempered Hilfer fractional derivatives with respect to functions. These operators would be related to the classical Hilfer derivatives \cite{Hilfer} by conjugation relations exactly analogous to those in Proposition \ref{Conjugationrelations}, so many results concerning them could be proved very easily. Hilfer-type $\Psi$-tempered fractional differential equations could also be studied, and some results pertaining to existence, uniqueness, and stability for such equations will be published in a forthcoming paper along with the basic theory of the aforesaid operators.

%%%%%%%%%%%%%%%%%%%%%%%%%%%%%%%%%%%%%%%%%%%%%%%%%%%%% % % % % % % % % % % % % % % % % % % % % % % % % % % % % % % % % % 
\section*{Acknowledgment}
The second author acknowledges the Science and Engineering Research Board (SERB), New Delhi, India for the Research Grant (Ref: File no. EEQ/2018/000407). The fourth author would like to thank the Isaac Newton Institute for Mathematical Sciences, Cambridge, for support and hospitality during the programme Fractional Differential Equations where work on this paper was undertaken. This work was supported by EPSRC grant no. EP/R014604/1.

\section*{Declaration of interests}
The authors declare that they have no known competing financial interests or personal relationships that could have appeared to influence the work reported in this paper.

%\allowdisplaybreaks
%\section*{Credit author statement}
%Both authors contributed equally to this work.
%%%%%%%%%%%%%%%%%%%%%%%%%%%%%%%%%%%%%%%%%%%%%%%%%


\begin{thebibliography}{99}

\bibitem{Samko}
S. G. Samko, A. A. Kilbas, O. I. Marichev, Fractional integrals and derivatives, Gordon and Breach, Yverdon, 1993. Translated from Russian, 1987.

\bibitem{Kilbas}
A. A. Kilbas, H. M. Srivastava, J. J. Trujillo, Theory and applications of fractional differential equations, North-Holland Mathematics Studies vol. 204, Elsevier, Amsterdam, 2006.
 
 \bibitem{Diethelm}  K. Diethelm, The analysis of fractional differential equations, Lecture Notes in Mathematics, Springer--Verlag Berlin Heidelberg, 2010.

\bibitem{Podlubny}
I. Podlubny, Fractional differential equations, Academic Press, New York, 1999. 

\bibitem{Hilfer}
R. Hilfer, Applications of fractional calculus in physics, World Scientific, Singapore, 2000.
 
\bibitem{sun-etal}
H. G. Sun, Y. Zhang, D. Baleanu, W. Chen, Y. Q. Chen, A new collection of real world applications of fractional calculus in science and engineering, Communications in Nonlinear Science and Numerical Simulation, 64 (2018), 213--231.

\bibitem{baleanu-fernandez}
D. Baleanu, A. Fernandez, On fractional operators and their classifications, Mathematics, 7(9) (2019), 830.

\bibitem{fernandez-ozarslan-baleanu}
A. Fernandez, M. A. \"Ozarslan, D. Baleanu, On fractional calculus with general analytic kernels, Applied Mathematics and Computation, 354 (2019), 248--265.

\bibitem{Osler}
T. J. Osler, Leibniz rule for fractional derivatives generalized and an application to infinite series, SIAM Journal of Applied Mathematics, 18(3) (1970), 658--674.

\bibitem{Almeida} 
R. Almeida, A Caputo fractional derivative of a function with respect to another function, Communications in Nonlinear Science and Numerical Simulation, 44 (2017), 460--481.

\bibitem{agrawal}
O. P. Agrawal, Some generalized fractional calculus operators and their applications in integral equations, Fractional Calculus and Applied Analysis, 15(4) (2012), 700--711.

\bibitem{ff:weighted}
A. Fernandez, H. M. Fahad, Weighted fractional calculus: a general class of operators, under review.

% \bibitem{katu}U. N. Katugampola, A new fractional derivative with classical properties, arXiv:1410.6535v1, 2014.
       
%  \bibitem{CaputoKatugampola} R. Almeida, A. B. Malinowska, Fractional differential equations with dependence on the Caputo–-Katugampola derivative, Journal of Computational and Nonlinear Dynamics, 11(6)(2016), 061017.
   
% \bibitem{HilferHadamard}M. D. Qassim, K. M. Furati,  N. -E. Tatar, On a differential equation involving Hilfer--Hadamard fractional derivative, Abstract and Applied Analysis, 2012(2012), 391062. 

% \bibitem{Yang} X. J. Yang, General fractional derivatives: theory, methods and applications, CRC Press, Boca Raton, 2019.

\bibitem{Meerschaert1}
M. M. Meerschaert, F. Sabzikar, M. S. Phanikumar, A. Zeleke, Tempered fractional time series model for turbulence in geophysical flows, Journal of Statistical Mechanics: Theory and Experiment, 2014 (9) (2014), P09023.

\bibitem{Meerschaert2}
M. M. Meerschaert, F. Sabzikar, Tempered fractional Brownian motion, Statistics \& Probability Letters,  83 (10) (2013), 2269--2275.

%\bibitem{Barndorff} O. E. Barndorff-Nielsen, Processes of normal inverse Gaussian type, Finance and Stochastics,  2 (1) (1997), 41--68.

 \bibitem{Baeumera}
 B. Baeumer, M. M. Meerschaert, Tempered stable L\'{e}vy motion and transient super diffusion, Journal of Computational and Applied Mathematics, 23 3(10) (2010), 2438--2448.

\bibitem{jarad-abdeljawad-alzabut}
F. Jarad, T. Abdeljawad, J. Alzabut, Generalized fractional derivatives generated by a class of local proportional derivatives, The European Physical Journal Special Topics, 226 (2018), 3457--3471.

\bibitem{friedrich-jenko}
R. Friedrich, F. Jenko, A. Baule, S. Eule, Anomalous Diffusion of Inertial, Weakly Damped Particles, Physics Review Letters, 96 (2006), 230601.

\bibitem{Cao}
J. Cao,  C. Li, Y. Chen,  On tempered and substantial fractional calculus,  IEEE/ASME $10^{th}$ International Conference on Mechatronic and Embedded Systems and Applications (MESA)(2014), 1--6.

\bibitem{Li}
C. Li, W. Deng, L. Zhao, Well-posedness and numerical algorithm for the tempered fractional ordinary differential equations, Discrete and Continuous Dynamical Systems Series B, 24(4) (2019), 1989--2015. 

 \bibitem{L Zhao}
 L. Zhao, W. Deng, J. S. Hesthaven,   Spectral methods for tempered fractional differential equations, Mathematics of Computation (2016).
 
 \bibitem{Morgado}
 M. L. Morgado, M. Rebelo, Well-posedness and numerical approximation of tempered fractional terminal value problems, Fractional Calculus and Applied Analysis, 20(5) (2017), 1239--1262.
 
 \bibitem{Fernandez}
 A. Fernandez,  C. Ustao\u{g}lu, On some analytic properties of tempered fractional calculus, Journal of Computational and Applied Mathematics, 366 (2020), 112400.
  
  \bibitem{Zaky}
  M. A. Zaky, Existence, uniqueness and numerical analysis of solutions of tempered fractional boundary value problems, Applied Numerical Mathematics, 145 (2019), 429--457.

\bibitem{Erdelyi}
A. Erdelyi, An integral equation involving Legendre functions, Journal of the Society for Industrial and Applied Mathematics, 12(1) (1964), 15--30.

\bibitem{Vanterler1}
 J. Vanterler da C. Sousa, E. Capelas de Oliveira, On the $\Psi$-Hilfer fractional derivative, Communications in Nonlinear Science and Numerical Simulation, 60 (2018), 72--91.
 
 \bibitem{kucche-mali-sousa}
K. D. Kucche, A. D. Mali, J. Vanterler da C. Sousa, On the nonlinear $\Psi$-Hilfer fractional differential equations, Computational and Applied Mathematics 38 (2019), 73.

\bibitem{mali-kucche-sousa}
A. D. Mali, K. D. Kucche, J. Vanterler da C. Sousa, On coupled system of nonlinear $\Psi$-Hilfer hybrid fractional differential equations, International Journal of Nonlinear Sciences and Numerical Simulation (2021).

\bibitem{Sokolov}
    I. M. Sokolov, M. G. W. Schmidt, F. Sagu\'{e}s, Reaction-subdiffusion equations, Physical Review E, 73 (3) (2006), 031102.

\bibitem{Schmidt}
   M. G. W. Schmidt, F. Sagu\'{e}s, I. M. Sokolov, Mesoscopic description of reactions for anomalous diffusion: A case study, Journal of Physics: Condensed Matter, 19 (6) (2007), 065118. 

 \bibitem{Cartea}
 \'{A}. Cartea, D. del-Castillo-Negrete, Fluid limit of the continuous-time random walk with general L\'{e}vy jump distribution functions, Physical Review E, 76 (4) (2007), 041105.
 
  \bibitem{Henry}
B. I. Henry, T. A. M. Langlands, S. L. Wearne, Anomalous diffusion with linear reaction dynamics: from continuous time random walks to fractional reaction-diffusion equations, Physical Review  E, 74 (3) (2006), 031116.

\bibitem{wu1}
H. Fu, G.-C. Wu, G. Yang, L.-L. Huang, Continuous time random walk to a general fractional Fokker--Planck equation on fractal media, The European Physical Journal Special Topics, 2021.

\bibitem{wu2}
Q. Fan, G.-C. Wu, H. Fu, A Note on Function Space and Boundedness of the General Fractional Integral in Continuous Time Random Walk, Journal of Nonlinear Mathematical Physics, 2021.

\bibitem{Fahad}
H. M. Fahad, A. Fernandez, M. ur Rehman, M. Siddiqi, Tempered and Hadamard-type fractional calculus with respect to functions, Mediterranean Journal of Mathematics, 18(4) (2021), 143. 

\bibitem{Oumarou}
C. M. S. Oumarou, H. M. Fahad, J.-D. Djida, A. Fernandez, On fractional calculus with analytic kernels with respect to functions, Computational and Applied Mathematics, 40 (2021), 244.

\bibitem{Sousa} 
J. V. D. C. Sousa, E. C. D. Oliveira, A Gronwall inequality and the Cauchy-type problem by means of $\psi$-Hilfer operator, Differential Equations \& Applications, 11(1) (2019), 87--106.

\bibitem{jarad-abdeljawad-shah}
F. Jarad, T. Abdeljawad, K. Shah, On the weighted fractional operators of a function with respect to another function, Fractals, 28(08) (2020), 2040011.

\bibitem{ff:conjug}
A. Fernandez, H. M. Fahad, On the importance of conjugation relations in fractional calculus, under review.

\bibitem{whittaker-watson}
E. T. Whittaker, G. N. Watson, A course of modern analysis, 4th ed., Cambridge University Press, Cambridge, 1927.

\bibitem{Webb}
H. A. Webb, J. R. Airey, The practical importance of the confluent hypergeometric function, The London, Edinburgh, and Dublin Philosophical Magazine and Journal of Science, 36 (211) (1918), 129--141.

\bibitem{Gorenflo}
R. Gorenflo, A. A. Kilbas, F. Mainardi, S. V. Rogosin,   Mittag-Leffler functions, related topics and applications,  Springer, Berlin, 2014.

\bibitem{trujillo-rivero-bonilla}
J. J. Trujillo, M. Rivero, B. Bonilla, On a Riemann--Liouville generalized Taylor's formula, Journal of Mathematical Analysis and Applications, 231 (1999), 255--265.

\bibitem{odibat-shawagfeh}
Z. M. Odibat, N. T. Shawagfeh, Generalized Taylor's formula, Applied Mathematics and Computation, 186 (2007), 286--293.

\bibitem{Ricardo}
R. Almeida, A. B. Malinowska, M. T. T. Monteiro, Fractional differential equations with a Caputo derivative with respect to a kernel function and their applications, Mathematical Methods in the Applied Sciences, 41(1) (2018), 336--352.

\bibitem{FahadMujeeb}
H. M. Fahad, M. ur Rehman, Generalized Substantial Fractional Operators and Well-Posedness of Cauchy Problem, Bulletin of the Malaysian Mathematical Sciences Society, 44 (2021), 1501--1524.

\bibitem{Wang} J. Wang, L. Lv,  Y. Zhou, Ulam stability and data dependence for fractional differential equations with Caputo derivative, Electronic Journal of Qualitative Theory of Differential Equations, 2011(63) (2011), 1--10.
   
% \bibitem{Rahman1}  G. Rahman, K. S. Nisar, S. Rashid, T. Abdeljawad,  Certain Gr\"{u}ss-type inequalities via tempered fractional integrals concerning another function, Journal of Inequalities and Applications, 2020 (2020), 147.
 
% \bibitem{Rahman2}  G. Rahman, K. S. Nisar, T. Abdeljawad,  M. Samraiz,   Some new tempered fractional P\'{o}lya-Szeg\"{o} and Chebyshev-type inequalities with respect to another function, Journal of Mathematics, 2020 (2020), DOI: https://doi.org/10.1155/2020/9858671.
 

 


\end{thebibliography}
\end{document}